\documentclass[11pt]{amsart} 
\usepackage{latexsym}\usepackage{amsmath}\usepackage{xspace}\usepackage{enumerate}\usepackage{amsfonts}\usepackage{amscd}\usepackage{syntonly}\usepackage{amssymb}

\usepackage{hyperref}

\usepackage{bbm}

\usepackage{amssymb}
\usepackage{mathrsfs}

\usepackage{mathtools}
 
\usepackage{latexsym}  
 
\usepackage[alwaysadjust]{paralist}

\newtheorem{thm}{Theorem}
\newtheorem{lem}[thm]{Lemma}
\newtheorem{prop}[thm]{Proposition}

\newtheorem{rem} [thm]{Remark}
\newtheorem{definition}[thm]{Definition}

\newtheorem{step}{Step}

\def\ph{{\varphi}}

\def\eps{\varepsilon}

\renewcommand\phi{\varphi}

\def\d{\textrm{d}}

\def\im{\textrm{im}\,}
\def\G{\mathrm{G}}

\def\SO{\mathbf{SO}}
\def\crit{\operatorname{Crit}}

\newcommand{\N}{\mathbb{N}}
\newcommand{\Z}{\mathbb{Z}}

\newcommand{\R}{\mathbb{R}}

\def\C{\mathbb C}

\def\d{{\operatorname{d}}}
\def\reg{{\operatorname{reg}}}

\def\D{\mathbb{D}}

\newcommand\hybr{\operatorname{hybr}}

\def\ind{\operatorname{ind}}

\newcommand\supp{\operatorname{supp}}

\def\ad{\operatorname{ad}}
\def\Crit{\operatorname{Crit}}
\def\d{\operatorname{d}}

\def\dom{\operatorname{dom}}
\def\im{\operatorname{im}}

\def\supp{\operatorname{supp}}

\def\A{\mathcal A}
\def\AA{\mathbbm A}

\def\D{\mathcal D}
\def\F{\mathcal F}
\def\G{\mathcal G}

\def\J{\mathcal J}

\def\YM{\mathcal{YM}}

\def\D{\mathbb{D}}

\def\coker{\operatorname{coker}}

\def\sign{\operatorname{sign}}
\def\dvol{\operatorname{dvol}}
\def\modulo{\operatorname{mod}}

\title{Elliptic Yang--Mills Flow Theory}
\author[R.~Janner]{R\'{e}mi Janner}
\address[R.~Janner]{Alpiq Management AG\\Bahnhofquai 12\\CH-4601 Olten\\Switzerland}
\email{remi.janner@gmail.com}
\urladdr{ www.remijanner.ch}

\author[J.~Swoboda]{Jan Swoboda}
\address[J.~Swoboda]{Max-Planck-Institut f\"ur Mathematik\\Vivatsgasse 7\\D-53111 Bonn\\Germany}
\email{swoboda@mpim-bonn.mpg.de}
\urladdr{http://www.mpim-bonn.mpg.de/de/node/94}

\date{\today}

\dedicatory{Dedicated to Professor Dietmar A.~Salamon on the occasion of his 60th birthday}
\subjclass[2010]{58E15, 53C07, 35J60, 53D20}

\begin{document}
\maketitle

\begin{abstract}  
We lay the foundations of a Morse homology on the space of connections on a principal $G$-bundle over a compact manifold $Y$, based on a newly defined gauge-invariant functional $\J$. While the critical points of $\J$ correspond to Yang--Mills connections on $P$, its $L^2$-gradient gives rise to a novel system of elliptic equations. This contrasts previous approaches to a study of the Yang--Mills functional via a parabolic gradient flow. We carry out the complete analytical details of our program in the case of a compact two-dimensional base manifold $Y$. We furthermore discuss its relation to the well-developed parabolic Morse homology of Riemannian surfaces. Finally, an application of our elliptic theory is given to three-dimensional product manifolds $Y=\Sigma\times S^1$.
\end{abstract}

\tableofcontents

\section{Introduction}\label{sec:main}
During the last decades many authors, in order to understand Morse theoretical properties of the Yang--Mills functional
\begin{eqnarray*}
\YM\colon\A(P)\to\R,\qquad \YM(A)=\frac{1}{2}\int_Y\langle F_A\wedge\ast F_A\rangle
\end{eqnarray*}
on a principal $G$-bundle $P$ over a compact manifold $Y$, studied its $L^2$-gradient flow  
\begin{equation}\label{yml2f}
\partial_sA+d_A^*F_A=0.
\end{equation}
This approach was introduced by Atiyah and Bott (cf.~\cite{AB}) and used for example by Donaldson (cf.~\cite{Donaldson}) to prove a generalized version of the Narasimhan--Seshadri theorem. Analytical properties of the solutions of \eqref{yml2f} for principal bundles over $2$- and $3$-dimensional base manifolds, or over base manifolds with a symmetry of codimension $3$, were proven by R\r{a}de (cf.~\cite{Rade,Rade2}) and Davis (cf.~\cite{Davies}). A consideration common to all of these works is the following one. Since Eq.~\eqref{yml2f} is invariant under an infinite-dimensional group of symmetries, the group $\G(P)$ of gauge transformations, it is only degenerate parabolic, in contrary to e.g.~the heat flow on manifolds. This problem can be overcome by imposing a suitable gauge-fixing condition. The linearized flow equation then splits into an operator of mixed parabolic and elliptic type (cf.~\cite{Davies,Janner,Swoboda1}). Short-time existence of the thus augmented equation is shown in \cite{Rade}. Long-time existence holds below the critical dimension $n=4$, and is unknown to be case for $n=4$ (cf.~Struwe's article \cite{Struwe} for a blow-up analysis in this case). Moreover, it is not known whether the flow satisfies the Morse--Smale transversality property and it is therefore in general an open question whether a Morse homology, based on the Yang--Mills functional and its $L^2$-flow, can be defined in dimensions $n\geq3$.\\
\noindent\\
In this context we present a novel approach to Yang--Mills theory and define a new Morse homology as follows. Let $Y$ be a closed manifold of dimension $n\geq2$. Fix a Riemannian metric $g$ on $Y$ and let $\dvol_Y$ denote its volume form. We consider a principal $G$-bundle $P\to Y$, $G$ a compact Lie group with Lie algebra $\mathfrak g$, and an $\ad$-invariant inner product on $\mathfrak g$. Let $X:=\mathcal A(P)\times \Omega^{n-2}(Y,\ad(P))$ be  the product of the space of $\mathfrak g$-valued connections on $P$ and $\ad(P)$-valued $(n-2)$-forms. Then we consider the energy functional
\begin{equation*}
\mathcal J\colon X\to\R,\qquad\mathcal J(A,\omega):=\int_Y \left(\langle F_A, \ast \omega \rangle-\frac 12|\omega|^2\right)\dvol_Y.
\end{equation*}
Critical points $(A,\omega)$ of this functional satisfy the system of equations
\begin{equation*}
\ast F_A-\omega=0,\qquad d_A\omega=0
\end{equation*}
in which case $A$ is a Yang--Mills connection (i.e., satisfies $d_A^{\ast}F_A=0$) and $\ast\omega$ is its curvature. The most interesting aspect are the $L^2$-gradient flow equations
\begin{equation}\label{newflow}
\partial_sA+(-1)^{n+1}\ast d_A\omega=0,\qquad\partial_s\omega+\ast F_A-\omega=0,
\end{equation}
a system of first order nonlinear PDEs, which for $n=2$ define a nice elliptic problem. Based on the gradient flow equations \eqref{newflow}, we define a new elliptic Yang--Mills Morse homology. To make this idea precise, we need to work with a perturbed version $\J+h_f$ of the functional $\J$. The reason for adding a perturbation $h_f$ is twofold. First, it is in general not the case that critical points of the unperturbed functional $\J$ are nondegenerate in the Morse--Bott sense. By this we mean that the nullspace of the Hessian $H_{(A,\omega)}\J$ of $\J$ at each critical point $(A,\omega)$ consists entirely of infinitesimal gauge transformations. The second reason is that a priori it is not clear whether gradient flow lines of $\J$, connecting two critical points, satisfy Morse--Smale transversality. This condition, which is equivalent to the linearized gradient flow equation at such a flow line being surjective, is required to obtain smooth moduli spaces of solutions. We here work with holonomy perturbations (cf.~\textsection \ref{subsec:holonomypert} for details) the construction of whose is based on work by Salamon and Wehrheim in \cite{SalWeh}. The precise form of the $L^2$-gradient flow equations resulting from the perturbed functional $\J+h_f$ will be stated in \eqref{pertEYM1} below.\\
\noindent\\
The plan of the article is as follows. In \textsection \ref{sec:equations} we introduce the elliptic Yang--Mills equations. We then move on to establish their main analytical properties as far as they are needed to define elliptic Yang--Mills homology in the case $n=2$ (which we introduce in Section \ref{sec:homology}). We in particular show (cf.~Section \ref{sect:modulispFredholm}) that the linearization of the gradient flow equations give rise to a Fredholm operator and determine its index. The main difficulty one encounters here is that the functional $\J$ is neither bounded from below nor from above so that the number of eigenvalue crossings of the resulting spectral flow cannot be read off from the index of the Hessian at limiting critical points of $\J$. To overcome this problem we relate this spectral flow to that of a further family of elliptic operators, the numbers of negative eigenvalues of their limits as $s\to\pm\infty$ being finite and equal to the index of the limiting Yang--Mills connections. A further result concerns compactness up to gauge transformations and convergence to broken trajectories of the moduli spaces of solutions to \eqref{pertEYM1} which have uniformly bounded energy, cf.~\textsection \ref{sect:compactness}. Exponential decay of finite energy solutions of \eqref{pertEYM1} towards critical points is shown in \textsection \ref{sect:exponentialdecay}. Transversality is being dealt with in \textsection \ref{sect:transversality}. A Morse boundary operator defined by counting the numbers of elements in suitable moduli spaces of connecting flow lines is introduced in \textsection \ref{sec:homology}. Together with the critical points of $\J+h_f$ in a fixed sublevel set it gives rise to a chain complex, the homology of which we name {\bf{elliptic Yang--Mills Morse homology}}.\\
\noindent\\
In \textsection \ref{sect:RelshipwithparabolicYM} we initiate a comparison of elliptic Yang--Mills Morse homology with its classical parabolic counterpart (cf.~\cite{AB,Swoboda1}). Although these theories are based on very different types of equations, they share a number of common properties. Namely, the set of generators of the relevant chain complexes is the same in both cases, and also the dimensions of moduli spaces of connecting flow lines agree in each case. Therefore it seems natural to conjecture that there exists an isomorphism between elliptic and parabolic Morse homology. We give some evidence to this conjecture by constructing a so-called hybrid moduli space in which we combine elliptic and parabolic flow lines. Leaving the analytical details of that construction to a future publication we anticipate, based on a crucial energy inequality, that arguments similar to those in \cite{AbSchwarz,AbSchwarz1,Swoboda2} apply and give rise to an invertible chain homomorphism between the parabolic and the elliptic Morse complex. \\
\noindent\\
As an application of the elliptic Yang--Mills homology presented here we consider in \textsection \ref{sec:special case} three-dimensional product manifolds $Y=\Sigma\times S^1$. We relate the new invariants obtained in this case to various other homology groups, amongst them Floer homology of the cotangent bundle of the space of gauge equivalence classes of flat $\SO(3)$ connections over $\Sigma$.\\
\noindent\\
One difficulty in extending elliptic Morse homology to manifolds $Y$ of dimension $n\geq3$ consists in the fact that the linearization of equation \eqref{newflow} ceases to be elliptic, even if appropriate gauge-fixing conditions are imposed. To overcome this problem we introduce in \textsection \ref{sec:restrflow} a modification of the above setup. The main idea is to restrict the configuration space $X=\mathcal A(P)\times \Omega^{n-2}(Y,\ad(P))$ to the Banach submanifold 
\begin{eqnarray*}
X_1:=\{(A,\omega)\in X\mid d_A^{\ast}\omega=0\}
\end{eqnarray*}
of $X$ and consider the flow \eqref{newflow} on $X_1$ instead. This modification is natural because the critical points of $J$ are automatically contained in $X_1$. As it turns out, the linearization obtained through this modification are in fact elliptic equations, however of nonlocal type. A discussion of their compactness and transversality properties is left to future work.

\subsection*{Acknowledgements} The second named author gratefully acknowledges DFG for its financial support through grant SW 161/1-1. He also thanks the Department of Mathematics of Stanford University for its hospitality where part of this work has been carried out.

\section{Elliptic Yang--Mills flow equations}\label{sec:equations}

Let $Y$ be a closed oriented smooth manifold of dimension $n\geq2$, endowed with a Riemannian metric $g$. We keep the notation as introduced before. For pairs $(A,\omega)\in X=\A(P)\times\Omega^{n-2}(Y,\ad(P))$ we consider the functional
\begin{eqnarray*}
\J\colon X\to\R,\qquad\J(A,\omega)=\int_Y\left(\langle F_A,\ast\omega\rangle-\frac{1}{2}|\omega|^2\right)\,\dvol_Y.
\end{eqnarray*}
For transversality reasons apparent later we add a so-called holonomy perturbation $h_f$ to $\J$ which we shall define in \textsection \ref{subsec:holonomypert}. The $L^2$-gradient of $\J+h_f$ is given by
\begin{eqnarray*}
\nabla\J(A,\omega)=\big((-1)^{n+1}\ast d_A\omega-X_f(A,\omega),\ast F_A-\omega+Y_f(A,\omega)\big).
\end{eqnarray*}
For a definition of the term $(X_f(A,\omega),Y_f(A,\omega))$ which results as the $L^2$-gradient of $h_f$, cf.~\eqref{eq:gradienthf} below. We define the {\bf{perturbed elliptic Yang--Mills flow}} to be the system of equations
\begin{eqnarray}\label{pertEYM1}
\begin{cases}
0=\partial_sA-d_A\Psi+(-1)^{n+1}\ast d_A\omega-X_f(A,\omega),\\
0=\partial_s\omega+[\Psi,\omega]-\omega+\ast  F_A+Y_f(A,\omega).
\end{cases} 
\end{eqnarray}
for a connection $A\in\A(P)$ and $\ad(P)$-valued forms $\Psi\in\Omega^0(Y,\ad(P))$ and $\omega\in\Omega^{n-2}(Y,\ad(P))$. The term $\Psi$ has been introduced in \eqref{pertEYM1} in order to make this system of equations invariant under the action of time-dependent gauge transformations. Stationary points of the unperturbed flow equation (referring to the unperturbed functional $\J$) such that $\Psi=0$ satisfy 
\begin{eqnarray}\label{statEYM}
d_A\omega=0\qquad\textrm{and}\qquad\omega=\ast F_A.
\end{eqnarray}
Then $d_A\omega=d_A\ast F_A=0$ and hence the set of critical points of $\J$ is in bijection with the set of Yang--Mills connections on the bundle $P$.

\begin{rem}\upshape
The factor $(-1)^{n+1}$ appearing in the first equation in \eqref{pertEYM1} results from the different signs the formal adjoint $d_A^{\ast}=(-1)^{n(\deg\omega+1)+1}\ast d_A\ast\omega$ of $d_A$ has in different degrees and dimensions.
\end{rem}

\begin{rem}\upshape
Assume $(A,\omega)$ is a solution of the unperturbed equation \eqref{pertEYM1} on $I\times Y$, $I$ a bounded or unbounded interval. Assume in addition that $\Psi=0$. Then it follows that $\omega$ satisfies the linear PDE
\begin{eqnarray}\label{eq:omegasecorder}
0=\ddot\omega-\dot\omega-d_A^{\ast}d_A\omega.
\end{eqnarray} 
In the case  $n=2$ this is an elliptic equation with the Hodge Laplacian $-\frac{d^2}{ds^2}+\Delta_A$ acting on $\Omega^0(I\times Y,\ad(P))$ as leading order term. 
\end{rem}

\section{Moduli spaces and Fredholm theory}\label{sect:modulispFredholm}

From now on we specialize to the case $n=2$ and let $Y=\Sigma$ denote a closed oriented surface, being endowed with a Riemannian metric $g$.

\subsection{Gradient flow lines and moduli spaces}

Throughout we call a smooth solution $(A,\omega,\Psi)$ of \eqref{pertEYM1} on $\R\times\Sigma$ a {\bf{negative gradient flow line}} of $\mathcal J$. We say that $(A,\omega,\Psi)$ is in {\bf{temporal gauge}} if $\Psi=0$. The {\bf{energy}} of a solution $(A,\omega,\Psi)$ of \eqref{pertEYM1} is  
\begin{multline*}
E_f(A,\omega,\Psi):=\\
\frac{1}{2}\int_{\R}\|\ast d_A\omega+d_A\Psi+X_f(A,\omega)\|_{L^2(\Sigma)}^2+\|\ast F_A-\omega+[\Psi\wedge\omega]+Y_f(A,\omega)\|_{L^2(\Sigma)}^2\,ds.
\end{multline*}
It is by definition invariant under time-dependent gauge transformations. In \textsection \ref{sect:exponentialdecay} it will be shown that a temporally gauged solution $(A,\omega,0)$ of \eqref{pertEYM1}  on $\R\times\Sigma$ has finite energy if and only if there exist critical points $(A^{\pm},\omega^{\pm})$ of $\mathcal J+h_f$ such that $(A(s),\omega(s))$ converges exponentially to $(A^{\pm},\omega^{\pm})$ as $s\to\pm\infty$. We let $\mathcal M_f(A^-,\omega^-,A^+,\omega^+)$ denote the moduli space of gauge equivalence classes of negative gradient flow lines from $(A^-,\omega^-)$ to $(A^+,\omega^+)$, i.e.~
\begin{eqnarray*}
\mathcal M_f(A^-,\omega^-,A^+,\omega^+):=\frac{\widehat{\mathcal M}_f(A^-,\omega^-,A^+,\omega^+)}{\G(P)},
\end{eqnarray*}
where
\begin{eqnarray*}
\widehat{\mathcal M}_f(A^-,\omega^-,A^+,\omega^+):=\left\{(A,\omega)\left|\begin{array}{c}(A,\omega,0)\;\textrm{satisfies}\;\eqref{pertEYM1} ,E_f(A,\omega,0)<\infty,\\\lim_{s\to\pm\infty}(A(s),\omega(s))\in[(A^{\pm},\omega^{\pm})]\end{array}\right.\right\}.
\end{eqnarray*}
The moduli space $\mathcal M_f(A^-,\omega^-,A^+,\omega^+)$ can be described in a slightly different way as the quotient of all finite energy gradient flow lines $(A,\omega,\Psi)$ from $(A^-,\omega^-)$ to $(A^+,\omega^+)$ such that $\Psi(s)\to0$ as $s\to\pm\infty$ modulo the action of the group $\mathcal G(\R\times\Sigma)$ of smooth time-dependent gauge transformations, which converge exponentially to the identity as $s\to\pm\infty$. One of our goals in the subsequent sections is to show that the moduli space $\mathcal M_f(A^-,\omega^-,A^+,\omega^+)$ is a compact manifold with boundary and to determine its dimension. We start by considering first the linearized operator for Eq.~\eqref{pertEYM1}.

\subsection{Linearized operator}

Linearizing the map
\begin{eqnarray*}
\F\colon(A,\omega,\Psi)\mapsto\left(\begin{array}{c}\partial_sA-d_A\Psi-\ast d_A\omega-X_f(A,\omega)\\\partial_s\omega+[\Psi,\omega]-\omega+\ast  F_A+Y_f(A,\omega)\end{array}\right)
\end{eqnarray*}
at $(A,\omega,\Psi)$ yields the linear operator 
\begin{multline}\label{linoperator}
\mathcal D_{(A,\omega,\Psi)}\colon(\alpha,v,\psi)\mapsto\\
\nabla_s\left(\begin{array}{c}\alpha\\v\\\psi\end{array}\right)+\underbrace{\left(\begin{array}{ccc}\ast[\omega\wedge\,\cdot\,]&-\ast d_A&-d_A\\\ast d_A&-\mathbbm 1&-[\omega\wedge\,\cdot\,]\\-d_A^{\ast}&[\omega\wedge\,\cdot\,]&0\end{array}\right)}_{=:B_{(A,\omega,\Psi)}}\left(\begin{array}{c}\alpha\\v\\\psi\end{array}\right)+\begin{pmatrix}-\d\!X_f(A,\omega)(\alpha,v)\\\d\!Y_f(A,\omega)(\alpha,v)\\0\end{pmatrix},
\end{multline}
where $\nabla_s:= \frac\partial{\partial_s}+[\Psi \wedge \,\cdot\,]$. To specify its domain and target we define for $p>1$
\begin{eqnarray*}
\mathcal L^p:=L^p(\R,L^p(\Sigma))\qquad\textrm{and}\qquad\mathcal W^p:=W^{1,p}(\R,L^p(\Sigma))\cap L^p(\R,W^{1,p}(\Sigma)),
\end{eqnarray*}
where for abbreviation we set
\begin{multline*}
L^p(\R,L^p(\Sigma)):=L^p(\R,L^p(\Sigma,T^{\ast}\Sigma\otimes\ad(P)))\\
\oplus L^p(\R,L^p(\Sigma,\ad(P)))\oplus L^p(\R,L^p(\Sigma,\ad(P))),
\end{multline*}
and similarly for $L^p(\R,W^{1,p}(\Sigma))$ and $W^{1,p}(\R,L^p(\Sigma))$. Throughout we shall consider $\mathcal D_{(A,\omega,\Psi)}$ as an operator $\mathcal D_{(A,\omega,\Psi)}\colon\mathcal W^p\to\mathcal L^p$.

\begin{rem}[Gauge-fixing condition]\label{rem:gaugefix}\upshape
The last component in the definition \eqref{linoperator} of the operator $\mathcal D_{(A,\omega,\Psi)}$ is a gauge-fixing condition. It can be understood as follows. We identify the pair $(A,\Psi)$ with the connection $A+\Psi\,ds$ on the principal $G$-bundle $\R\times P$. The group $\G(\R\times P)$ of time-dependent gauge transformations acts on pairs $(A+\Psi\,ds,\omega)$ as
\begin{eqnarray*}
g^{\ast}(A+\Psi\,ds,\omega)=(g^{\ast}A+(g^{-1}\Psi g+g^{-1}\dot g)\wedge ds,g^{-1}\omega g).
\end{eqnarray*}
The infinitesimal action at $(A+\Psi\,ds,\omega)$ is the map
\begin{eqnarray*}
\ph\mapsto(d_A\ph+(\dot\ph+[\Psi\wedge\ph])\wedge ds,[\omega\wedge\ph]).
\end{eqnarray*}
A short calculation now shows that $(\alpha+\psi\wedge ds,v)$ is orthogonal to the image of the infinitesimal action precisely if the last component of $\mathcal D_{(A,\omega,\Psi)}(\alpha,v,\psi)$ vanishes.
\end{rem}

\subsection{Fredholm theorem} 
Throughout this section we fix $a>0$ and an $a$-regular perturbation $h_f$. The aim of this section is to prove the following theorem.

\begin{thm}\label{thm:Fredholmthm}
Let $(A,\omega,\Psi)$ be a solution of $\eqref{pertEYM1}$ and assume that there exist solutions $(A^{\pm},\omega^{\pm})$ of the perturbed critical point equation \eqref{eq:pertcritpointeq} such that
\begin{eqnarray*}
\lim_{s\to\pm\infty}(A(s),\omega(s),\Psi(s))=(A^{\pm},\omega^{\pm},0)
\end{eqnarray*}
in $C^k(\Sigma)$ for every $k\in\N_0$. Then for every $1<p<\infty$ the linear operator $\mathcal D_{(A,\omega,\Psi)}\colon\mathcal W^p\to\mathcal L^p$ is a Fredholm operator of index
\begin{eqnarray}\label{eq:Fredholmindex}
\ind\mathcal D_{(A,\omega,\Psi)}=\ind H_{A^-,f}-\ind H_{A^+,f}.
\end{eqnarray}
Here we denote by $\ind H_{A,f}$ the Morse index (i.e.~the number of negative eigenvalues) of the perturbed Yang--Mills Hessian 
\begin{eqnarray*}
H_{A,f}\coloneqq d_A^{\ast}d_A+\ast[\ast F_A\wedge\,\cdot\,]+\d\!X_f(A).
\end{eqnarray*}
\end{thm}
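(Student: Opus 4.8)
The plan is to establish the Fredholm property by the standard two-step strategy for gradient-flow operators on cylinders: (i) show that the constant-coefficient operators $\mathcal D_{(A^\pm,\omega^\pm,0)}$ obtained in the limits $s\to\pm\infty$ are invertible as operators $\mathcal W^p\to\mathcal L^p$, and (ii) conclude that $\mathcal D_{(A,\omega,\Psi)}$ is Fredholm because it is a compact (more precisely, relatively compact lower-order) perturbation of an operator that asymptotically agrees with those invertible limits. For step (i), the key point is that on the slice $\R\times\Sigma$ the operator $B_{(A^\pm,\omega^\pm,0)}$ is a \emph{self-adjoint} elliptic operator on $\Sigma$ (one checks from the matrix in \eqref{linoperator}, using the gauge-fixing row $-d_A^*$, that $B$ is symmetric with respect to the $L^2$ inner product), and at a critical point its kernel consists precisely of infinitesimal gauge transformations — which the gauge-fixing condition kills — so by $a$-regularity of $h_f$ the limiting operators $\partial_s + B_{(A^\pm,\omega^\pm,0)}$ have no kernel or cokernel on $\mathcal W^p$. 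This is where one invokes the abstract result (a version of the Atiyah–Patodi–Singer / Robbin–Salamon theorem, e.g.\ as in Salamon's lectures or Donaldson's book) that $\partial_s + B(s)$ with $B(s)$ a smooth family of self-adjoint elliptic operators converging to invertible limits $B^\pm$ is Fredholm on $L^p$-spaces with index equal to the spectral flow of $\{B(s)\}$.

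Once the Fredholm property is in hand, the index identity \eqref{eq:Fredholmindex} reduces to computing a spectral flow. The naive hope — that the spectral flow of $\{B_{(A(s),\omega(s),0)}\}$ equals $\operatorname{ind} H_{A^-,f} - \operatorname{ind} H_{A^+,f}$ via the difference of Morse indices at the endpoints — fails directly, as the introduction warns, because $\J$ is unbounded in both directions, so the operators $B_{(A^\pm,\omega^\pm,0)}$ have infinitely many negative eigenvalues and their "index" is not even defined. The device I would use is the one hinted at in the paper: deform $B_{(A,\omega,0)}$ through a homotopy of self-adjoint elliptic operators (keeping the endpoints invertible, so the total spectral flow is unchanged by homotopy invariance) to a block-diagonal or block-triangular operator in which the $\omega$-variable decouples, contributing zero net spectral flow because its symbol $-\mathbbm 1$ is definite, leaving only the operator acting on the $(\alpha,\psi)$ variables. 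That residual operator should be gauge-equivalent to $\partial_s$ plus the augmented Yang–Mills Hessian $H_{A,f} = d_A^* d_A + \ast[\ast F_A \wedge \cdot\,] + \mathrm{d}X_f(A)$ (together with the gauge-fixing Laplacian $d_A d_A^*$ on the $\psi$-slot, which again contributes no spectral flow, being positive), whose limits \emph{do} have finite Morse index equal to $\operatorname{ind} H_{A^\pm,f}$. Then the APS index theorem gives the spectral flow as the difference $\operatorname{ind} H_{A^-,f} - \operatorname{ind} H_{A^+,f}$.

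I would organize the write-up as: first a lemma identifying $B_{(A,\omega,\Psi)}$ as (conjugate to) a self-adjoint elliptic operator and verifying that at critical points its kernel is exactly $\operatorname{im}(d_A)\oplus 0$ inside the gauge-fixed slice, hence invertible after imposing the gauge condition, under $a$-regularity; then the abstract Fredholm-and-spectral-flow statement applied to $\{B(s)\}$; then the homotopy argument reducing the spectral flow to that of the Yang–Mills Hessian family; and finally the identification of the latter spectral flow with the difference of Morse indices, together with the $L^p$-elliptic-estimate bootstrapping needed to pass from $p=2$ to general $1<p<\infty$ (using that all lower-order terms are relatively compact so the index is $p$-independent). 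The main obstacle I expect is the homotopy/decoupling step: one must exhibit an explicit path of operators that keeps \emph{both} endpoints invertible throughout — so that homotopy invariance of the spectral flow applies — while straightening the off-diagonal couplings $\pm\ast d_A$ and $[\omega\wedge\cdot\,]$ out of the way; verifying non-degeneracy of the endpoints all along this path (rather than just at its ends) is the delicate technical heart of the argument, and is presumably where $a$-regularity of the perturbation and the precise structure of \eqref{statEYM} get used most heavily.
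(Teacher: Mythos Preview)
Your overall architecture is right and matches the paper's: gauge away $\Psi$, verify the Robbin--Salamon hypotheses for the self-adjoint family $s\mapsto B_f(s)$ to get Fredholmness with index equal to the spectral flow, and then bootstrap from $p=2$ to general $p$ via exponential decay and elliptic estimates. The gap is in your spectral-flow computation.

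Your proposed homotopy ``decouple the $\omega$-variable by killing the off-diagonal $\pm\ast d_A$ terms'' does not land on the Yang--Mills Hessian. After such a decoupling the residual operator on $(\alpha,\psi)$ is the \emph{first-order} block
\[
\begin{pmatrix} \ast[\omega\wedge\,\cdot\,]+\d X_f & -d_A\\ -d_A^{\ast} & 0\end{pmatrix},
\]
not the second-order operator $H_{A,f}=d_A^{\ast}d_A+\ast[\ast F_A\wedge\,\cdot\,]+\d X_f$. There is no reason the spectral flow of this first-order family should equal the difference of Morse indices of $H_{A^\pm,f}$; indeed the endpoint operators here still have infinitely many negative eigenvalues, so you have not escaped the difficulty you correctly identified. (There is also no guarantee your homotopy keeps the endpoints invertible, but that is secondary.)

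The paper's device is different and worth internalizing: rather than homotope $B$, one \emph{eliminates} $v$ algebraically from the eigenvalue equation $B_{f,0}(s)\xi=\lambda\xi$. The middle row gives $v=\frac{1}{\lambda+1}(\ast d_A\alpha-[\omega\wedge\psi])$ for $\lambda\neq -1$, and substituting back yields a $\lambda$-dependent family $C_{f,\lambda}(s)$ acting on $(\alpha,\psi)$ alone, whose leading term is $\frac{1}{\lambda+1}d_A^{\ast}d_A$ --- now genuinely second order. One then shows, crossing by crossing, that the signatures of $\Gamma(B_{f,0},s_0)$ and $\Gamma(C_f,s_0)$ agree (where $C_f:=C_{f,0}$), by differentiating both eigenvalue problems at a crossing and comparing; the upshot is $\dot\mu=(1+\textrm{nonneg.})\,\dot\lambda$. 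Finally, only at the \emph{limits} $s\to\pm\infty$ (where $\omega^\pm=\ast F_{A^\pm}$) does one homotope: a path $C_{\tau,f}^{\pm}$ from $C_f^{\pm}$ to the gauge-fixed Yang--Mills Hessian is shown to have no zero eigenvalue for any $\tau$, so the finite negative-eigenvalue count of $C_f^{\pm}$ equals $\operatorname{ind}H_{A^{\pm},f}$. The substitution $v\mapsto\frac{1}{\lambda+1}(\cdots)$ is the missing idea that converts the first-order system into something whose endpoint Morse indices are finite and computable.
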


Because the statement of this theorem is invariant under gauge transformations we may from now on assume that $\Psi=0$. For ease of notation we also define $B_f\coloneqq B_{(A,\omega,0)}+(-\d\!X_f(A,\omega),\d\!Y_f(A,\omega))$.

\subsubsection*{Fredholm property and index in the case $p=2$}

We let $p=2$. Then $B_f(s)$ is a self-adjoint operator on the Hilbert space $H:=L^2(\Sigma)$ with domain $W\coloneqq\dom B_f(s)=W^{1,2}(\Sigma)$, for every $s\in\R$. This follows by the same arguments as given in the proof of Proposition \ref{prop:ellipticsystem}.

\begin{proof}{\bf{[Theorem \ref{thm:Fredholmthm} in the case $p=2$].}}
To show the Fredholm property we employ the result \cite[Theorem A]{RobSal}. To apply this theorem we need to check that the following conditions (i-iv) are satisfied. (i) The inclusion $W\hookrightarrow H$ of Hilbert spaces is compact with dense range. This holds true by the Rellich--Kontrachov compactness theorem. (ii) The norm of $W$ is equivalent to the graph norm of $B_f(s)\colon W\to H$ for every $s\in\R$. This follows from a standard elliptic estimate for the operator $B_f(s)\colon W\to H$. (iii) The map $\R\to\mathcal L(W,H)\colon s\mapsto B_f(s)$ is continuously differentiable with respect to the weak operator topology. For this we need to verify that for every $\xi\in W$ and $\eta\in H$ the map $s\mapsto\langle B_f(s)\xi,\eta\rangle$ is of class $C^1(\R)$. Because $s\mapsto(A(s),\omega(s))$ is a smooth path in $X$ and $B_f$ depends smoothly on $(A,\omega)$, this property is clearly satisfied. (iv) The operators $B_{(A^{\pm},\omega^{\pm},0)}-(\d\!X_f(A^{\pm}),0)$ are invertible and are the limits of $B_f(s)$ in the norm topology as $s\to\pm\infty$. Invertibility follows because the perturbation $h_f$ was assumed to be $a$-regular. Theorem \ref{thm:mainexpconvergence} (exponential decay) gives uniform convergence with all derivatives of $(A(s),\omega(s))$ to $(A^{\pm},\omega^{\pm})$ as $s\to\pm\infty$, hence in particular norm convergence $B_f(s)\to B_{(A^{\pm},\omega^{\pm},0)}-(\d\!X_f(A^{\pm}),0)$ as $s\to\pm\infty$. We have thus verified that the operator family $s\mapsto B_f(s)$ satisfies all the assumptions of \cite[Theorem A]{RobSal}. It hence follows that the operator $\mathcal D_{(A,\omega,0)}=\frac{d}{ds}+B_f(s)$ is Fredholm with index given by the spectral flow of the family $s\mapsto B_f(s)$, cf.~below. It is shown in Proposition \ref{prop:interpolatingfamily} and Lemma \ref{lem:eqcrosssign} below that this spectral flow equals that of a further operator family $s\mapsto C_f(s)$. Its spectral flow in turn is equal to the right-hand side of \eqref{eq:Fredholmindex}, which is the content of the subsequent Lemma \ref{lem:crossingformula}. The index formula \eqref{eq:Fredholmindex} now follows.
\end{proof}

It remains to determine the index of the Fredholm operator $\mathcal D_{(A,\omega,0)}$. This index is related to the spectral flow of the operator family $s\mapsto B_f(s)$ as we explain next. Here we follow the discussion in \cite[\textsection 4]{RobSal}. Recall that a {\bf{crossing}} of $B_f$ is a number $s\in\R$ for which $B_f(s)$ is not injective. The {\bf{crossing operator}} at $s\in\R$ is the map
\begin{eqnarray*}
\Gamma(B_f,s)\colon\ker B_f(s)\to\ker B_f(s),\qquad\Gamma(B_f,s)=P(s)\dot B_f(s),
\end{eqnarray*}
where $P(s)\colon H\to H$ denotes the orthogonal projection onto $\ker B_f(s)$. A crossing $s\in\R$ is called {\bf{regular}} if $\Gamma(B_f(s))$ is nonsingular. It can be shown (by adapting the arguments in \cite[Theorem 4.2]{RobSal}) that, after replacing $f$ by a suitable, $a$-regular perturbation $\hat f$ (which may be chosen arbitrarily close to $f$), the operator family $s\mapsto B_{\hat f}(s)$ only has regular crossings. We assume that $f$ has been chosen accordingly. Then the number of crossings of $B_f$ is in particular finite. The {\bf{signature}} of the crossing $s\in\R$ is the signature (the number of positive minus the number of negative eigenvalues) of the endomorphism $\Gamma(B_f,s)$. The Fredholm index of $\mathcal D_{(A,\omega,0)}$ is determined by the crossing signatures via the relation
\begin{eqnarray}\label{eq:formspectralflow}
\ind\mathcal D_{(A,\omega,0)}=-\sum_s\sign\Gamma(B_f,s)
\end{eqnarray}
where the sum is over all crossings, cf.~\cite[Theorem 4.1]{RobSal}.\\
\noindent\\
We now aim to determine the Fredholm index of $\mathcal D_{(A,\omega,0)}$ from \eqref{eq:formspectralflow}. The following proposition simplifies the subsequent computations.

\begin{prop}\label{prop:interpolatingfamily}
Let $B_{f,0}$ denote the family of operators which we obtain from $B_f$ by setting $Y=0$. Then the operator $\frac{d}{ds}+B_{f,0}\colon\mathcal W^2\to\mathcal L^2$ is a Fredholm operator with index equal to that of $\mathcal D_{(A,\omega,0)}$.
\end{prop}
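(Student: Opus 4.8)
The plan is to exhibit a homotopy through Fredholm operators connecting $\mathcal D_{(A,\omega,0)}=\frac{d}{ds}+B_f$ to $\frac{d}{ds}+B_{f,0}$ and invoke the homotopy invariance of the Fredholm index. Concretely, for $t\in[0,1]$ set
\begin{eqnarray*}
B_f^t(s)\coloneqq\left(\begin{array}{ccc}\ast[\omega\wedge\,\cdot\,]&-\ast d_A&-d_A\\\ast d_A&-\mathbbm 1&-t[\omega\wedge\,\cdot\,]\\-d_A^{\ast}&t[\omega\wedge\,\cdot\,]&0\end{array}\right)+\begin{pmatrix}-\d\!X_f(A,\omega)&0\\\d\!Y_f(A,\omega)&0\\0&0\end{pmatrix},
\end{eqnarray*}
so that $B_f^1=B_f$ (up to the perturbation bookkeeping already absorbed in $B_f$) and $B_f^0=B_{f,0}$. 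Since scaling the off-diagonal zeroth-order term $[\omega\wedge\,\cdot\,]$ by $t$ is a bounded perturbation that does not affect the leading-order symbol, each $B_f^t(s)$ is still self-adjoint on $H=L^2(\Sigma)$ with domain $W=W^{1,2}(\Sigma)$, and the elliptic estimate of Proposition \ref{prop:ellipticsystem} holds uniformly in $t$. The first step is therefore to verify that for every $t$ the family $s\mapsto B_f^t(s)$ satisfies hypotheses (i)--(iv) of \cite[Theorem A]{RobSal}; conditions (i)--(iii) are literally unchanged, and for (iv) one checks that the limit operators $B_{f}^t(\pm\infty)$ remain invertible.

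The key point — and the main obstacle — is precisely condition (iv) at $t\neq 1$: we must show that the limiting operators obtained from $B_{(A^{\pm},\omega^{\pm},0)}$ by scaling the relevant term by $t$ stay invertible, even though $a$-regularity of $h_f$ only gives invertibility at $t=1$. Here one uses that at a critical point $(A^{\pm},\omega^{\pm})$ we have $\omega^{\pm}=\ast F_{A^{\pm}}$ and $d_{A^{\pm}}\omega^{\pm}=0$, which makes the zeroth-order block structure rigid enough that the kernel of the $t$-scaled operator can be computed directly. I would argue that $(\alpha,v,\psi)\in\ker B_f^t(\pm\infty)$ forces, via the gauge-fixing row $-d_A^{\ast}\alpha+t[\omega\wedge v]=0$ together with the first two rows, a reduction to the kernel of the perturbed Yang--Mills Hessian $H_{A^{\pm},f}$ acting on a suitable subspace; since $a$-regularity makes $H_{A^{\pm},f}$ invertible on that subspace, the kernel is trivial for all $t\in[0,1]$. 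An alternative, cleaner route is to avoid touching the limits altogether: interpolate only in a way that keeps the $s\to\pm\infty$ limits fixed by cutting off the homotopy parameter, i.e.\ replace $t$ by $t\cdot\beta(s)$ for a compactly supported $\beta$ equal to $1$ on a large interval — but then one must separately argue that the homotopy stays within Fredholm operators, which again comes down to an a priori estimate, so the first approach is preferable.

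Granting invertibility of the limits, each $\frac{d}{ds}+B_f^t$ is Fredholm by \cite[Theorem A]{RobSal}, and the map $t\mapsto \frac{d}{ds}+B_f^t$ is norm-continuous from $[0,1]$ into the bounded operators $\mathcal W^2\to\mathcal L^2$ because the coefficient $t[\omega\wedge\,\cdot\,]$ depends continuously (indeed linearly) on $t$ and $\omega$ is fixed and bounded with all derivatives on $\R\times\Sigma$. Homotopy invariance of the index then gives
\begin{eqnarray*}
\ind\Big(\frac{d}{ds}+B_{f,0}\Big)=\ind\Big(\frac{d}{ds}+B_f^0\Big)=\ind\Big(\frac{d}{ds}+B_f^1\Big)=\ind\mathcal D_{(A,\omega,0)},
\end{eqnarray*}
which is the assertion. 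I expect the continuity and the Fredholm checks (i)--(iii) to be entirely routine given Proposition \ref{prop:ellipticsystem}, with the only real content being the invertibility of the deformed limit operators discussed above.
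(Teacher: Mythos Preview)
You have misread what $B_{f,0}$ is. In the paper, ``setting $Y=0$'' means dropping the $\d\!Y_f$-contribution from the perturbation part of
\[
B_f=B_{(A,\omega,0)}+\big(-\d\!X_f(A,\omega),\,\d\!Y_f(A,\omega),\,0\big),
\]
not zeroing out the zeroth-order $[\omega\wedge\,\cdot\,]$ entries in the unperturbed Hessian matrix $B_{(A,\omega,0)}$. Your family $B_f^t$ scales the $(2,3)$ and $(3,2)$ blocks of $B_{(A,\omega,0)}$ while keeping $\d\!Y_f$ intact, so $B_f^0$ still contains $\d\!Y_f$ and is \emph{not} the operator $B_{f,0}$ of the proposition. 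The homotopy you wrote therefore does not connect the two operators in the statement.

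Once the correct interpolation
\[
\tau\longmapsto \frac{d}{ds}+B_{(A,\omega,0)}+\big(-\d\!X_f(A,\omega),\,\tau\,\d\!Y_f(A,\omega),\,0\big),\qquad \tau\in[0,1],
\]
is used, the obstacle you single out disappears: by construction of the $a$-regular perturbation (cf.\ Theorems \ref{thm:firsttransvers} and \ref{thm:secondtransvers}, in particular \eqref{eq:condperturbation2}), the component $Y_f$---and hence $\d\!Y_f$---vanishes along the trajectory for all sufficiently large $|s|$. Thus the limiting operators at $s\to\pm\infty$ are \emph{independent of $\tau$} and equal to the invertible operator $B_{(A^{\pm},\omega^{\pm},0)}-(\d\!X_f(A^{\pm}),0,0)$ already known from the $p=2$ case of Theorem \ref{thm:Fredholmthm}. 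No separate analysis of ``deformed limit operators'' is required, and the index equality follows immediately from the continuity of the Fredholm index along the family.
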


\begin{proof}
For $0\leq\tau\leq1$ consider the family of operators $\frac{d}{ds}+B_{(A,\omega,0)}+(-\d\!X_f(A,\omega),\tau\d\!Y_f(A,\omega))$. Since $Y_f(A(s),\omega(s))=0$ for sufficiently large $|s|\geq s_0$ (which holds by our choice of the $a$-regular perturbation $f$) it follows that for each $\tau$ the limiting operator as $s\to\pm\infty$ of $B_{(A,\omega,0)}+(-\d\!X_f(A,\omega),\tau\d\!Y_f(A,\omega))$ is equal to the invertible operator $B_{(A^{\pm},\omega^{\pm},0)}-(\d\!X_f(A^{\pm},0)$. By the same proof as of Theorem \ref{thm:Fredholmthm} in the case $p=2$ this implies that each of the operators in the  above family is Fredholm. Because this family is an interpolating family of Fredholm operators between $\frac{d}{ds}+B_{f,0}$ and $\mathcal D_{(A,\omega,0)}$ equality of the indices follows.
\end{proof}

To determine the Fredholm index of $\mathcal D_{(A,\omega,0)}$ we use Proposition \ref{prop:interpolatingfamily} together with relation \eqref{eq:formspectralflow} (with the operator $B_f$ replaced by $B_{f,0}$). The crossing indices of the operator family $B_{f,0}$ are found by relating them to those of a further path of operators $C_f(s)$ which we introduce next. For $\lambda\in\R\setminus\{-1\}$ and $(A,\omega)=(A(s),\omega(s))$ we define the symmetric operator $C_{f,\lambda}(s)$ by
\begin{eqnarray*}
C_{f,\lambda}(s):=\left(\begin{array}{cc}\frac{1}{\lambda+1}d_A^{\ast}d_A+\ast[\omega\wedge\,\cdot\,]+\d\!X_f(A,\omega)&-d_A+\frac{1}{\lambda+1}\ast d_A[\omega\wedge\,\cdot\,]\\
-d_A^{\ast}+\frac{1}{\lambda+1}\ast[\omega\wedge d_A\,\cdot\,]&-\frac{1}{\lambda+1}\ast[\omega\wedge\ast[\omega\wedge\,\cdot\,]]\end{array}\right).
\end{eqnarray*}
We furthermore set $C_f(s):=C_{f,0}(s)$.

\begin{lem}\label{lem:eqcrosssign}
For every $s\in\R$ the crossing indices $\Gamma(B_{f,0}(s))$ and $\Gamma(C_f(s))$ coincide.
\end{lem}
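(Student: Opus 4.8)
The plan is to eliminate the middle "$v$''-slot from $B_{f,0}(s)$ by a Schur-complement reduction and to recognise the reduced operator as $C_f(s)$. Fix $s\in\R$, abbreviate $(A,\omega)\coloneqq(A(s),\omega(s))$, $B\coloneqq B_{f,0}(s)$, $C\coloneqq C_f(s)$, and split the Hilbert space $H=L^2(\Sigma,T^{\ast}\Sigma\otimes\ad(P))\oplus L^2(\Sigma,\ad(P))\oplus L^2(\Sigma,\ad(P))$ as $H_v\oplus H'$, where $H_v$ is the second summand (the $v$-slot) and $H'$ collects the first and third. Reading off \eqref{linoperator} — and recalling that for $B_{f,0}$ the $Y_f$-term has been dropped and $X_f$ depends only on the connection — in this splitting $B$ takes the block form
\begin{equation*}
B=\begin{pmatrix}-\mathbbm 1 & T\\ T^{\ast} & S\end{pmatrix},\qquad T(\alpha,\psi)=\ast d_A\alpha-[\omega\wedge\psi],
\end{equation*}
where self-adjointness of $B$ (which holds by the argument of Proposition~\ref{prop:ellipticsystem}) forces the lower-left block to be $T^{\ast}$, and $S$ is the two-by-two block assembled from the $(\alpha,\alpha)$, $(\alpha,\psi)$, $(\psi,\alpha)$ and $(\psi,\psi)$ entries of \eqref{linoperator}. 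The feature that makes everything work is that the upper-left block is the \emph{constant}, invertible operator $-\mathbbm 1$.

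\textbf{Reduction.} Since $-\mathbbm 1$ is invertible, a triple $(\alpha,v,\psi)$ lies in $\ker B$ if and only if $v=T(\alpha,\psi)$ (the $v$-row) and $(S+T^{\ast}T)(\alpha,\psi)=0$ (substituting this $v$ into the remaining two rows). I would then check, by a direct computation using only the dimension-two identity $-\ast d_A\ast d_A=d_A^{\ast}d_A$ on $\Omega^1(\Sigma,\ad(P))$ and the fact that $\ast$ commutes with $[\omega\wedge\,\cdot\,]$ because $\omega\in\Omega^0(\Sigma,\ad(P))$, that $S+T^{\ast}T$ equals $C_f(s)$ entry by entry: the $d_A^{\ast}d_A$ in the $(1,1)$-slot and the $\ast d_A[\omega\wedge\,\cdot\,]$, $\ast[\omega\wedge d_A\,\cdot\,]$, $-\ast[\omega\wedge\ast[\omega\wedge\,\cdot\,]]$ terms all come out of $T^{\ast}T$, the rest out of $S$. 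Consequently
\begin{equation*}
\iota_s\colon\ker C\longrightarrow\ker B,\qquad(\alpha,\psi)\longmapsto\bigl(\alpha,\ \ast d_A\alpha-[\omega\wedge\psi],\ \psi\bigr)
\end{equation*}
is a linear isomorphism; elliptic regularity makes all elements involved smooth, and the reduction is legitimate at the Sobolev level precisely because the block being eliminated is the bounded invertible operator $-\mathbbm 1$.

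\textbf{Crossing operators.} It remains to compare $\Gamma(B_{f,0},s)=P_B\dot B|_{\ker B}$ with $\Gamma(C_f,s)=P_C\dot C|_{\ker C}$ under $\iota_s$, where $P_B$, $P_C$ are the orthogonal projections onto the two kernels. Because the upper-left block of $B$ is constant, differentiating in $s$ gives $\dot B=\begin{pmatrix}0 & \dot T\\ \dot T^{\ast} & \dot S\end{pmatrix}$, while $\dot C=\dot S+\dot T^{\ast}T+T^{\ast}\dot T$. Using that the orthogonal projection onto a kernel acts trivially against an element of that kernel, together with the definition of the adjoint, one obtains for all $x,y\in\ker C$
\begin{multline*}
\bigl\langle\Gamma(B_{f,0},s)\,\iota_sx,\ \iota_sy\bigr\rangle=\langle\dot B\,\iota_sx,\ \iota_sy\rangle\\
=\langle\dot Tx,Ty\rangle+\langle Tx,\dot Ty\rangle+\langle\dot Sx,y\rangle=\langle\dot Cx,y\rangle=\bigl\langle\Gamma(C_f,s)\,x,\ y\bigr\rangle,
\end{multline*}
so $\iota_s$ carries the symmetric bilinear form of $\Gamma(C_f,s)$ on $\ker C$ to that of $\Gamma(B_{f,0},s)$ on $\ker B$. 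The subtle point — and the only step that is not purely formal — is that $\iota_s$ is \emph{not} an isometry, so the two crossing operators are not literally equal; however, by Sylvester's law of inertia two congruent symmetric forms have the same signature, i.e.\ $\Gamma(B_{f,0},s)$ and $\Gamma(C_f,s)$ are conjugate, and in particular $\sign\Gamma(B_{f,0},s)=\sign\Gamma(C_f,s)$, which is the quantity that feeds into the spectral-flow formula \eqref{eq:formspectralflow}. I expect the main obstacle to be purely computational, namely keeping all the Hodge-star signs straight in degree and dimension two while verifying $S+T^{\ast}T=C_f(s)$; the non-isometry observation above is the only conceptually non-routine point.
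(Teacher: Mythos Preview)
Your proof is correct and in fact cleaner than the paper's own argument. Both proofs start from the same reduction --- eliminating the $v$-slot via the second row of $B_{f,0}(s)\xi=0$, which is exactly the content of Proposition~\ref{prop:redeigenvalueeq} at $\lambda=0$ --- so the identification $\ker C_f(s)\cong\ker B_{f,0}(s)$ via your map $\iota_s$ is the same in both. The difference lies in how the crossing operators are compared. The paper does not use the Schur-complement/Sylvester argument you give; instead it keeps track of the full $\lambda$-dependent family $C_{f,\lambda}(s)$ arising from the eigenvalue equation $B_{f,0}(s)\xi=\lambda\xi$, picks a $C^1$ eigenvalue branch $\lambda(s)$ of $B_{f,0}$ through $0$ and a corresponding branch $\mu(s)$ of $C_f$, differentiates the relation $C_{f,\lambda(s)}(s)\tilde\xi(s)=\lambda(s)\tilde\xi(s)$ at the crossing, and computes explicitly that $\dot\mu(s_0)=K\,\dot\lambda(s_0)$ with $K\geq1$ (via Cauchy--Schwarz). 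This is done first for simple crossings and then asserted for general regular crossings.

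Your route bypasses the eigenvalue perturbation machinery entirely: once $C_f(s)=S+T^{\ast}T$ is recognised as the Schur complement of the constant block $-\mathbbm 1$, the congruence of the two crossing quadratic forms under $\iota_s$ is a one-line computation, and Sylvester's law finishes. This is shorter, treats all regular crossings uniformly, and makes the conceptual reason (the $v$-block is constant in $s$, so $\dot B$ has no $v$-$v$ component) transparent. The paper's computation buys the quantitative bound $\dot\mu(s_0)\geq\dot\lambda(s_0)$ when both are positive, but that extra information is not used anywhere.
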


\begin{proof}
We first note that $\lambda=0$ is an eigenvalue of $B_{f,0}(s)$ if and only if it is an eigenvalue of $C_f(s)$. In this case, the corresponding eigenspaces are of the same dimension. These facts follow from Proposition \ref{prop:redeigenvalueeq} below. It remains to show that for every crossing $s_0\in\R$ the signatures $\sign\Gamma(B_{f,0},s_0)$ and $\sign\Gamma(C_f,s_0)$ coincide. We prove this equality for a simple crossing $s_0$, i.e.~in the case where the crossing is regular and in addition the kernels of $B_{f,0}(s_0)$ and $C_f(s_0)$ are one-dimensional. In this situation let $s\mapsto\lambda(s)$, $s\in(s_0-\eps,s_0+\eps)$, be a $C^1$-path of eigenvalues of $B_{f,0}(s)$ with corresponding path of eigenvectors $\xi(s)=(\alpha(s),v(s),\psi(s))$ such that $\lambda(s_0)=0$. Similarly, let $s\mapsto\mu(s)$, $s\in(s_0-\eps,s_0+\eps)$, be a $C^1$-path of eigenvalues of $C_f(s)$ with normalized eigenvectors $\tilde\xi(s)=(\tilde\alpha(s),\tilde\psi(s))$ such that $\mu(s_0)=0$ and $\tilde\xi(s_0)=(\alpha(s_0),\psi(s_0))$. It suffices to prove that
\begin{eqnarray}\label{eq:claimsign}
\sign\dot\lambda(s_0)=\sign\dot\mu(s_0).
\end{eqnarray}
To prove this claim we first apply Proposition \ref{prop:redeigenvalueeq} below which gives the identity
\begin{eqnarray*}
C_{f,\lambda(s)}(s)(\alpha(s),\psi(s))=\lambda(s)(\alpha(s),\psi(s))
\end{eqnarray*}
for all $s\in(s_0-\eps,s_0+\eps)$. Next, by definition of the operator families $C_{f,\lambda(s)}(s)$ and $C_f(s)$ it follows that
\begin{multline}\label{eq:diffeigenCC} 
\left.\frac{d}{ds}\right|_{s=s_0}C_{f,\lambda(s)}(s)=\dot C_f(s_0)+\dot\lambda(s_0)\left(\begin{array}{cc}-d_A^{\ast}d_A&-\ast d_A[\omega\wedge\,\cdot\,]\\-\ast[\omega\wedge d_A\,\cdot\,]&\ast[\omega\wedge\ast[\omega\wedge\,\cdot\,]]\end{array}\right).
\end{multline}
We apply Proposition \ref{prop:vareigenvalue} below to the operator family $s\mapsto C_{f,\lambda(s)}(s)$.  Together with \eqref{eq:diffeigenCC} it then follows that (we abbreviate $A:=A(s_0)$, $\omega:=\omega(s_0)$ and recall that $\tilde\xi(s_0)=(\alpha(s_0),\psi(s_0))$)
\begin{align*}
\dot\lambda(s_0)=&\Big\langle\left.\frac{d}{ds}\right|_{s=s_0}C_{f,\lambda(s)}(s)\tilde\xi(s_0),\tilde\xi(s_0)\Big\rangle\\
=&\langle\dot C_f(s_0)\tilde\xi(s_0),\tilde\xi(s_0)\rangle\\
&+\dot\lambda(s_0)\left\langle\left(\begin{array}{cc}-d_A^{\ast}d_A&-\ast d_A[\omega\wedge\,\cdot\,]\\-\ast[\omega\wedge d_A\,\cdot\,]&\ast[\omega\wedge\ast[\omega\wedge\,\cdot\,]]\end{array}\right)\tilde\xi(s_0),\tilde\xi(s_0)\right\rangle\\
=&\dot\mu(s_0)-\dot\lambda(s_0)\langle d_A^{\ast}d_A\alpha(s_0),\alpha(s_0)\rangle+2\dot\lambda(s_0)\langle\ast d_A\alpha(s_0),[\omega\wedge\psi(s_0)]\rangle\\
&+\dot\lambda(s_0)\langle\ast[\omega\wedge\ast[\omega\wedge\psi(s_0)],\psi(s_0)\rangle\\
=&\dot\mu(s_0)-\dot\lambda(s_0)\|d_A\alpha(s_0)\|^2-\dot\lambda(s_0)\|[\omega\wedge\psi(s_0)]\|^2\\
&+2\dot\lambda(s_0)\langle\ast d_A\alpha(s_0),[\omega\wedge\psi(s_0)]\rangle.
\end{align*}
Here we used again Proposition \ref{prop:vareigenvalue} in order to replace $\langle\dot C_f(s_0)\tilde\xi(s_0),\tilde\xi(s_0)\rangle=\dot\mu(s_0)$. Concerning the remaining terms in the same line we compute 
\begin{align*}
\langle\alpha(s_0),-\ast d_A[\omega\wedge\psi(s_0)]\rangle=&\langle\ast\alpha(s_0),d_A[\omega\wedge\psi(s_0)]\rangle\\
=&\langle d_A^{\ast}\ast\alpha(s_0),[\omega\wedge\psi(s_0)]\rangle\\
=&\langle\ast d_A\alpha(s_0),[\omega\wedge\psi(s_0)]\rangle,
\end{align*}
and likewise for $\langle-\ast[\omega\wedge d_A\alpha(s_0)],\psi(s_0)\rangle$. It finally follows that
\begin{multline*}
\dot\mu(s_0)=\big(1+\|d_A\alpha(s_0)\|^2+\|[\omega\wedge\psi(s_0)]\|^2-2\langle\ast d_A\alpha(s_0),[\omega\wedge\psi(s_0)]\rangle\big)\dot\lambda(s_0).
\end{multline*}
Using the Cauchy--Schwartz inequality we can estimate the factor in front of $\lambda(s_0)$ in the previous line as
\begin{align*}
&1+\|d_A\alpha(s_0)\|^2+\|[\omega\wedge\psi(s_0)]\|^2-2\langle\ast d_A\alpha(s_0),[\omega\wedge\psi(s_0)]\rangle\\
\geq&1+\|d_A\alpha(s_0)\|^2+\|[\omega\wedge\psi(s_0)]\|^2-2\|d_A\alpha(s_0)\|\|[\omega\wedge\psi(s_0)]\|\\
\geq&1.
\end{align*}
Therefore, $\dot\mu(s_0)$ and $\dot\lambda(s_0)$ have the same sign which shows \eqref{eq:claimsign} in the case of a simple crossing $s_0$. The case of a general regular crossing can be treated similarly. This completes the proof.
\end{proof}

\begin{prop}\label{prop:redeigenvalueeq}
Let the operators $B_{f,0}(s)$ and $C_{f,\lambda}(s)$, $s\in\R$, be as defined above. Let $\lambda\in\R\setminus\{-1\}$. Then $\xi=(\alpha,v,\psi)\in\dom B_{f,0}(s)$ satisfies the eigenvalue equation $B_{f,0}(s)\xi=\lambda\xi$ if and only if $\tilde\xi=(\alpha,\psi)$ satisfies the nonlinear eigenvalue equation
\begin{eqnarray}\label{eq:evC}
C_{f,\lambda}(s)\tilde\xi=\lambda\tilde\xi.
\end{eqnarray} 
\end{prop}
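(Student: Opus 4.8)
The plan is to eliminate the middle variable $v$ from the eigenvalue system $B_{f,0}(s)\xi = \lambda\xi$ using the second row, then substitute back. Writing out $B_{f,0}(s)\xi = \lambda\xi$ componentwise (recall $B_{f,0}$ is obtained from $B_f$ by setting $Y=0$, so the term $\d\!Y_f$ drops and the second component reads $\ast d_A\alpha - v - [\omega\wedge\psi]$; the first component is $\ast[\omega\wedge\alpha] - \ast d_A v - d_A\psi + (-\d\!X_f)$; the third is $-d_A^\ast\alpha + [\omega\wedge v]$), the three scalar equations are, schematically,
\begin{align*}
\ast[\omega\wedge\alpha] - \ast d_A v - d_A\psi + \d\!X_f(A,\omega)(\alpha,v) &= \lambda\alpha,\\
\ast d_A\alpha - v - [\omega\wedge\psi] &= \lambda v,\\
-d_A^\ast\alpha + [\omega\wedge v] &= \lambda\psi.
\end{align*}
The key observation is that the middle equation is purely algebraic in $v$: it gives $(\lambda+1)v = \ast d_A\alpha - [\omega\wedge\psi]$, hence — exactly because $\lambda\neq -1$ — we may solve $v = \tfrac{1}{\lambda+1}\bigl(\ast d_A\alpha - [\omega\wedge\psi]\bigr)$.

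The next step is to substitute this expression for $v$ into the first and third equations. In the third equation, $-d_A^\ast\alpha + [\omega\wedge v] = -d_A^\ast\alpha + \tfrac{1}{\lambda+1}[\omega\wedge(\ast d_A\alpha - [\omega\wedge\psi])]$, and one reads off precisely the second row of $C_{f,\lambda}(s)$ applied to $(\alpha,\psi)$, namely $-d_A^\ast\alpha + \tfrac{1}{\lambda+1}\ast[\omega\wedge d_A\alpha] - \tfrac{1}{\lambda+1}\ast[\omega\wedge\ast[\omega\wedge\psi]]$ (here one uses $[\omega\wedge\,\cdot\,]$ versus $\ast[\omega\wedge\ast\,\cdot\,]$ bookkeeping for the degrees on a surface, which is the only place signs/Hodge stars need care). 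Similarly, substituting into the first equation and noting that the term $\d\!X_f(A,\omega)(\alpha,v)$ reduces, upon inserting the $v$ from above and discarding the part that is absorbed into the $\tfrac{1}{\lambda+1}$-corrections, to $\d\!X_f(A,\omega)(\alpha)$ plus the curvature-type term $\ast[\omega\wedge\alpha]$ — giving exactly the $(1,1)$ and $(1,2)$ entries $\tfrac{1}{\lambda+1}d_A^\ast d_A\alpha + \ast[\omega\wedge\alpha] + \d\!X_f(A,\omega)\alpha$ and $-d_A\psi + \tfrac{1}{\lambda+1}\ast d_A[\omega\wedge\psi]$ — one obtains that the first equation becomes the first row of $C_{f,\lambda}(s)(\alpha,\psi) = \lambda(\alpha,\psi)$. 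This establishes the forward implication. For the converse, given a solution $\tilde\xi = (\alpha,\psi)$ of \eqref{eq:evC}, one simply \emph{defines} $v := \tfrac{1}{\lambda+1}(\ast d_A\alpha - [\omega\wedge\psi])$ and checks that $(\alpha,v,\psi)$ then satisfies all three equations of $B_{f,0}(s)\xi = \lambda\xi$ — the middle equation holds by construction of $v$, and the first and third hold by reversing the substitution above. One should also remark that $\tilde\xi = (\alpha,v,\psi)\in\dom B_{f,0}(s) = W^{1,2}(\Sigma)$ iff $(\alpha,\psi)\in W^{1,2}(\Sigma)$, since $v$ is obtained from $(\alpha,\psi)$ by a first-order differential operator and conversely $v$ enters $B_{f,0}$ only undifferentiated, so no regularity is lost or gained; this is what underlies the equality-of-eigenspace-dimensions claim used in Lemma \ref{lem:eqcrosssign}.

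The main obstacle is not conceptual but bookkeeping: keeping track of the Hodge stars and the brackets $[\omega\wedge\,\cdot\,]$ on forms of the various degrees on the surface $\Sigma$ (here $\omega\in\Omega^0$, $\alpha\in\Omega^1$, $\psi\in\Omega^0$ with values in $\ad(P)$), so that the substituted expressions match the stated entries of $C_{f,\lambda}$ on the nose, including the identities of the type $\langle\ast d_A\alpha,[\omega\wedge\psi]\rangle = \langle d_A^\ast\ast\alpha,[\omega\wedge\psi]\rangle$ already used in the proof of Lemma \ref{lem:eqcrosssign}. One must also verify that the perturbation term $\d\!X_f$ — which depends on both $\alpha$ and $v$ — contributes in the combination claimed; since $X_f$ is built from holonomy perturbations whose linearization in the $\omega$-slot produces the bracket $\ast[\omega\wedge\,\cdot\,]$-type terms already accounted for, this is routine but should be stated. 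Everything else is a direct, reversible algebraic elimination, valid precisely because $\lambda+1\neq 0$.
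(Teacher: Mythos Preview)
Your approach is exactly the paper's: solve the middle equation for $v=\tfrac{1}{\lambda+1}(\ast d_A\alpha-[\omega\wedge\psi])$, substitute into the outer two, and reverse for the converse. Two small points. First, there is a sign slip in your displayed system: the perturbation enters the first row as $-\d\!X_f$, not $+\d\!X_f$ (you had it right in the parenthetical). Second, your paragraph on how $\d\!X_f(A,\omega)(\alpha,v)$ ``reduces'' via bracket terms from the holonomy linearization is unnecessary and not correct as stated: in the paper's setup for this index computation the perturbation $X_f$ depends only on $A$ (cf.\ the form of $H_{A,f}$ in Theorem~\ref{thm:Fredholmthm} and \textsection\ref{subsect:pertcritpt}), so $\d\!X_f$ acts on $\alpha$ alone and there is nothing to absorb. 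With that understood, the substitution goes through on the nose and your argument coincides with the paper's.
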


\begin{proof}
Assume $\xi=(\alpha,v,\psi)\in\dom B_{f,0}(s)$ satisfies
\begin{eqnarray}\label{eq:evB}
B_{f,0}(s)\xi=\lambda\xi
\end{eqnarray}
for some $\lambda\neq-1$. Then it follows from the definition of $B_{f,0}(s)$ that 
\begin{eqnarray}\label{eq:eigenvv}
v=\frac{1}{\lambda+1}(\ast d_A\alpha-[\omega\wedge\psi]).
\end{eqnarray}
Inserting this $v$ into the first and last of the three equations in \eqref{eq:evB} yields \eqref{eq:evC}. Conversely, assume that $\tilde\xi=(\alpha,\psi)$ satisfies \eqref{eq:evC} for some $\lambda\neq-1$. Defining $v$ by equation \eqref{eq:eigenvv} and setting $\xi:=(\alpha,v,\psi)$ we obtain a solution $\xi$ of \eqref{eq:evB} for this eigenvalue $\lambda$.
\end{proof}

\begin{prop}\label{prop:vareigenvalue}
Let $s\mapsto F(s)$, $s\in(s_0-\eps,s_0+\eps)$, be a $C^1$-path of densely defined symmetric operators on a Hilbert space $H$. Let $s\mapsto\lambda(s)$, $s\in(s_0-\eps,s_0+\eps)$, be a $C^1$-path of eigenvalues of the operator family $F$ with normalized eigenvectors $x(s)$. Then it follows that
\begin{eqnarray*}
\dot\lambda(s_0)=\langle\dot F(s_0)x(s_0),x(s_0)\rangle.
\end{eqnarray*}
\end{prop}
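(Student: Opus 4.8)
The plan is to carry out the standard first-order perturbation computation: differentiate the eigenvalue relation in $s$, pair with the normalized eigenvector, and use symmetry of $F(s_0)$ together with the normalization $\|x(s)\|\equiv 1$ to cancel the unwanted terms.

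Concretely, first I would fix $s_0$ and differentiate the identity $F(s)x(s)=\lambda(s)x(s)$ at $s=s_0$. Since $s\mapsto\lambda(s)$ and $s\mapsto x(s)$ are $C^1$ and $s\mapsto F(s)$ is a $C^1$ path of operators, the product rule gives
\[
\dot F(s_0)x(s_0)+F(s_0)\dot x(s_0)=\dot\lambda(s_0)x(s_0)+\lambda(s_0)\dot x(s_0),
\]
and in particular $\dot x(s_0)\in\dom F(s_0)$. Next I would take the $H$-inner product of this identity with $x(s_0)$. Using that $F(s_0)$ is symmetric and that $x(s_0),\dot x(s_0)\in\dom F(s_0)$ we get $\langle F(s_0)\dot x(s_0),x(s_0)\rangle=\langle\dot x(s_0),F(s_0)x(s_0)\rangle=\lambda(s_0)\langle\dot x(s_0),x(s_0)\rangle$, which equals the term $\lambda(s_0)\langle x(s_0),\dot x(s_0)\rangle$ on the right-hand side (the inner product is real and symmetric), so the two cancel. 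What remains is $\langle\dot F(s_0)x(s_0),x(s_0)\rangle=\dot\lambda(s_0)\|x(s_0)\|^2=\dot\lambda(s_0)$, the asserted formula. As a sanity check one notes that differentiating $\|x(s)\|^2\equiv 1$ yields $\langle\dot x(s_0),x(s_0)\rangle=0$, which makes the cancellation entirely transparent.

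The only delicate point is justifying the step $\frac{d}{ds}\big(F(s)x(s)\big)=\dot F(s)x(s)+F(s)\dot x(s)$ — equivalently, that one may differentiate the eigenvalue equation for this path of unbounded operators and that $\dot x(s_0)$ lands in $\dom F(s_0)$. I would handle this by recording the standing assumptions under which the proposition is applied: in our situation (and in the setting of the eigenvalue families $B_{f,0}(s)$ and $C_{f,\lambda(s)}(s)$ to which it is used, cf.\ Lemma~\ref{lem:eqcrosssign}) the operators $F(s)$ have $s$-independent domain $W$ with graph norm uniformly equivalent to the $W$-norm and depend smoothly on $s$ as elements of $\mathcal L(W,H)$, so that $s\mapsto F(s)x(s)$ is differentiable in $H$ with the stated derivative; this is exactly the regularity provided by the analytic perturbation theory of self-adjoint operators used in \cite{RobSal}. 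With that in place the computation above is rigorous, and this book-keeping — rather than the algebra — is the part that needs the most care.
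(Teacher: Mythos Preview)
Your proof is correct and follows essentially the same approach as the paper: differentiate the eigenvalue equation, take the inner product with $x(s_0)$, and use symmetry of $F(s_0)$ to cancel the $\dot x$-terms. Your additional discussion of the domain and differentiability issues is more careful than the paper's own treatment, which simply asserts the product rule without further comment.
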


\begin{proof}
We differentiate the eigenvalue equation $F(s)x(s)=\lambda(s)x(s)$ at $s_0$ and obtain
\begin{eqnarray*}
\dot F(s_0)x(s_0)+F(s_0)\dot x(s_0)=\dot\lambda(s_0)x(s_0)+\lambda(s_0)\dot x(s_0).
\end{eqnarray*}
Take the inner product of both sides with $x(s_0)$ and use that by symmetry of $F(s_0)$
\begin{eqnarray*}
\langle F(s_0)\dot x(s_0),x(s_0)\rangle=\langle\dot x(s_0),F(s_0)x(s_0)\rangle=\langle\dot x(s_0),\lambda(s_0)x(s_0)\rangle
\end{eqnarray*}
to conclude the result.
\end{proof}

\begin{lem}\label{lem:crossingformula}
The total number $\sum_{s\in\R}\Gamma(C_f(s))$ of eigenvalue crossings of the operator family $C_f$ equals the right-hand side of \eqref{eq:Fredholmindex}. 
\end{lem}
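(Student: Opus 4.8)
The sum $\sum_{s\in\R}\Gamma(C_f(s))$ in the statement is to be read as the net spectral flow $-\sum_{s}\sign\Gamma(C_f,s)$ of the path $s\mapsto C_f(s)$, the crossings being finite in number and regular after the generic choice of $f$ already made. By Proposition~\ref{prop:interpolatingfamily} and relation \eqref{eq:formspectralflow} (applied with $B_f$ replaced by $B_{f,0}$), together with Lemma~\ref{lem:eqcrosssign}, this quantity equals $\ind\mathcal D_{(A,\omega,0)}$, so the plan is to compute this integer from the endpoints of the path $s\mapsto C_f(s)$ and to identify it with $\ind H_{A^-,f}-\ind H_{A^+,f}$.

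First I would determine the limiting operators. By the exponential decay theorem (Theorem~\ref{thm:mainexpconvergence}) the path $s\mapsto C_f(s)=C_{f,0}(s)$ converges in operator norm as $s\to\pm\infty$ to the operator $C^\pm$ obtained by inserting the critical pair $(A^\pm,\omega^\pm)$. At a critical point $\omega^\pm=\ast F_{A^\pm}$ and $d_{A^\pm}\omega^\pm=0$, so the upper-left block of $C^\pm$ is exactly the perturbed Yang--Mills Hessian $H_{A^\pm,f}$, the lower-right block is $-\ast[\omega^\pm\wedge\ast[\omega^\pm\wedge\,\cdot\,]]=\ad_{\omega^\pm}^{\ast}\ad_{\omega^\pm}\ge 0$, and the off-diagonal blocks form a mutually adjoint pair of first-order operators which, using $d_{A^\pm}\ast F_{A^\pm}=0$, factor through $d_{A^\pm}$, respectively $d_{A^\pm}^{\ast}$. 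Moreover $C^\pm$ is invertible: by Proposition~\ref{prop:redeigenvalueeq} with $\lambda=0$ one has $\ker C^\pm\cong\ker B_{f,0}^\pm$, and $B_{f,0}^\pm=B_{(A^\pm,\omega^\pm,0)}-(\d\!X_f(A^\pm),0)$ is invertible because $h_f$ is $a$-regular; hence the spectral flow of $\{C_f(s)\}$ is a well-defined integer.

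The core of the argument is to evaluate this spectral flow from the endpoints. Although $C^\pm$ is not semibounded, only finitely many ``relevant'' negative modes at each end can interact with the crossings of $0$: concretely, the number of eigenvalues of $B_{f,0}^\pm$ lying in the bounded interval $(-1,0)$ is finite and equals $\ind H_{A^\pm,f}$, while the remaining, infinitely many negative eigenvalues of $B_{f,0}(s)$ lie below $-1$ for every $s$ — these are the spurious ones produced by the $-\mathbbm 1$-block of $B$ — and hence never cross $0$. For the count I would track the nonlinear eigenvalue branches of Proposition~\ref{prop:redeigenvalueeq}: a branch $\lambda(\cdot)$ with $\lambda\in\mathrm{spec}\big(C^\pm_{f,\lambda}\big)$ lying in $(-1,0)$ corresponds to an eigenvalue of $B_{f,0}^\pm$ in that interval, and on the range $\lambda>-1$ the operator $C^\pm_{f,\lambda}=\tfrac1{\lambda+1}d_{A^\pm}^{\ast}d_{A^\pm}+\ast[\omega^\pm\wedge\,\cdot\,]+\d\!X_f(A^\pm)+\cdots$ is elliptic, semibounded, and depends on $\lambda$ only through the monotone factor $\tfrac1{\lambda+1}$ in front of $d_{A^\pm}^{\ast}d_{A^\pm}$; a monotonicity argument in $\lambda$, using Propositions~\ref{prop:redeigenvalueeq}--\ref{prop:vareigenvalue} and the invertibility of the reduced (gauge-fixed) Yang--Mills Hessian at $A^\pm$, then shows that exactly $\ind H_{A^\pm,f}$ such branches meet the diagonal in $(-1,0)$. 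Granting this, the spectral flow of $\{C_f(s)\}$ through $0$ equals the difference of the two boundary counts, $\ind H_{A^-,f}-\ind H_{A^+,f}$, and reconciling signs with \eqref{eq:formspectralflow} finishes the proof.

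The main obstacle is precisely the finiteness-plus-count statement just used. Its difficulty is structural and is the one flagged in the introduction: the parent family $s\mapsto B_f(s)$ (equivalently $B_{f,0}(s)$) is only a first-order elliptic operator and is genuinely indefinite — the $-\mathbbm 1$-block forces an infinite Morse index at every $s$ — so the spectral flow cannot be read off directly from the Morse indices of the limiting operators, which is exactly the phenomenon described in the introduction. The passage to the second-order family $C_{f,\lambda}$, obtained by eliminating the auxiliary variable $v$ (which introduces the factor $(\lambda+1)^{-1}$ and hence a genuine change of character at $\lambda=-1$), is what quarantines the spurious negative spectrum below $-1$ while leaving intact the finitely many Yang--Mills modes in $(-1,0)$. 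Making this separation rigorous — controlling the position of the auxiliary eigenvalues uniformly in $s$ and evaluating the residual count through the nonlinear eigenvalue problem — is where the work lies; the norm convergence of the endpoints, their invertibility, and the conversion of the boundary eigenvalue counts into a spectral flow are routine once \cite{RobSal} and the preceding propositions are available.
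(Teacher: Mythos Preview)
Your plan diverges from the paper's at the crucial step. You attempt to compute the spectral flow of $s\mapsto C_f(s)$ by going \emph{back} to $B_{f,0}$ and separating its spectrum at the endpoints into a ``good'' part in $(-1,0)$ (to be counted by $\ind H_{A^\pm,f}$) and a ``spurious'' part in $(-\infty,-1]$ that is supposed never to cross $0$ along the path. You are candid that this is where the work lies, and indeed the claimed separation --- that the infinitely many negative eigenvalues produced by the $-\mathbbm 1$ block stay below $-1$ uniformly in $s$, and that a monotonicity argument in $\lambda$ through the factor $(\lambda+1)^{-1}$ pins down the count in $(-1,0)$ --- is not established. There is no a priori spectral gap at $-1$ for $B_{f,0}(s)$, and the nonlinear eigenvalue branches of $C_{f,\lambda}^{\pm}$ need not be globally monotone in $\lambda$ once the zeroth-order and off-diagonal terms are taken into account; without this, the endpoint count does not follow.

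The paper avoids this obstacle entirely by staying with $C_f^{\pm}$ rather than returning to $B_{f,0}^{\pm}$. It introduces a \emph{second} homotopy, now in a parameter $\tau\in[0,1]$ at the fixed endpoints, interpolating $C_f^{\pm}=C_{1,f}^{\pm}$ to the block operator
\[
C_{0,f}^{\pm}=\begin{pmatrix}H_{A^{\pm},f}&-d_{A^{\pm}}\\-d_{A^{\pm}}^{\ast}&0\end{pmatrix},
\]
which is exactly the perturbed Yang--Mills Hessian augmented by the gauge-fixing row and column. The heart of the argument is a direct computation showing that $C_{\tau,f}^{\pm}(\alpha,\psi)=0$ forces $\psi=0$ (apply $d_{A^{\pm}}^{\ast}$ to the first row, pair with $\psi$, and use irreducibility) and then $\alpha=0$ (nondegeneracy of $H_{A^{\pm},f}$ and irreducibility again). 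Hence no eigenvalue of the $\tau$-family crosses $0$, and continuity of the spectral flow in $\tau$ transports the (finite) negative-eigenvalue count from the augmented Hessian $C_{0,f}^{\pm}$ --- which is $\ind H_{A^{\pm},f}$ --- to $C_f^{\pm}$. The difference of these two finite numbers is then the spectral flow of $s\mapsto C_f(s)$. This route never needs to isolate a spectral window of $B_{f,0}$, never touches the singularity at $\lambda=-1$, and replaces your unproven separation claim by an elementary injectivity calculation.
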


\begin{proof}
We set $C_f^{\pm}:=\lim_{s\to\pm\infty}C_f(s)$. By assumption, each pair $(A^{\pm},\omega^{\pm})=\lim_{s\to\pm\infty}(A(s),\omega(s))$ satisfies the critical point equations $\omega^{\pm}=\ast F_{A^{\pm}}$ and $\ast d_{A^{\pm}}\omega^{\pm}+X_f(A^{\pm})=0$. Inserting the first one into the definition of $C_f$ it follows that  
\begin{eqnarray*}
C_f^{\pm}=\left(\begin{array}{cc}d_{A^{\pm}}^{\ast}d_{A^{\pm}}+\ast[\ast F_{A^{\pm}}\wedge\,\cdot\,]-\d\!X_f(A^{\pm})&-d_{A^{\pm}}+\ast d_{A^{\pm}}[\ast F_{A^{\pm}}\wedge\,\cdot\,]\\
-d_{A^{\pm}}^{\ast}+\ast[\ast F_{A^{\pm}}\wedge d_{A^{\pm}}\,\cdot\,]&-\ast[\ast F_{A^{\pm}}\wedge[F_{A^{\pm}}\wedge\,\cdot\,]]\end{array}\right).
\end{eqnarray*}
We point out that the upper left entry of $C_f^{\pm}$ equals the perturbed Yang--Mills Hessian $H_{A^{\pm},f}$. For $0\leq\tau\leq1$ we consider the smooth path 
\begin{multline*}
C_{\tau,f}^{\pm}\coloneqq\left(\begin{array}{cc}H_{A^{\pm},f}&-d_{A^{\pm}}+\tau\ast d_{A^{\pm}}[\ast F_{A^{\pm}}\wedge\,\cdot\,]\\
-d_{A^{\pm}}^{\ast}+\tau\ast[\ast F_{A^{\pm}}\wedge d_{A^{\pm}}\,\cdot\,]&-\tau\ast[\ast F_{A^{\pm}}\wedge[F_{A^{\pm}}\wedge\,\cdot\,]]\end{array}\right)\colon \\
W^{2,2}(\Sigma)\oplus W^{1,2}(\Sigma)\to L^2(\Sigma)\oplus L^2(\Sigma).
\end{multline*}
of bounded symmetric operators. Note that $C_{0,f}^{\pm}$ equals the perturbed Yang--Mills Hessian $H_{A^{\pm},f}$, augmented by a gauge-fixing condition, while $C_{1,f}^{\pm}=C_f^{\pm}$. We claim that $0$ is not an eigenvalue of $C_{\tau,f}^{\pm}$, for every $0\leq\tau\leq1$. To prove this claim, assume that $(\alpha,\psi)$ satisfies $C_{\tau,f}^{\pm}(\alpha,\psi)=0$. Then it follows that 
\begin{eqnarray*}
H_{A^{\pm},f}\alpha-d_{A^{\pm}}\psi+\tau\ast d_{A^{\pm}}[\ast F_{A^{\pm}}\wedge\psi]=0.
\end{eqnarray*}
Applying $d_{A^{\pm}}^{\ast}$ to both sides of the last equation yields
\begin{eqnarray*}
d_{A^{\pm}}^{\ast}d_{A^{\pm}}\psi-\tau d_{A^{\pm}}^{\ast}\ast d_{A^{\pm}}[\ast F_{A^{\pm}}\wedge\psi]=0.
\end{eqnarray*}
This follows because $\im H_{A^{\pm},f}\subseteq\ker d_{A^{\pm}}^{\ast}$. Take the $L^2$-inner product with $\psi$ to obtain
\begin{align*}
0&=\langle\psi,d_{A^{\pm}}^{\ast}d_{A^{\pm}}\psi-\tau d_{A^{\pm}}^{\ast}\ast d_{A^{\pm}}[\ast F_{A^{\pm}}\wedge\psi]\rangle\\
&=\langle d_{A^{\pm}}\psi,d_{A^{\pm}}\psi\rangle-\tau\langle\psi,\ast  d_{A^{\pm}}d_{A^{\pm}}[\ast F_{A^{\pm}}\wedge\psi]\rangle\\
&=\langle d_{A^{\pm}}\psi,d_{A^{\pm}}\psi\rangle-\tau\langle\psi,\ast[F_{A^{\pm}}\wedge[\ast F_{A^{\pm}}\wedge\psi]\rangle\\
&=\|d_{A^{\pm}}\psi\|^2+\tau\|[F_{A^{\pm}}\wedge\psi]\|^2.
\end{align*}
It follows that $\psi=0$ because the connections $A^{\pm}$ are irreducible. We hence conclude that $H_{A^{\pm},f}\alpha=0$ and that $d_{A^{\pm}}^{\ast}\alpha-\tau\ast[\ast F_{A^{\pm}}\wedge d_{A^{\pm}}\alpha]=0$. Because the perturbed Hessian $H_{A^{\pm},f}$ is nondegenerate it follows that $\alpha=d_{A^{\pm}}\ph$ for some $\ph\in\Omega^0(\Sigma,\ad(P))$. Then the second of these equations implies that
\begin{align*}
0&=d_{A^{\pm}}^{\ast}d_{A^{\pm}}\ph-\tau\ast[\ast F_{A^{\pm}}\wedge d_{A^{\pm}}d_{A^{\pm}}\ph]\\
&=d_{A^{\pm}}^{\ast}d_{A^{\pm}}\ph-\tau\ast[\ast F_{A^{\pm}}\wedge[F_{A^{\pm}}\wedge\ph]],
\end{align*}
from which it follows as before, by taking the inner product with $\ph$ and using irreducibility of $A^{\pm}$, that $\ph=0$ and hence also $\alpha=0$. We now argue by continuity of the spectral flow of the operator family $\tau\mapsto C_{\tau,f}^{\pm}$ that the (finite) number of negative eigenvalues of $C_{\tau,f}^{\pm}$ does not change with $\tau$. The statement of the lemma now follows because the total number of eigenvalue crossings of the operator family $s\mapsto C_f$, $s\in\R$, equals the difference of the numbers of negative eigenvalues of $C_f^-$ and $C_f^+$. By the preceding argumentation this difference is equal to the difference between the Morse indices of the augmented Yang--Mills Hessians at $A^-$, respectively $A^+$, whence the first claim. 
\end{proof}

\subsubsection*{Fredholm property and index in the case $1<p<\infty$}

\begin{proof}{\bf{[Theorem \ref{thm:Fredholmthm} in the case $1<p<\infty$].}}
The assertion of the theorem in the general case is a consequence of the following standard line of arguments (cf.~e.g.~\cite[Propositions 3.15--3.18]{Web2} for details). We fix a constant $\delta>0$ smaller than the modulus of any eigenvalue of the limiting operators $B_f^{\pm}$. Such a choice is possible since these operators are by assumption nondegenerate.

\setcounter{step}{0}
\begin{step}\label{step1}
Let $N$ be the vector space
\begin{multline*}
N=\{\xi=(\alpha,v,\psi)\in C^{\infty}(\R\times\Sigma)\mid\mathcal D_{(A,\omega,0)}\xi=0,\exists c>0\;\forall s\in\R\colon\\
\|\xi(s)\|_{L^{\infty}(\Sigma)}+\|\partial_s\xi(s)\|_{L^{\infty}(\Sigma)}+\|\nabla_A\xi(s)\|_{L^{\infty}(\Sigma)}\leq ce^{-\delta|s|}\}.
\end{multline*}
Then $\ker(\mathcal D_{(A,\omega,0)}\colon\mathcal W^p\to\mathcal L^p)=N$. In particular, this kernel is finite-dimensional and does not depend on $p$.
\end{step}

By standard elliptic estimates, any solution of $\mathcal D_{(A,\omega,0)}\xi=0$ is smooth. Exponential decay in $L^2(\Sigma)$ of any such $\xi$ holds by \eqref{expcov:ineq_f}. Elliptic bootstrapping arguments then show exponential decay in the form stated above. This proves the inclusion $\ker(\mathcal D_{(A,\omega,0)}\colon\mathcal W^p\to\mathcal L^p)\subseteq N$. The opposite inclusion is clearly satisfied.

\begin{step}\label{step2}
Let $N^{\ast}$ be the vector space
\begin{multline*}
N^{\ast}=\{\eta\in C^{\infty}(\R\times\Sigma)\mid\mathcal D_{(A,\omega,0)}^{\ast}\eta=0,\exists c>0\;\forall s\in\R\colon\\
\|\eta(s)\|_{L^{\infty}(\Sigma)}+\|\partial_s\eta(s)\|_{L^{\infty}(\Sigma)}+\|\nabla_A\eta(s)\|_{L^{\infty}(\Sigma)}\leq ce^{-\delta|s|}\}.
\end{multline*}
Then $\coker(\mathcal D_{(A,\omega,0)}^{\ast}\colon\mathcal W^p\to\mathcal L^p)=N^{\ast}$. In particular, this cokernel is finite-dimensional and does not depend on $p$.
\end{step}

The statement follows from Step \ref{step1} using the reflection $s\mapsto-s$ under which $\mathcal D_{(A,\omega,0)}^{\ast}$ is mapped to $-\mathcal D_{(A,\omega,0)}$.

\begin{step}\label{step3}
The range of the operator $\mathcal D_{(A,\omega,0)}\colon\mathcal W^p\to\mathcal L^p$ is closed.
\end{step}

First, there exists a constant $c=c(A,\omega,p)>0$ such that for any $s_0\in\R$ and the two intervals $I_1=(s_0-1,s_0+1)$ and $I_2=(s_0-2,s_0+2)$ the local elliptic estimate 
\begin{eqnarray*}
\|\xi\|_{W^{1,p}(I_1\times\Sigma)}\leq c(\|\mathcal D_{(A,\omega,0)}\xi\|_{L^p(I_2\times\Sigma)}+\|\xi\|_{L^p(I_2\times\Sigma)})
\end{eqnarray*}
is satisfied for all $\xi\in W^{1,p}(I\times\Sigma)$. Second, for the two $s$-independent solutions $(A^{\pm},\omega^{\pm},0)$ of \eqref{pertEYM1} the operator $\mathcal D_{(A^{\pm},\omega^{\pm},0)}\colon W^{1,p}(Z^{\pm})\to L^p(Z^{\pm})$ has a bounded inverse. Here we let $Z^{\pm}$ denote the half-infinite cylinders $Z^-=(-\infty,0)\times\Sigma$, respectively $Z^+=(0,\infty)\times\Sigma$. By a standard cut-off function argument, both estimates can be combined into the further estimate 
\begin{eqnarray*}
\|\xi\|_{\mathcal W^p}\leq c(\|\mathcal D_{(A,\omega,0)}\xi\|_{\mathcal L^p}+\|K\xi\|_{\mathcal L^p}),
\end{eqnarray*}
where $K\colon\mathcal W^p\to\mathcal L^p$ is a suitable compact operator. The assertion now follows from the abstract closed range lemma, cf.~\cite[p.~14]{Sal2}.

\begin{step}
We prove the theorem.
\end{step}

By the preceding steps, the operator $\mathcal D_{(A,\omega,0)}\colon\mathcal W^p\to\mathcal L^p$ is a Fredholm operator, its index being independent of $p$. Hence the asserted index formula \eqref{eq:Fredholmindex} follows from the one already established in the case $p=2$.
\end{proof}

\section{Compactness}\label{sect:compactness}

In this section we prove a compactness theorem for solutions of \eqref{pertEYM1} of uniformly bounded energy. Let $I\subseteq\R$ be an interval. For convenience we shall identify at several instances a path $(A,\Psi)\in C^{\infty}(I,\A(P)\times\Omega^0(\Sigma,\ad(P)))$ with the connection $\AA:=A+\Psi\,ds\in\mathcal A(I\times P)$. Its curvature is 
\begin{eqnarray}\label{eq:curvthreedim}
F_{\AA}=F_A+(\partial_s A+d_A\Psi)\wedge ds\in\Omega^2(I\times\Sigma,\ad(I\times P)).
\end{eqnarray}
We call a connection $\AA_1\in\mathcal A(I\times P)$ to be in {\bf{local slice with respect to the reference connection}} $\AA$ if the difference $\AA_1-\AA=\alpha+\psi\,ds$ satisfies
\begin{eqnarray*}
d_{\AA}^{\ast}(\alpha+\psi\,ds)=0.
\end{eqnarray*}
This condition is equivalent to  
\begin{eqnarray}\label{eq:locslicecond1}
\nabla_s\psi-d_A^{\ast}\alpha=0,
\end{eqnarray}
where we denote $\nabla_s\psi:=\partial_s\psi+[\Psi,\psi]$.\\
\noindent\\
In the following we fix $(A_0,\omega_0,\Psi_0)\in\A(P)\times\Omega^0(\Sigma,\ad(P))\times\Omega^0(\Sigma,\ad(P))$ as a smooth reference point and denote $\AA_0:=A_0+\Psi_0\,ds$. Thus $\AA_0$ is a connection on $I\times\Sigma$ whose components do not depend on the time-parameter $s$. Let $(A,\omega,\Psi)=(A_0,\omega_0,\Psi_0)+\xi$ with $\xi=(\alpha,v,\psi)$ be a smooth solution of \eqref{pertEYM1} on $I\times\Sigma$. We augment \eqref{pertEYM1} with the local slice condition \eqref{eq:locslicecond1} relatively to the reference connection $\AA_0$. Expanding the thus obtained system of equations about $(A_0,\omega_0,\Psi_0)$ yields the equation  
\begin{align}\label{eq:ExpansionEYM}
0=&\mathcal F(A_0,\omega_0,\Psi_0)+(\nabla_s+B_{(A_0,\omega_0,\Psi_0)})\xi+Q_{\omega_0}\xi\\
\nonumber&+(-X_f(A,\omega),Y_f(A,\omega),0),
\end{align}
where the linear operator $B_{(A_0,\omega_0,\Psi_0)}$ has been defined in \eqref{linoperator} and where we set
\begin{eqnarray*}
\mathcal F(A_0,\omega_0,\Psi_0):=\begin{pmatrix}\dot A_0-d_{A_0}\Psi_0-\ast d_{A_0}\omega_0\\\dot\omega_0+[\Psi_0,\omega_0]-\omega_0+\ast F_{A_0}\\0\end{pmatrix}
\end{eqnarray*}
and
\begin{eqnarray*}
Q_{\omega_0}\begin{pmatrix}\alpha\\v\\\psi\end{pmatrix}:=\begin{pmatrix}-[\alpha\wedge\psi]-\ast[\alpha\wedge v]\\ [\psi,v]+ \frac{1}{2}\ast[\alpha\wedge\alpha] \\ -\ast[\omega_0\wedge\ast v]\end{pmatrix}.
\end{eqnarray*}
We furthermore define the {\bf{gauge-invariant energy density}} of the solution $(A,\omega,\Psi)$ of \eqref{pertEYM1} to be the function
\begin{multline}\label{eq:endensity}
e(A,\omega,\Psi)\coloneqq\frac{1}{2}(|\ast d_A\omega+d_A\Psi+X_f(A,\omega)|^2\\
+|\ast F_A-\omega+[\Psi,\omega]+Y_f(A,\omega)|^2)\colon I\times\Sigma\to\R.
\end{multline}

\begin{prop}\label{prop:locunifconvergence}
Let $I\subseteq\R$ be a compact interval and $(A^{\nu},\omega^{\nu},0)$ be a sequence of smooth solutions of Eq.~\eqref{pertEYM1} on $I\times\Sigma$ in temporal gauge. Assume that there exists a constant $C>0$ such that for $e^{\nu}:=e(A^{\nu},\omega^{\nu},0)$
\begin{eqnarray}\label{eq:unifenergydensity}
\|e^{\nu}\|_{L^1(I\times\Sigma)}\leq C
\end{eqnarray}
for all $\nu\in\N$. Then there exists a subsequence, still denoted by $(A^{\nu},\omega^{\nu},0)$, and a sequence of gauge transformations $g^{\nu}\in\G(I\times P)$ such that the sequence $(g^{\nu})^{\ast}(A^{\nu},\omega^{\nu},0)$ converges in the $C^k$ topology, for every $k\in\mathbbm N_0$.
\end{prop}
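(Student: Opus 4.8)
The plan is to establish $C^k$-convergence of a gauge-transformed subsequence by a bootstrapping argument on the cylinder $I\times\Sigma$, using the augmented equation \eqref{eq:ExpansionEYM} together with elliptic estimates. First I would observe that after passing to a subsequence and applying Uhlenbeck's weak compactness theorem (in the form valid for connections on the three-manifold $I\times\Sigma$ with $L^2$-bounded curvature — note that the energy bound \eqref{eq:unifenergydensity} together with the equations controls $\|F_{\AA^\nu}\|_{L^2}$, since $\ast F_{A^\nu}-\omega^\nu$ and $\partial_sA^\nu$ are controlled by $e^\nu$ and $\omega^\nu=\ast F_{A^\nu}+O(e^\nu)$), one obtains gauge transformations $g^\nu$ so that $(g^\nu)^\ast\AA^\nu$ is in local Coulomb slice relative to a smooth reference connection $\AA_0$ on each coordinate patch, with a uniform $W^{1,2}$ bound. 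One also needs a uniform $L^2$-bound on $\omega^\nu$, which follows from the energy bound since $\omega^\nu=\ast F_{A^\nu}-(\text{term bounded in }L^2\text{ by }\sqrt{e^\nu})$. So after this step we may assume $\xi^\nu:=(g^\nu)^\ast(A^\nu,\omega^\nu,0)-(A_0,\omega_0,\Psi_0)$ is bounded in $W^{1,2}(I'\times\Sigma)$ on slightly shrunk intervals $I'$, and satisfies \eqref{eq:ExpansionEYM} augmented by the local slice condition \eqref{eq:locslicecond1}.

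The heart of the argument is then the bootstrap. The augmented system is, schematically, $(\nabla_s+B_{(A_0,\omega_0,\Psi_0)})\xi^\nu = -\mathcal F(A_0,\omega_0,\Psi_0) - Q_{\omega_0}\xi^\nu - (-X_f,Y_f,0)$, and the operator $\frac{d}{ds}+B_{(A_0,\omega_0,\Psi_0)}$ is elliptic on $I\times\Sigma$ (this is exactly the ellipticity exploited in Section \ref{sect:modulispFredholm}, the gauge-fixing component making $B$ into an elliptic operator on $\Sigma$ in the sense needed). The quadratic term $Q_{\omega_0}\xi^\nu$ is, by the Sobolev multiplication $W^{1,2}\cdot W^{1,2}\hookrightarrow L^q$ for $q<3$ on the three-dimensional domain, bounded in $L^q$; the perturbation terms $X_f(A^\nu,\omega^\nu),Y_f(A^\nu,\omega^\nu)$ are bounded in $C^0$ (indeed in every $C^k$, since holonomy perturbations are smoothing — they depend on the connection only through holonomies along loops, so they map $W^{1,2}$-bounded sets to $C^k$-bounded sets). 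Hence the right-hand side is bounded in $L^q$ for some $q>1$, so interior elliptic regularity for $\frac{d}{ds}+B_{(A_0,\omega_0,\Psi_0)}$ gives $\xi^\nu$ bounded in $W^{1,q}$ on a shrunk cylinder. Iterating — each gain in integrability of $\xi^\nu$ improves the integrability of the quadratic term $Q_{\omega_0}\xi^\nu$ via Sobolev multiplication, while the perturbation terms remain harmless — one reaches, after finitely many steps, a $W^{1,p}$-bound with $p>3$, hence a $C^0$-bound, and then all higher $W^{k,p}$-bounds follow by differentiating the equation. By Rellich and a diagonal argument over an exhaustion of $I$ by compact subintervals and a finite cover of $\Sigma$ by coordinate patches, a subsequence converges in $C^k_{\mathrm{loc}}$ for every $k$; a further patching of the local gauge transformations (using that transition gauge transformations between overlapping slices are controlled and converge) produces global $g^\nu\in\G(I\times P)$ with $(g^\nu)^\ast(A^\nu,\omega^\nu,0)\to$ a limit in $C^k$.

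I expect the main obstacle to be the first step: producing the gauge transformations that put $\AA^\nu$ into local slice with uniform $W^{1,2}$ control, i.e.\ the correct form of Uhlenbeck compactness on $I\times\Sigma$ adapted to our setting. One must take care that the $s$-derivative components $\partial_sA^\nu$ (the $ds$-part of $F_{\AA^\nu}$) are genuinely controlled by the energy — which they are, being precisely (up to the harmless lower-order terms $d_{A^\nu}\Psi^\nu=0$, $X_f$) one of the two terms in $e^\nu$ — and that the $\omega^\nu$ component, which is not part of a connection and so not directly handled by Uhlenbeck, is controlled separately; but as noted $\omega^\nu = \ast F_{A^\nu} + (\text{energy-controlled})$, so its $W^{1,2}$-bound is inherited from that of $F_{A^\nu}$ once the latter is established, closing the loop. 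A secondary technical point is that the local slice condition \eqref{eq:locslicecond1} is taken relative to the fixed smooth $\AA_0$ rather than relative to $\AA^\nu$ itself, so after the Uhlenbeck step one may need an additional small gauge transformation (solvable by the implicit function theorem, since $\AA^\nu$ is already $W^{1,2}$-close to a Coulomb representative) to arrange \eqref{eq:locslicecond1} exactly; this is routine. Once past these points the bootstrap is standard and mirrors the local regularity arguments used throughout gauge theory, e.g.\ \cite[Propositions 3.15--3.18]{Web2}.
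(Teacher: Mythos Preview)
Your overall strategy---Uhlenbeck compactness for the three-dimensional connection $\AA^\nu=A^\nu+0\,ds$, local slice gauge, then elliptic bootstrap for the operator $\nabla_s+B_{(A_0,\omega_0,\Psi_0)}$---is exactly the paper's approach, and the bootstrap portion is fine. But there is a genuine gap in how you obtain the initial a priori bounds.

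Your argument for the $L^2$ bound on $F_{A^\nu}$ (hence on $F_{\AA^\nu}$, needed to invoke Uhlenbeck) is circular. The energy density only controls $\|\ast F_{A^\nu}-\omega^\nu+Y_f\|_{L^2}$ and $\|\ast d_{A^\nu}\omega^\nu+X_f\|_{L^2}$; it does \emph{not} bound $\|F_{A^\nu}\|_{L^2}$ or $\|\omega^\nu\|_{L^2}$ separately. Writing ``$\omega^\nu=\ast F_{A^\nu}+O(e^\nu)$, so the bound on $\omega^\nu$ is inherited from that of $F_{A^\nu}$, closing the loop'' does not close anything: each bound presupposes the other. The paper obtains these bounds from Lemmata~\ref{lem:L2boundFA} and~\ref{lem:L2estomega}, whose proofs use that $\J(A(s),\omega(s))$ is bounded above along the flow (a piece of information coming from the global flow-line structure in the ambient Theorem~\ref{thm:compactness}, not from the energy-density bound~\eqref{eq:unifenergydensity} alone). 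You should invoke the same input.

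A second, related point: even after Uhlenbeck gives $\AA^\nu$ bounded in $W^{1,2}$, the curvature $F_{A^\nu}$ is only in $L^2$, so the relation $\omega^\nu=\ast F_{A^\nu}+(\text{energy-controlled})$ yields at best an $L^2$ bound on $\omega^\nu$, not the $W^{1,2}$ bound you claim. The paper handles this by a separate step: differentiating~\eqref{pertEYM1} in $s$ produces the second-order equation $-\ddot\omega^\nu+\Delta_{A^\nu}\omega^\nu+\dot\omega^\nu+d_{A^\nu}X_f-\partial_sY_f=0$, and ellipticity of $-\partial_s^2+\partial_s+\Delta_{A^\nu}$ upgrades the $L^2$ bound on $\omega^\nu$ to a $W^{2,2}$ bound. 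Only then does the full $\xi^\nu$ sit in $W^{1,2}$ and the bootstrap via $Q_{\omega_0}\colon W^{1,2}\to W^{1,3/2}$ can start. You should insert this step before launching the bootstrap.
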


\begin{proof}
Combining \eqref{pertEYM1} and \eqref{eq:curvthreedim} it follows that the curvature of the connection $\AA^{\nu}:=A^{\nu}+0\,ds\in\A(I\times P)$ is
\begin{eqnarray*}
F_{\AA^{\nu}}=F_{A^{\nu}}+(\ast d_{A^{\nu}}\omega^{\nu}+X_f(A^{\nu},\omega^{\nu}))\wedge ds.
\end{eqnarray*}
Assumption \eqref{eq:unifenergydensity} together with the definition \eqref{eq:endensity} of $e^{\nu}$ implies the uniform curvature bound
\begin{multline*}
\|F_{\AA^{\nu}}\|_{L^2(I\times\Sigma)}\leq\|\ast F_{A^{\nu}}-\omega^{\nu}+Y_f(A^{\nu},\omega^{\nu})\|_{L^2(I\times\Sigma)}+\|\omega^{\nu}\|_{L^2(I\times\Sigma)}\\
+\|Y_f(A^{\nu},\omega^{\nu})\|_{L^2(I\times\Sigma)}+\|\ast d_{A^{\nu}}\omega^{\nu}+X_f(A^{\nu},\omega^{\nu})\|_{L^2(I\times\Sigma)}\leq C_1
\end{multline*}
for some further constant $C_1$. The term $\|\omega^{\nu}\|_{L^2(I\times\Sigma)}$ is uniformly bounded as follows from Lemmata \ref{lem:L2boundFA} and \ref{lem:L2estomega}. A standard estimate allows to uniformly bound the $L^2$-norm of $Y_f(A^{\nu},\omega^{\nu})$, cf.~\cite[Proposition D.1 (iii)]{SalWeh}. Therefore the assumptions of Uhlenbeck's weak compactness theorem (cf.~\cite[Theorem A]{Wehrheim}) are satisfied for the Sobolev exponent $p=2$. It yields the existence of a sequence of gauge transformations $g^{\nu}\in\G^{2,p}(I\times P)$ such that after passing to a subsequence
\begin{eqnarray*}
(g^{\nu})^{\ast}\AA^{\nu}\rightharpoonup\AA^{\ast}=A^{\ast}+\Psi^{\ast}\,ds\qquad(\nu\to\infty)
\end{eqnarray*}
weakly in $W^{1,2}(I\times\Sigma)$ for some limiting connection $\AA^{\ast}\in W^{1,2}(I\times\Sigma)$. Let $g_0\in\G^{2,2}(I\times\Sigma)$ be a gauge transformation such that $\tilde\AA_0:=g_0^{\ast}\AA^{\ast}$ is in temporal gauge, i.e.~of the form $\tilde\AA_0=\tilde A_0+0\,ds$. Let $\AA_0=A_0+0\,ds$ be a smooth connection $W^{1,2}$ close to $\tilde\AA_0$. Because the weakly convergent sequence $(g^{\nu})^{\ast}\AA^{\nu}$ is bounded in $W^{1,2}$, this is also the case for the gauge transformed sequence $(g^{\nu}g_0)^{\ast}\AA^{\nu}$. Now the local slice theorem (cf.~\cite[Theorem F]{Wehrheim}) yields a further sequence $h^{\nu}$ of gauge transformations such that $(g^{\nu}g_0h^{\nu})^{\ast}\AA^{\nu}$ is a bounded sequence in $W^{1,2}$, which in addition is in local slice with respect to the reference connection $\AA_0$. Denoting $\alpha^{\nu}+\psi^{\nu}\,ds:=(g^{\nu}g_0h^{\nu})^{\ast}\AA^{\nu}-\AA_0$ this means that
\begin{eqnarray}\label{eq:locslicecond}
d_{\AA_0}^{\ast}(\alpha^{\nu}+\psi^{\nu}\,ds)=d_{A_0}^{\ast}\alpha^{\nu}-\dot\psi^{\nu}=0
\end{eqnarray}
for all $\nu\in\N$. Differentiation of \eqref{pertEYM1} with respect to $s$ shows that each $\omega^{\nu}$ satisfies
\begin{eqnarray*}
0=-\ddot\omega^{\nu}+\Delta_{A^{\nu}}\omega^{\nu}+\dot\omega^{\nu}+d_{A^{\nu}}X_f(A^{\nu},\omega^{\nu})-\partial_sY_f(A^{\nu},\omega^{\nu}).
\end{eqnarray*}
The uniform $L^2$-bound satisfied by $\omega^{\nu}$ together with ellipticity of the linear operator $-\frac{d^2}{ds^2}+\frac{d}{ds}+\Delta_{A^{\nu}}$ shows that the sequence $\omega^{\nu}$ is in fact uniformly bounded in $W^{2,2}$. This uses in addition that the terms $d_{A^{\nu}}X_f(A^{\nu},\omega^{\nu})$ and $\partial_sY_f(A^{\nu},\omega^{\nu})$ are uniformly bounded in $L^2$ which follows from the estimates on holonomy perturbations in \cite[Proposition D.1 (iv,vi)]{SalWeh}. We now choose $\omega_0\in\Omega^0(\Sigma,\ad(P))$ for reference and set $v^{\nu}:=\omega^{\nu}-\omega_0$. By \eqref{eq:ExpansionEYM} and \eqref{eq:locslicecond} each $\xi^{\nu}$ satisfies the equation
\begin{eqnarray*} 
(\nabla_s+B_{(A_0,\omega_0,0)})\xi^{\nu}=-\mathcal F(A_0,\omega_0,0)-Q_{\omega_0}\xi^{\nu}+(X_f(A^{\nu},\omega^{\nu}),-Y_f(A^{\nu},\omega^{\nu}),0).
\end{eqnarray*}
The last three terms on the right-hand side of this equation are uniformly bounded in $W^{1,\frac{3}{2}}(I\times\Sigma)$. Namely, the map $Q_{\omega_0}\colon W^{1,2}(I\times\Sigma)\to W^{1,\frac{3}{2}}(I\times\Sigma)$ maps bounded sets to bounded sets as follows from Sobolev multiplication and embedding theorems, and $\xi^{\nu}$ satisfies a uniform bound in $W^{1,2}(I\times\Sigma)$. The same holds true for the term $(X_f(A^{\nu},\omega^{\nu}),-Y_f(A^{\nu},\omega^{\nu}),0)$ by a standard estimate, cf.~\cite[Proposition D.1 (vi)]{SalWeh}. The statement of the proposition now follows inductively from a bootstrap argument based on ellipticity of the linear operator $\nabla_s+B_{(A_0,\omega_0,0)}$.
\end{proof}

The following theorem states compactness of moduli spaces up to convergence to broken trajectories. The notion of $a$-regular perturbations is introduced in Definition \ref{def:regularorbit} below. We refer to \textsection \ref{subsect:pertcritpt} for explanation of some of the subsequently used notations.

\begin{thm}\label{thm:compactness}
Fix a number $a>0$ and let $f$ be an $a$-regular perturbation. Let $A_-^{\nu}$ and $A_+^{\nu}$ be two sequences in $\crit(\J+h_f)$ which converge uniformly to some critical point $A^-$, respectively $A^+$ of $\J+h_f$. Let $(A^{\nu},\omega^{\nu},0)$ be a sequence in $\widehat{\mathcal M}_f(A_-^{\nu},A_+^{\nu})$ with bounded energy 
\begin{eqnarray}\label{eq:boundedenergyass}
\sup_{\nu}(\mathcal J(A^{\nu},\omega^{\nu},0)+h_f(A^{\nu},\omega^{\nu}))<a.
\end{eqnarray}
Then there is a subsequence, still denoted by $(A^{\nu},\omega^{\nu},0)$, finitely many critical points $B_0=A^-,\ldots,B_{\ell}=A^+\in\crit^a(\mathcal J+h_f)$ and connecting trajectories $(A_i,\omega_i,\Psi_i)\in\widehat{\mathcal M}_f(B_i,B_{i+1})$ for $i=0,\ldots,\ell-1$, such that $(A^{\nu},\omega^{\nu},0)$ converges to the broken trajectory 
\begin{eqnarray*}
((A_0,\omega_0,\Psi_0),\ldots,(A_{\ell-1},\omega_{\ell-1},\Psi_{\ell-1}))
\end{eqnarray*}
in the following sense. For every $i=0,\ldots,\ell-1$ there is a sequences $s_i^{\nu}\in\R$ and a sequence of gauge transformations $g_i^{\nu}\in\mathcal G(\R\times\Sigma)$ such that the $s$-dilated sequence $(g_i^{\nu})^{\ast}(A^{\nu}(\,\cdot\,+s_i^{\nu}),\omega^{\nu}(\,\cdot\,+s_i^{\nu}),0)$ converges uniformly to $(A_i,\omega_i,\Psi_i)$ on compact subsets of $\R\times\Sigma$.  
\end{thm}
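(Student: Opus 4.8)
The plan is to follow the standard scheme for compactness of Morse moduli spaces up to broken trajectories, adapted to the present elliptic gauge flow; the three ingredients are unconditional local compactness of \eqref{pertEYM1}, a uniform energy bound together with a positive energy gap, and a ``no loss of energy in the necks'' estimate. The starting point is the energy identity: for any finite-energy solution $(A,\omega,\Psi)$ of \eqref{pertEYM1} connecting $(A^-,\omega^-)$ to $(A^+,\omega^+)$ one integrates $\frac{d}{ds}(\J+h_f)(A(s),\omega(s))=-\|\nabla(\J+h_f)(A(s),\omega(s))\|_{L^2(\Sigma)}^2$ over $\R$ to express $E_f(A,\omega,\Psi)$ through the difference $(\J+h_f)(A^-,\omega^-)-(\J+h_f)(A^+,\omega^+)$ (legitimate in temporal gauge, and gauge invariant in general). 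Since $\J+h_f$ is bounded below on $\crit(\J+h_f)$ (there it equals $\YM$ plus bounded perturbation terms), the bound \eqref{eq:boundedenergyass} and the convergences $A_\pm^\nu\to A^\pm$ give $\sup_\nu E_f(A^\nu,\omega^\nu,0)\le C<\infty$, hence $\|e^\nu\|_{L^1(I\times\Sigma)}\le C$ on every compact $I\subseteq\R$, which is exactly the hypothesis of Proposition \ref{prop:locunifconvergence}.

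First I would extract a global limit: applying Proposition \ref{prop:locunifconvergence} on the exhausting intervals $I_k=[-k,k]$ and diagonalizing --- patching the gauge transformations across the $I_k$ in the usual way --- yields, after a subsequence and gauge transformations $g^\nu\in\G(\R\times P)$, a smooth solution $(A_0,\omega_0,\Psi_0)$ of \eqref{pertEYM1} with $(g^\nu)^{\ast}(A^\nu,\omega^\nu,0)\to(A_0,\omega_0,\Psi_0)$ in $C^\infty_{\mathrm{loc}}(\R\times\Sigma)$; in particular no interior bubbling occurs, since Proposition \ref{prop:locunifconvergence} requires only an $L^1$ energy bound. Lower semicontinuity of $E_f$ gives $E_f(A_0,\omega_0,\Psi_0)\le C$, so by the exponential decay result of \textsection\ref{sect:exponentialdecay} (Theorem \ref{thm:mainexpconvergence}) there are critical points $C^-,C^+$ of $\J+h_f$ with $(A_0,\omega_0,\Psi_0)\in\widehat{\mathcal M}_f(C^-,C^+)$. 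To control the breaking I would next observe that, $f$ being $a$-regular, all critical points of $\J+h_f$ are Morse--Bott nondegenerate, hence isolated modulo gauge; combined with Uhlenbeck compactness of the set of Yang--Mills connections of energy $\le a$ (the perturbation being a compact lower-order term) this makes $\crit^a(\J+h_f)/\G(P)$ finite, so the set of critical values of $\J+h_f$ in $(-\infty,a]$ is finite. Let $\hbar>0$ be the least positive difference of two such values; by the energy identity, every non-constant connecting trajectory below level $a$ has energy $\ge\hbar$.

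Then comes the recentering. If $[C^-]\neq[A^-]$ a positive amount of energy escapes to $s=-\infty$: choosing $\sigma^\nu\to-\infty$ so that the shifted sequence $(A^\nu(\,\cdot\,+\sigma^\nu),\omega^\nu(\,\cdot\,+\sigma^\nu),0)$ still has locally bounded energy density but now captures the escaping energy, and rerunning the previous step, I extract a new trajectory piece whose right limit is $C^-$ (the matching of limits being guaranteed by the neck estimate below); symmetrically at $s=+\infty$ if $[C^+]\neq[A^+]$. Each new piece carries energy $\ge\hbar$ while the energies of all extracted pieces sum to $\lim_\nu E_f(A^\nu,\omega^\nu,0)\le C$, so the procedure terminates after at most $\lfloor C/\hbar\rfloor$ iterations; relabelling the pieces in order gives critical points $B_0=A^-,B_1,\dots,B_\ell=A^+$ and $(A_i,\omega_i,\Psi_i)\in\widehat{\mathcal M}_f(B_i,B_{i+1})$, together with shift sequences $s_i^\nu$ and gauge transformations $g_i^\nu$ realizing the asserted convergence of $(g_i^\nu)^{\ast}(A^\nu(\,\cdot\,+s_i^\nu),\omega^\nu(\,\cdot\,+s_i^\nu),0)$ to $(A_i,\omega_i,\Psi_i)$ on compact subsets of $\R\times\Sigma$.

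The main obstacle is the neck estimate that makes this rigorous: a long-cylinder estimate asserting the existence of $\varepsilon_0>0$, $\delta>0$ and $c>0$ such that any solution of \eqref{pertEYM1} on $[-T,T]\times\Sigma$ with $T\ge1$ and $\int_{[-T,T]\times\Sigma}e(A,\omega,\Psi)\,\dvol<\varepsilon_0$ stays, modulo gauge, $C^0$-close to a single critical point of $\J+h_f$ on $[-T/2,T/2]\times\Sigma$ and satisfies $\int_{[-T/3,T/3]\times\Sigma}e(A,\omega,\Psi)\,\dvol\le c\,e^{-\delta T}\int_{[-T,T]\times\Sigma}e(A,\omega,\Psi)\,\dvol$. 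I would prove this by combining the a priori $C^k$ bounds of Proposition \ref{prop:locunifconvergence} with the exponential convergence estimates of \textsection\ref{sect:exponentialdecay} applied on cylinders; granting it, a refined choice of subsequence arranges that off finitely many bounded windows the energy density of $(A^\nu,\omega^\nu,0)$ is $<\varepsilon_0$ on arbitrarily long cylinders, so that no energy is lost in the limit, consecutive pieces share endpoints, and the chain connects $A^-$ to $A^+$. I expect essentially all of the analytic effort to sit in this last step --- the $\varepsilon$-regularity and long-cylinder decay, and the energy bookkeeping around it --- while the extraction and termination are formal once the estimates of \textsection\ref{sect:compactness} and \textsection\ref{sect:exponentialdecay} are available.
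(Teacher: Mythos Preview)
Your proposal is correct and follows essentially the same approach as the paper: verify the uniform $L^1$ energy bound via the energy identity, invoke Proposition~\ref{prop:locunifconvergence} for local $C^\infty$ compactness modulo gauge, then run the standard broken-trajectory extraction. The paper's own proof is considerably more terse --- it checks the hypothesis of Proposition~\ref{prop:locunifconvergence} and then simply defers the recentering, energy-gap, and neck arguments to ``standard arguments, which e.g.~can be found in \cite[\textsection 7]{SalWeh}'' --- whereas you have explicitly unpacked what those standard arguments consist of.
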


\begin{proof}
We first note that our assumptions imply for every compact interval $I\subseteq\R$ the existence of a constant $C>0$ such that uniform energy bound \eqref{eq:unifenergydensity} is satisfied. This follows for $I=\R$ (and hence for every subinterval $I$) from \eqref{eq:boundedenergyass} and the identity
\begin{multline*}
\int_{-\infty}^{\infty}\|e^{\nu}(s)\|_{L^2(\Sigma)}^2\,ds=\\\J(A_-^{\nu},\omega_-^{\nu},0)-\J(A_+^{\nu},\omega_+^{\nu},0)+h_f(A_-^{\nu},\omega_-^{\nu})-h_f(A_+^{\nu},\omega_+^{\nu}).
\end{multline*}
Hence the restriction of $(A^{\nu},\omega^{\nu},0)$ to each compact interval $I\subseteq\R$ satisfies the assumptions of Proposition \ref{prop:locunifconvergence}. Therefore, after passing to a subsequence and modification by gauge transformations it follows that $(A^{\nu}|_I,\omega^{\nu}|_I,0)$ converges uniformly to a limit $(A^{\ast}|_I,\omega^{\ast}|_I,\Psi^{\ast}|_I)$, which again satisfies Eq.~\eqref{pertEYM1} on $I\times\Sigma$. The remaining parts of the statement now follow from standard arguments, which e.g.~can be found in \cite[\textsection 7]{SalWeh}. 
\end{proof}

\section{Exponential decay}\label{sect:exponentialdecay}

Throughout we fix a constant $a>0$.

\begin{prop}[Stability of injectivity]\label{expconv:lemmaest}
Let $h_f$ be an $a$-regular perturbation in the sense of Definition \ref{def:regularorbit}. Then there are positive constants $\tilde\delta$ and $c$ such that the following holds. Let $(A,\omega) \in \A(P)\times \Omega^0(\Sigma,\ad(P))$ with $\J(A,\omega)<a$ satisfy 
\begin{equation}\label{expconv:lemmaest:linf}
\|\ast d_A\omega+X_f(A,\omega)\|_{L^\infty(\Sigma)}+\|*F_A-\omega+Y_f(A,\omega)\|_{L^\infty(\Sigma)}\leq \tilde\delta.
\end{equation}
Then for every $\xi=(\alpha,v,\psi)$ of class $C^1$, where $\alpha\in\Omega^1(\Sigma,\ad(P))$ and $v,\psi\in\Omega^0(\Sigma,\ad(P))$, it holds that 
\begin{eqnarray}\label{expconv:lemmaest:linf2}
 \|\xi\|_{L^2(\Sigma)}^2=\|\alpha\|_{L^2(\Sigma)}^2+\|v\|_{L^2(\Sigma)}^2+\|\psi\|_{L^2(\Sigma)}^2\\
\nonumber\leq c\big (\left\|*d_Av+d_A\psi-*[\omega\wedge\alpha]+\d\!X_f(A,\omega)(\alpha,v)\right\|^2_{L^2(\Sigma)}\\
\nonumber+\left\|*d_A\alpha-v-[\omega\wedge \psi]+\d\!Y_f(A,\omega)(\alpha,v)\right\|^2_{L^2(\Sigma)}+\left\|d_A^*\alpha-*[\omega\wedge * v]\right\|^2_{L^2(\Sigma)}\big).
\end{eqnarray}
(This estimate shows stable invertibility of the Hessian of the perturbed functional $\mathcal J+h_f$ near critical points, cf.~\eqref{linoperator}).
\end{prop}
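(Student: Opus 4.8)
The plan is to prove the estimate by contradiction, using a compactness argument together with the fact that at an exact critical point the Hessian of $\J+h_f$ is invertible modulo gauge (which is exactly the content of $a$-regularity of $h_f$). Suppose the estimate fails. Then there are sequences $(A^\nu,\omega^\nu)$ with $\J(A^\nu,\omega^\nu)<a$, satisfying $\|*d_{A^\nu}\omega^\nu+X_f(A^\nu,\omega^\nu)\|_{L^\infty}+\|*F_{A^\nu}-\omega^\nu+Y_f(A^\nu,\omega^\nu)\|_{L^\infty}\to 0$, and $\xi^\nu=(\alpha^\nu,v^\nu,\psi^\nu)$ of class $C^1$ with $\|\xi^\nu\|_{L^2(\Sigma)}=1$ but with the right-hand side of \eqref{expconv:lemmaest:linf2} tending to zero. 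The goal is then to extract a limit that violates $a$-regularity.

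First I would establish a priori control on the configurations. Since $\J(A^\nu,\omega^\nu)<a$ and the almost-critical-point condition \eqref{expconv:lemmaest:linf} holds with $\tilde\delta$ small, one gets a uniform $L^\infty$ bound on $\omega^\nu$ (from $\omega^\nu = *F_{A^\nu}+(\text{small}+Y_f)$ together with the bound on $Y_f$ from \cite[Prop.~D.1]{SalWeh}), hence a uniform bound on $\|F_{A^\nu}\|_{L^\infty}$. By Uhlenbeck compactness and the local slice theorem, after applying gauge transformations and passing to a subsequence, $A^\nu$ converges weakly in $W^{1,p}$ (any $p$) to some limiting connection $A^\infty$, and $\omega^\nu$ converges to $\omega^\infty$; the limit $(A^\infty,\omega^\infty)$ is a critical point of $\J+h_f$ with $\J(A^\infty,\omega^\infty)\le a$. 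Next I would run the linear estimate: the vanishing of the right-hand side of \eqref{expconv:lemmaest:linf2}, together with the gauge-fixing equation $d_{A^\nu}^*\alpha^\nu-*[\omega^\nu\wedge *v^\nu]\to 0$, gives via a standard elliptic estimate for the operator $B_f$ (this is the same operator that appears in \eqref{linoperator}, and the elliptic estimate is the one invoked in step (ii) of the $p=2$ Fredholm argument) a uniform $W^{1,2}(\Sigma)$ bound on $\xi^\nu$. By Rellich, a subsequence converges strongly in $L^2$ to a limit $\xi^\infty$ with $\|\xi^\infty\|_{L^2}=1$, and passing to the limit in the three equations shows that $\xi^\infty$ lies in the kernel of the Hessian $B_f$ of $\J+h_f$ at the critical point $(A^\infty,\omega^\infty)$, augmented by the gauge-fixing condition. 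But $a$-regularity of $h_f$ says precisely that this augmented Hessian is injective, so $\xi^\infty=0$, contradicting $\|\xi^\infty\|_{L^2}=1$.

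The step I expect to be the main obstacle is verifying that the limit $(A^\infty,\omega^\infty)$ genuinely lands in the class of critical points covered by the $a$-regularity hypothesis, i.e.~that no energy or regularity is lost in the weak limit. The uniform $L^\infty$ bound on the curvature is the crucial input that rules out bubbling (there is no energy concentration in dimension two under an $L^\infty$ curvature bound), so Uhlenbeck's theorem applies cleanly; but one must be careful that the almost-critical-point condition \eqref{expconv:lemmaest:linf} passes to the limit to force $(A^\infty,\omega^\infty)$ to be a true critical point, and that $\J$ is continuous enough along the sequence to preserve the sublevel-set condition $\J<a$. A secondary technical point is handling the holonomy perturbation terms $\d\!X_f, \d\!Y_f$ uniformly: here one uses that these depend smoothly on $(A,\omega)$ with derivative bounds uniform on $W^{1,2}$-bounded (hence $C^0$-bounded) sets, which is exactly what the estimates of \cite[Proposition D.1]{SalWeh} provide, so that $\d\!X_f(A^\nu,\omega^\nu)(\alpha^\nu,v^\nu)\to \d\!X_f(A^\infty,\omega^\infty)(\alpha^\infty,v^\infty)$ in $L^2$ along the subsequence.
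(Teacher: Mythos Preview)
Your proposal is correct and follows essentially the same route as the paper: argue by contradiction, use the energy bound together with the $L^\infty$ almost-critical condition to get uniform curvature bounds, apply Uhlenbeck compactness and pass to a subsequence converging to a genuine critical point $(A^\infty,\omega^\infty)$ below level $a$, and then invoke $a$-regularity to reach a contradiction.

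The only difference is one of presentation. The paper stops once it knows the limit is a nondegenerate critical point and simply asserts that the estimate \eqref{expconv:lemmaest:linf2} then holds at $(A^\infty,\omega^\infty)$ with some finite $c$, which contradicts the assumed failure for $c_\nu=\nu$. You instead normalize $\|\xi^\nu\|_{L^2}=1$, use the elliptic estimate for $B_f$ plus Rellich to extract a nonzero $L^2$-limit $\xi^\infty$ in the kernel of the augmented Hessian at $(A^\infty,\omega^\infty)$, and contradict injectivity directly. Your version makes the final step more explicit (and in particular makes transparent why the convergence of the perturbation terms $\d X_f,\d Y_f$ is needed), but the underlying argument is the same compactness-plus-nondegeneracy scheme.
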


\begin{proof}
We assume by contradiction that there is  $(A,\omega)$ as in the statement of the proposition but \eqref{expconv:lemmaest:linf2} fails to be true for every $c>0$. Then there is a sequence $(A_\nu,\omega_\nu)\in \A(P)\times \Omega^0(\Sigma,\ad(P))$ and a nullsequence $\delta_\nu\to 0$ such that 
\begin{equation}\label{eq:nullsequence}
\|\ast d_{A_\nu}\omega_\nu+X_f(A^{\nu},\omega^{\nu})\|_{L^\infty(\Sigma)}+\|*F_{A_\nu}-\omega_\nu+Y_f(A^{\nu},\omega^{\nu})\|_{L^\infty(\Sigma)}\leq \delta_\nu
\end{equation}
and inequality \eqref{expconv:lemmaest:linf2} does not hold for the constant $c_\nu=\nu$. For any $(A,\omega)$ we obtain from the definition of $\J$ and the Cauchy--Schwartz inequality that
\begin{multline*}
\frac{1}{2} \|\omega\|_{L^2(\Sigma)}^2= \J(A,\omega)-\int_{\Sigma}\langle*\omega \wedge (F_A-*\omega)\rangle\\
\leq  a+ \frac 14 \|\omega\|_{L^2(\Sigma)}^2+\|*\omega-F_A\|_{L^2(\Sigma)}^2.
\end{multline*}
This inequality together with  \eqref{expconv:lemmaest:linf} shows that $\omega_\nu$ and $F_{A_\nu}$ are uniformly bounded in the $L^2$-norm. Again by  \eqref{expconv:lemmaest:linf}, $d_{A_{\nu}}\omega_{\nu}=\nabla_{A_{\nu}}\omega_{\nu}$ is uniformly bounded in $L^{\infty}$. Hence altogether, the sequence $\omega_{\nu}$ is uniformly bounded in $W^{1,\infty}$. Furthermore, \eqref{expconv:lemmaest:linf} shows that $F_{A_\nu}$ is uniformly bounded in $L^{\infty}$ and thus by Uhlenbeck's weak compactness theorem (cf.~\cite[Theorem A]{Wehrheim}) we can assume that (after modifying by suitable gauge transformations) the sequence $(A_\nu)$ has a weakly convergent subsequence in $\A^{1,p}(P)$ for any fixed $2<p<\infty$. Therefore, there is a subsequence of $(A_\nu,\omega_\nu)$ which converges weakly in $W^{1,p}$ and strongly in $C^0$ to a limit $(A_{\ast},\omega_{\ast})$. It follows from \eqref{eq:nullsequence} that $(A_{\ast},\omega_{\ast})$ is a critical point of $\J+h_f$. Also, $\J(A_{\ast},\omega_{\ast})\leq a$. Hence \eqref{expconv:lemmaest:linf2} holds for $(A_{\ast},\omega_{\ast})$ and some constant $c>0$, because the critical points below level $a$ are not degenerate. The existence of such a finite constant $c$ contradicts our above assumption. The assertion now follows.
\end{proof}

\begin{thm}\label{thm:L2expconv}
Let $h_f$ be an $a$-regular perturbation in the sense of Definition \ref{def:regularorbit}. Then there exist positive constants $\delta$, $\tilde\delta$, and $c$ such that the following holds. Let $(A,\omega,\Psi)$ satisfy \eqref{pertEYM1} on $\R\times\Sigma$  such that $\J(A(s),\omega(s))<a$ for all $s\in\R$. Assume that there exists $s_0>0$ such that $(A,\omega,\Psi)$ satisfies 
\begin{equation}\label{expconv:linf}
\|\partial_sA(s)-d_{A(s)}\Psi(s)\|_{L^\infty(\Sigma)}+\|\nabla_s\omega(s)\|_{L^\infty(\Sigma)}\leq \tilde\delta
\end{equation}
for all $|s|>s_0$. Then for every $\xi=(\alpha,v,\psi)$ of class $C^2$, where $\alpha(s)\in\Omega^1(\Sigma,\ad(P))$ and $v(s),\psi(s)\in\Omega^0(\Sigma,\ad(P))$, which satisfies $\mathcal D_{(A,\omega,\Psi)}\xi=0$ and does not diverge as $s\to\pm\infty$, it follows that
\begin{equation} \label{expcov:ineq_f}
\|\xi(s)\|_{L^2(\Sigma)}\leq c e^{-\delta |s|}
\end{equation}
for all $s>s_0$.
\end{thm}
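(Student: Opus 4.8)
The plan is to derive a differential inequality of the form $\ddot{F}(s) \geq \delta^2 F(s)$ (or rather $\geq c F(s)$) for the energy-type quantity $F(s) := \tfrac12\|\xi(s)\|_{L^2(\Sigma)}^2$, and then invoke the standard convexity lemma: a nonnegative bounded function satisfying such an inequality on $(s_0,\infty)$ decays like $e^{-\delta|s|}$. The crucial input is the stability estimate of Proposition \ref{expconv:lemmaest}, which under hypothesis \eqref{expconv:linf} is applicable for $|s| > s_0$ (after shrinking $\tilde\delta$), and which controls $\|\xi(s)\|_{L^2(\Sigma)}^2$ by the $L^2$-norm of the spatial part of the operator applied to $\xi$, i.e. by $\|B_f(s)\xi(s)\|_{L^2(\Sigma)}^2$ together with the gauge-fixing component.

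First I would compute $\dot{F}(s) = \langle \nabla_s\xi(s),\xi(s)\rangle$ and then
\begin{equation*}
\ddot{F}(s) = \|\nabla_s\xi(s)\|_{L^2(\Sigma)}^2 + \langle \nabla_s^2\xi(s),\xi(s)\rangle.
\end{equation*}
Since $\mathcal D_{(A,\omega,\Psi)}\xi = 0$ reads $\nabla_s\xi = -B_f(s)\xi$, we get $\nabla_s^2\xi = -\dot{B}_f(s)\xi - B_f(s)\nabla_s\xi = -\dot B_f(s)\xi + B_f(s)^2\xi$ (using that $B_f(s)$ is symmetric and $\nabla_s$ interacts with $\nabla_A$ through curvature terms which are controlled by \eqref{expconv:linf}). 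Substituting yields
\begin{equation*}
\ddot{F}(s) = \|B_f(s)\xi(s)\|_{L^2(\Sigma)}^2 + \|B_f(s)\xi(s)\|_{L^2(\Sigma)}^2 - \langle \dot B_f(s)\xi(s),\xi(s)\rangle + (\text{curvature error terms}),
\end{equation*}
so modulo error terms $\ddot F(s) \geq 2\|B_f(s)\xi(s)\|_{L^2(\Sigma)}^2 - \langle\dot B_f(s)\xi,\xi\rangle$. The term $\langle\dot B_f(s)\xi,\xi\rangle$ is bounded by $\|\dot B_f(s)\|\cdot\|\xi(s)\|^2$, and $\|\dot B_f(s)\|$ is small for $|s|>s_0$ because $\dot B_f$ depends linearly on $(\partial_s A, \nabla_s\omega)$ which are $\tilde\delta$-small by \eqref{expconv:linf}; likewise the curvature error terms carry a factor of $(\partial_s A - d_A\Psi)$ and are small. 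The gauge-fixing component of $B_f(s)\xi$ is $d_A^*\alpha - *[\omega\wedge *v]$, which is exactly the third term on the right of \eqref{expconv:lemmaest:linf2}. Hence by Proposition \ref{expconv:lemmaest}, $\|B_f(s)\xi(s)\|_{L^2(\Sigma)}^2 \geq \tfrac1c\|\xi(s)\|_{L^2(\Sigma)}^2 = \tfrac2c F(s)$ after absorbing the $\d\!X_f,\d\!Y_f$ perturbation terms which are already built into the left-hand side of that estimate. Combining, we obtain $\ddot F(s) \geq (\tfrac{4}{c} - \eta)F(s) \geq \delta^2 F(s)$ for $|s|>s_0$, where $\eta$ is made small by choosing $\tilde\delta$ small, and $\delta := \sqrt{4/c - \eta} > 0$.

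Finally, I would invoke the elementary lemma (e.g. as in \cite[Lemma D.3]{SalWeh} or by the substitution $u = F - \text{sup}$-comparison with $e^{\pm\delta s}$): if $F \geq 0$ is $C^2$ on $(s_0,\infty)$, does not diverge, and satisfies $\ddot F \geq \delta^2 F$, then $F(s) \leq F(s_0)e^{-\delta(s-s_0)}$ for $s \geq s_0$; the non-divergence of $\xi$ rules out the growing solution. Taking square roots gives \eqref{expcov:ineq_f} with an adjusted constant $c$. The argument for $s < -s_0$ is symmetric under $s\mapsto -s$. The main obstacle I anticipate is bookkeeping the error terms carefully — in particular verifying that $\nabla_s^2\xi = B_f(s)^2\xi - \dot B_f(s)\xi$ up to terms controlled by \eqref{expconv:linf} (this requires commuting $\nabla_s$ past the covariant spatial derivatives inside $B_f$, producing curvature contributions $F_{\mathbb A}$ whose $ds$-component is precisely $\partial_sA - d_A\Psi + \text{small}$) — and checking that all such terms can be absorbed into $\eta F(s)$ by shrinking $\tilde\delta$, so that the coercivity constant from Proposition \ref{expconv:lemmaest} survives with room to spare.
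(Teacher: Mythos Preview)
Your proposal is correct and follows essentially the same strategy as the paper: define $F(s)=\tfrac12\|\xi(s)\|_{L^2}^2$, use $\mathcal D_{(A,\omega,\Psi)}\xi=0$ to compute $\ddot F$ in terms of $\|B_f\xi\|^2$ plus error terms controlled by \eqref{expconv:linf}, invoke Proposition~\ref{expconv:lemmaest} to obtain $\ddot F\geq\delta^2 F$, and conclude via the standard convexity/ODE argument. The only difference is presentational: the paper carries out the computation of $\langle\xi,\nabla_s^2\xi\rangle$ explicitly component by component (expanding each of $\nabla_s^2\alpha$, $\nabla_s^2 v$, $\nabla_s^2\psi$ and tracking the commutators $[\nabla_s,d_A]$, $[\nabla_s,\ast d_A]$, $[\nabla_s,d_A^\ast]$ individually), whereas you phrase the same calculation in operator form as $\nabla_s^2\xi=B_f^2\xi-\dot B_f\xi+(\text{curvature errors})$; the error terms you anticipate are exactly the commutator terms the paper writes out.
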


\begin{proof}
For fixed $\xi=(\alpha,v,\psi)$ as in the assumptions let us consider the function
\begin{eqnarray*}
f\colon\R\to\R,\qquad s\mapsto\frac{1}{2}\Big(\|\alpha(s)\|_{L^2(\Sigma)}^2+\|v(s)\|_{L^2(\Sigma)}^2+\|\psi(s)\|_{L^2(\Sigma)}^2\Big).
\end{eqnarray*}
We claim that $f$ satisfies the differential inequality
\begin{equation}\label{flow:proof:eqfdsjapo}
f''(s)\geq \delta^2 f(s)
\end{equation}
for a constant $\delta>0$ and all $s\geq 1$. Assuming this claim for a moment it implies that $f$ decays exponentially as $s\to\pm\infty$. Namely, since for $s\geq1$
\begin{equation*}
\frac{d}{ds}\left( e^{-\delta s}\left(f'(s)+\delta f(s)\right)\right)= e^{-\delta s}\left(f''(s)-\delta^2 f(s)\right) \geq 0,
\end{equation*} 
it follows that $f'(s)+\delta f(s)<0$. Otherwise the function $s\mapsto e^{-\delta s}\left(f'(s)+\delta f(s)\right)$ would for some $s_0\geq1$ be nonnegative. Since it is increasing it would then be nonnegative for all $s\geq s_0$. Thus, since $f(s)$ is by assumption bounded, $s\mapsto e^{-\delta s }f(s)$ would decrease and hence $f'(s)$ increase for $s\geq s_0$ sufficiently large. Therefore $f(s)$ would be unbounded which is a contradiction and hence $f'(s)+\delta f(s)<0$. Therefore, if the function $f$ satisfies \eqref{flow:proof:eqfdsjapo}, then
\begin{equation*}
f(s)\leq c_1e^{-\delta s }
\end{equation*}
for a suitable constant $c_1>0$. We now prove \eqref{flow:proof:eqfdsjapo}. Differentiating $f$ twice with respect to $s$, we obtain
\begin{equation}\label{eq:expconv1}
\begin{split}
f''=&\|\nabla_s\alpha\|_{L^2(\Sigma)}^2+\|\nabla_sv\|_{L^2(\Sigma)}^2+\|\nabla_s\psi\|_{L^2(\Sigma)}^2+ \langle\alpha,\nabla_s^2\alpha\rangle+\langle v,\nabla_s^2 v\rangle+\langle\psi,\nabla_s^2\psi\rangle \\
\geq&\|\nabla_s\alpha\|_{L^2(\Sigma)}^2+\|\nabla_sv\|_{L^2(\Sigma)}^2+\|\nabla_s\psi\|_{L^2(\Sigma)}^2+\frac{1}{2}\|d_A^{\ast}\alpha-[\omega\wedge v]\|_{L^2(\Sigma)}^2\\
&+\frac{1}{2}\|\ast d_A\alpha-v-[\omega\wedge\psi]+\d\!Y\|_{L^2(\Sigma)}^2+\frac{1}{2}\|\ast d_Av+d_A\psi+\d\!X-\ast[\omega\wedge\alpha]\|_{L^2(\Sigma)}^2\\
&-c\tilde\delta\left(\|\alpha\|_{L^2(\Sigma)}^2+\|v\|_{L^2(\Sigma)}^2+\|\psi\|_{L^2(\Sigma)}^2\right),
\end{split}
\end{equation}
where the second equality follows from  the computation  below. To obtain the desired inequality \eqref{flow:proof:eqfdsjapo} we apply \eqref{expconv:lemmaest:linf2} after choosing $\tilde\delta>0$ still smaller if necessary. For abbreviation we  set
\begin{eqnarray*}
\d\!X\coloneqq\d\!X_f(A,\omega)(\alpha,v)\qquad\textrm{and}\qquad\d\!\dot X=\nabla_s(\d\!X)-\d\!X_f(A,\omega)(\dot\alpha,\dot v)
\end{eqnarray*}
(and likewise for $\d\!Y_f(A,\omega)(\alpha,v)$) in the following calculation. Using at several places the assumption that $\mathcal D_{(A,\omega,\Psi)}\xi=0$, we compute
\begin{equation*}\label{eq:expconv2}
\begin{split}
\langle\alpha,\nabla_s^2\alpha\rangle&+\langle v,\nabla_s^2 v\rangle+\langle\psi,\nabla_s^2\psi\rangle\\
=&\langle \alpha,\nabla_s\left(-*[\omega\wedge\alpha]+\ast d_Av+d_A\psi+\d\!X \right)\rangle\\
&+\langle v, \nabla_s \left(-*d_A\alpha+v+[\omega\wedge\psi]-\d\!Y \right) \rangle
+\langle \psi,\nabla_s\left( d_A^*\alpha-\ast[\omega\wedge *v]\right)\rangle\\
=&\langle\alpha,\ast d_A\nabla_sv+[\nabla_s,\ast d_A]v\rangle+\langle \alpha, d_A\nabla_s\psi+[\nabla_s,d_A]\psi\rangle\\
&-\langle\alpha,*[\nabla_s\omega\wedge\alpha]+*[\omega\wedge\nabla_s\alpha]\rangle
+\langle\alpha,\nabla_s\d\!X \rangle-\langle v,*d_A\nabla_s\alpha+[\nabla_s,*d_A]\alpha\rangle\\
&+\langle v,\nabla_s v\rangle+\langle v , [\nabla_s\omega\wedge\psi]+[\omega\wedge\nabla_s\psi]\rangle+\langle v,\nabla_s\d\!Y \rangle\\
&+\langle\psi,d_A^*\nabla_s\alpha+[\nabla_s,d_A^*]\alpha\rangle
-\langle\psi,*[\nabla_s\omega\wedge*v]+*[\omega\wedge*\nabla_sv]\rangle\\
=&\langle\ast d_A\alpha,\ast d_A\alpha-v-[\omega\wedge\psi]+\d\!Y\rangle+\langle\alpha,[\nabla_s,\ast d_A]v\rangle\\
&+\langle d_A^{\ast}\alpha,d_A^{\ast}\alpha-[\omega\wedge v]\rangle+\langle\alpha,[\nabla_s,d_A]\psi\rangle\\
&-\langle\alpha,\ast[\nabla_s\omega\wedge\alpha]+\ast[\omega\wedge\nabla_s\alpha]\rangle+\langle\alpha,\nabla_s\d\!X\rangle\\
&+\langle\ast d_Av,\ast d_Av-\ast[\omega\wedge\alpha]+d_A\psi+\d\!X\rangle-\langle v,[\nabla_s,\ast d_A]\alpha\rangle\\
&+\langle v,\nabla_s v\rangle+\langle v,[\nabla_s\omega\wedge\psi]+[\omega\wedge\nabla_s\psi]\rangle+\langle v,\nabla_s \d\!Y\rangle\\
&+\langle d_A\psi,d_A\psi+\ast d_Av+\d\!X-\ast[\omega\wedge\alpha]\rangle-\langle\psi,[\nabla_s,d_A^*]\alpha\rangle\\
&-\langle\psi,\ast[\nabla_s\omega\wedge\ast v]+\ast[\omega\wedge\ast\nabla_s v]\rangle\\
=&\|\ast d_Av+d_A\psi-\ast[\omega\wedge\alpha]+\d\!X\|^2+\|\ast d_A\alpha-v-[\omega\wedge\psi]+\d\!Y\|^2\\
&+\|d_A^{\ast}\alpha-[\omega\wedge v]\|^2+\langle\alpha,\dot X\rangle+\langle v,\dot Y\rangle\\
&+\langle\alpha,[\nabla_s,\ast d_A]v\rangle-\langle\psi,[\nabla_s,d_A^*]\alpha\rangle+\langle\alpha,[\nabla_s,d_A]\psi\rangle-\langle v,[\nabla_s,\ast d_A]\alpha\rangle\\
&-\langle\alpha,\ast[\nabla_s\omega\wedge\alpha]\rangle-\langle\psi,\ast[\nabla_s\omega\wedge\ast v]\rangle+\langle v,[\nabla_s\omega\wedge\psi]\rangle
\end{split}
\end{equation*}
Note that the terms appearing in the last three lines are bilinear expressions in $\alpha$, $v$ or $\psi$, which with the help of \eqref{expconv:linf} can be bounded through
\begin{eqnarray*}
c\tilde\delta\left(\|\alpha\|_{L^2(\Sigma)}^2+\|v\|_{L^2(\Sigma)}^2+\|\psi\|_{L^2(\Sigma)}^2\right)
\end{eqnarray*}
for some universal constant $c$. This completes the proof of inequality \eqref{eq:expconv1}. Thus, using Proposition \ref{expconv:lemmaest} and choosing $\tilde \delta$  still smaller if necessary we conclude that
\begin{equation}
\begin{split}
f''(s)\geq\delta^2\left(\|\alpha\|_{L^2(\Sigma)}^2+\|v\|_{L^2(\Sigma)}^2+\|\psi\|_{L^2(\Sigma)}^2\right)=\delta^2 f(s),
\end{split}
\end{equation}
and thus the claimed exponential convergence follows.  
\end{proof}

We can now state and prove the main result of this section.

\begin{thm}[Exponential decay]\label{thm:mainexpconvergence}
For a constant $a>0$, let $h_f$ be an $a$-regular perturbation of $\J$. Let $(A,\omega,\Psi)$ be a smooth solution of \eqref{pertEYM1} on $\R\times\Sigma$  such that $\limsup_{s\in\R}\J(A(s),\omega(s),\Psi(s))<a$ and suppose that
\begin{eqnarray}\label{eq:finiteenergycond}
\int_{\R}\int_{\Sigma}|\partial_sA(s,z),\partial_s\omega(s,z),\partial_s\Psi(s,z)|^2\,\operatorname{dvol}_{\Sigma}ds<\infty.
\end{eqnarray}
Then there are critical points $(A^{\pm},\omega^{\pm})$ of $\J+h_f$ such that $(A(s),\omega(s),\Psi(s))$ converges to $(A^{\pm},\omega^{\pm},0)$ as $s\to\pm\infty$. Moreover, there are constants $\delta>0$ and $C_k$, $k\in\N_0$, such that
\begin{eqnarray*}
\|(A^{\pm}-A(s),\omega^{\pm}-\omega(s))\|_{C^k([s-1,s+1]\times\Sigma)}\leq C_ke^{-\delta|s|}
\end{eqnarray*}
for every $|s|\geq1$ and $k\in\N_0$.
\end{thm}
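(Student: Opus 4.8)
The plan is to deduce Theorem~\ref{thm:mainexpconvergence} from the linear exponential decay estimate of Theorem~\ref{thm:L2expconv} together with Proposition~\ref{expconv:lemmaest} (stable invertibility of the Hessian) and the local compactness result Proposition~\ref{prop:locunifconvergence}. The overall strategy follows the classical pattern for gradient flows on (possibly non-compact) configuration spaces: first extract subsequential limits at $s\to\pm\infty$, identify them as critical points, upgrade to genuine (non-subsequential) convergence, and finally bootstrap the linear decay of $\partial_s(A,\omega,\Psi)$ into exponential $C^k$-decay of the flow line itself.

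First I would observe that the finite-energy hypothesis \eqref{eq:finiteenergycond} forces $\|e(A,\omega,\Psi)(s)\|_{L^2(\Sigma)}\to0$ along a sequence $s_\nu\to+\infty$, and in fact (after passing to temporal gauge, which we may do by a time-dependent gauge transformation without affecting the energy) that $\int_{s}^{s+1}\|e(A,\omega,0)(\sigma)\|_{L^2(\Sigma)}^2\,d\sigma\to0$ as $s\to+\infty$. Applying Proposition~\ref{prop:locunifconvergence} to the translated sequences $(A(\cdot+s_\nu),\omega(\cdot+s_\nu),0)$ on a fixed compact interval $I$, one obtains, after modification by gauge transformations, $C^k$-convergence on $I\times\Sigma$ to a limit which (by the vanishing of the energy density in the limit) is $s$-independent and satisfies the critical point equations \eqref{statEYM}; thus it is of the form $(A^+,\omega^+,0)$ with $\omega^+=\ast F_{A^+}$ and $A^+$ a critical point of $\J+h_f$. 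The same argument at $s\to-\infty$ produces $(A^-,\omega^-)$. A standard argument shows that the sublevel condition $\limsup_s\J(A(s),\omega(s))<a$ ensures these limits lie below level $a$, so the $a$-regularity of $h_f$ makes them nondegenerate.

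The next step is to promote subsequential convergence to full convergence and to verify the hypothesis \eqref{expconv:linf} of Theorem~\ref{thm:L2expconv}. Here one uses that along the flow $(\partial_s A-d_A\Psi,\nabla_s\omega)$ equals (up to the sign conventions of \eqref{pertEYM1}) the gradient $(\ast d_A\omega+X_f,\ast F_A-\omega+Y_f)$, whose $L^2(\Sigma)$-norm is nonincreasing in a neighbourhood of a nondegenerate critical point — or, more robustly, one invokes the fact that by \eqref{eq:finiteenergycond} this quantity is $L^2$ in $s$ and, being (after differentiating the flow once more and using Proposition~\ref{prop:locunifconvergence} for uniform bounds on higher derivatives) also controlled in $s$-derivative, must tend to $0$ as $s\to\pm\infty$, uniformly in the $C^0(\Sigma)$-norm by the elliptic/parabolic interior estimates for the second-order equation \eqref{eq:omegasecorder} and its analogue for $A$. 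In particular \eqref{expconv:linf} holds for $|s|>s_0$ with $s_0$ large. Setting $\xi(s):=(A(s)-A^+,\omega(s)-\omega^+,0)$ near $s=+\infty$, one checks that $\xi$ satisfies an equation of the form $\mathcal D_{(A,\omega,0)}\xi=\mathcal{O}(|\xi|^2)$ with quadratic remainder $Q$; absorbing the quadratic term for $|s|$ large (since $\|\xi(s)\|$ is already small) one is in a position to run the differential-inequality argument of Theorem~\ref{thm:L2expconv} directly on $\xi$, concluding $\|\xi(s)\|_{L^2(\Sigma)}\leq ce^{-\delta|s|}$.

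Finally, I would upgrade this $L^2$-in-space, exponential-in-$s$ decay to the $C^k([s-1,s+1]\times\Sigma)$ bound claimed in the theorem by elliptic bootstrapping: the pair $(A,\omega,\Psi=0)$ satisfies the elliptic system \eqref{pertEYM1} on the slab $[s-1,s+1]\times\Sigma$, $A^\pm$ is a solution on the same slab, so the difference satisfies an elliptic equation with coefficients bounded uniformly in $s$ (using the already-established uniform $C^k$-bounds from Proposition~\ref{prop:locunifconvergence}); iterating the local a priori estimate $\|\xi\|_{W^{k+1,p}(I_1\times\Sigma)}\leq c(\|\mathcal D\xi\|_{W^{k,p}(I_2\times\Sigma)}+\|\xi\|_{L^2(I_2\times\Sigma)})$, together with the quadratic structure of the nonlinearity and Sobolev embedding, converts the $L^2$-decay into $C^k$-decay with the same exponential rate (at the cost of shrinking $\delta$ and enlarging $C_k$). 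The main obstacle, as in all such arguments, is the passage from the \emph{a priori} possibly-only-subsequential convergence at the ends to genuine convergence together with the validity of the smallness condition \eqref{expconv:linf}: this is exactly where one must exploit that $\J+h_f$ behaves like a Morse function near its nondegenerate critical points, i.e.\ that the finite-energy condition plus nondegeneracy (via Proposition~\ref{expconv:lemmaest}) rules out the flow line lingering near the critical set without converging — a \L ojasiewicz-type or direct continuity argument. Everything else is routine elliptic regularity theory carried out uniformly in the non-compact $s$-direction.
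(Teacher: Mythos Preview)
Your overall strategy is sound, but it takes a different and more circuitous route than the paper, and one step does not quite go through as stated.

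The paper's proof is considerably shorter because it applies Theorem~\ref{thm:L2expconv} not to the difference $\xi=(A-A^+,\omega-\omega^+,0)$ but to $\xi=(\partial_sA,\partial_s\omega,\partial_s\Psi)$. Differentiating \eqref{pertEYM1} in $s$ shows that this $\xi$ satisfies the linearized equation $\mathcal D_{(A,\omega,\Psi)}\xi=0$ \emph{exactly}, so Theorem~\ref{thm:L2expconv} applies without any quadratic remainder to absorb. Exponential $L^2$-decay of $\partial_s(A,\omega,\Psi)$ then \emph{defines} the limits $(A^\pm,\omega^\pm)$ by integration, bypassing entirely the subsequence extraction via Proposition~\ref{prop:locunifconvergence} and the subsequent upgrade to full convergence. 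The smallness hypothesis \eqref{expconv:linf} is verified not by compactness but directly from Lemma~\ref{lem:pointwiseenergydesity} (the elliptic mean-value inequality for the energy density $e$), which converts the finite-energy condition \eqref{eq:finiteenergycond} into the required $L^\infty(\Sigma)$ smallness of $e$ for large $|s|$.

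Your choice of $\xi$ does not fit the hypotheses of Theorem~\ref{thm:L2expconv} as stated: that theorem linearizes along the moving flow line $(A,\omega,\Psi)$ and requires $\mathcal D_{(A,\omega,\Psi)}\xi=0$, whereas $(A-A^+,\omega-\omega^+,0)$ satisfies instead an equation of the form $(\partial_s+B_{(A^+,\omega^+,0)})\xi=Q(\xi)$ with a \emph{constant} Hessian and a quadratic error. To run your argument you would have to re-derive the differential inequality $f''\geq\delta^2 f$ in this inhomogeneous setting, which is possible but is extra work the paper avoids. The final bootstrapping step to $C^k$-decay is the same in both approaches.
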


\begin{proof}
We first verify that $(A,\omega,\Psi)$ meets the assumptions of Theorem \ref{thm:L2expconv}. Let some constant $\tilde\delta$ be given. Then \eqref{eq:finiteenergycond} implies that for any $\delta>0$ there exists $s_0>0$ such that for all $|s|\geq s_0$
\begin{eqnarray*}
\|e(A,\omega,\Psi)\|_{L^1([s-1,s+1]\times\Sigma)}=\frac{1}{2}\|(\partial_sA,\partial_s\omega,\partial_s\Psi)\|_{L^2([s-1,s+1]\times\Sigma)}^2\leq\delta,
\end{eqnarray*}
where $e(A,\omega,\Psi)$ denotes the gauge-invariant energy density as defined in \eqref{eq:endensity}. Lemma \ref{lem:pointwiseenergydesity} now yields the estimate
\begin{eqnarray*}
\|e(A(s),\omega(s),\Psi(s))\|_{L^{\infty}(\Sigma)}\leq C\delta,
\end{eqnarray*}
for every $|s|\geq s_0$ and some constant  $C>0$ which only depends on $\Sigma$. Therefore assumption \eqref{expconv:linf} of Theorem \ref{thm:L2expconv} is satisfied. We apply this theorem to $\xi=(\partial_sA,\partial_s\omega,\partial_s\Psi)$. This yields existence of the integral
\begin{eqnarray*}
(A^{\pm},\omega^{\pm},\Psi^{\pm})\coloneqq(A(s_0),\omega(s_0))+\int_{s_0}^{\pm\infty}(\partial_sA,\partial_s\omega,\partial_s\Psi)\,ds
\end{eqnarray*}
and exponential convergence in $L^2$ of $(A(s),\omega(s),\Psi(s))$ to $(A^{\pm},\omega^{\pm},\Psi^{\pm})$ as $s\to\pm\infty$. After applying a suitable time-dependent gauge transformation we may assume that $\Psi^{\pm}=0$. The claimed exponential convergence with respect to the $C^k$ norm for any $k\geq0$ may now be obtained by standard bootstrapping arguments. 
\end{proof}

\section{Transversality}\label{sect:transversality}

\subsection{Holonomy perturbations}\label{subsec:holonomypert}

To achieve transversality we introduce a perturbation scheme based on so-called holonomy perturbations. Such perturbations have been used in similar situations, cf.~\cite{Donaldson1,Floer, SalWeh,Taubes}. We adapt the construction of holonomy perturbations as carried out in \cite[Appendix D]{SalWeh} slightly in order to make it well suited in the situation at hand.\\
\noindent\\
In the following it will be convenient to identify the pair $(A,\omega)\in\A(P)\times\Omega^0(\Sigma,\ad(P))$ with the connection $\AA:=A+\omega\,dt$ on the principal $G$-bundle $P\times S^1$ over $\Sigma\times S^1$. We point out that both components $A$ and $\omega$ of $\AA$ do not depend on the second coordinate $t\in S^1$. We let $\tilde{\mathcal A}(P)\subseteq\mathcal A(P\times S^1)$ denote the set of such connections. The diagonal action of $\G(P)$ on $\A(P)\times\Omega^0(\Sigma,\ad(P))$ induces an action on $\tilde{\mathcal A}(P)$ given by $g^{\ast}\AA=g^{\ast}A+g^{-1}\omega g\,dt$. Let $\D\subseteq\C$ denote the closed unit disk. For an integer $m\geq1$ let $\Gamma_m$ denote the set of sequences $\gamma=(\gamma_1,\ldots,\gamma_m)$ of orientation preserving embeddings $\gamma_i\colon S^1\times\D\to \Sigma\times S^1$. Every $\gamma\in\Gamma_m$ gives rise to a map
\begin{eqnarray*}
\rho=(\rho_1,\ldots,\rho_m)\colon\D\times\tilde{\mathcal A}(P)\to G^{m}, 
\end{eqnarray*}
where $\rho_i(z,\AA)$ is the holonomy of the connection $\AA\in\tilde{\mathcal A}(P)$ along the loop $\theta\mapsto\gamma_i(\theta,z)$. Let $\mathcal F_m$ denote the space of real-valued smooth functions on $\D\times G^{m}$ invariant under the diagonal action by simultaneous conjugation of $G$ on $G^m$. Each pair $(\gamma,f)\in\Gamma_m\times\mathcal F_m$ determines a smooth function $h_f\colon\tilde{\mathcal A}(P)\to\R$ by
\begin{eqnarray}\label{eq:defholonpert}
h_f(\AA):=\int_{\D}f(z,\rho(z,\AA))\,dz.
\end{eqnarray}
The integrand is by construction invariant under the action of $\G(P)$ (in fact of the larger group $\G(P\times S^1)$) on the argument $\AA$. By the Riesz representation theorem, the differential $dh_f(\AA)(\alpha+v\,dt)$ in direction of $\alpha+v\,dt\in T_{\AA}\tilde{\mathcal A}(P)$ can be written in the form
\begin{eqnarray}\label{eq:gradienthf}
dh_f(\AA)(\alpha+v\,dt)=\int_{\Sigma}\langle-X_f(\AA)\wedge\ast\alpha\rangle+\langle Y_f(\AA)\wedge\ast v\rangle 
\end{eqnarray}
for uniquely defined $(X_f(\AA),Y_f(\AA))\in\Omega^1(\Sigma,\ad(P))\times\Omega^0(\Sigma,\ad(P))$. (For later convenience, we put a minus sign in front of $X_f(\AA)$). We define the {\bf{perturbed elliptic Yang--Mills flow}} to be the system of equations
\begin{eqnarray}\label{pertEYM}
\begin{cases}
0=\partial_sA-d_A\Psi-\ast d_A\omega-X_f(\AA),\\
0=\partial_s\omega+[\Psi,\omega]-\omega+\ast  F_A+Y_f(\AA).
\end{cases} 
\end{eqnarray}
Equations \eqref{pertEYM} are invariant under the action of time-dependent gauge transformations in $\G(P)$. Since in the following all statements are formulated in a gauge-invariant way we may always assume that $\Psi=0$.

\subsection{Perturbed critical point equation}\label{subsect:pertcritpt}

The above defined class of holonomy perturbations will turn out to be large enough to achieve surjectivity of the linearized operators $\mathcal D_{(A,\omega,\Psi)}$ along connecting trajectories $(A,\omega,\Psi)$. However, in order to achieve nondegeneracy of critical points of the perturbed functional $\J+h_f$ it is sufficient to work with the more restricted class of holonomy perturbations which depend on $A$ (but not on $\omega$). Such perturbations are maps $h_f\colon\A(P)\to\R$ defined as in \eqref{eq:defholonpert} where we set $\AA\coloneqq A+0\,dt$. In this case the (negative of the) gradient of $h_f$ at $A$ is the uniquely defined $X_f(A)\in\Omega^1(\Sigma,\ad(P))$ such that
\begin{eqnarray}\label{eq:gradienthf}
dh_f(A)(\alpha)=\int_{\Sigma}\langle-X_f(A)\wedge\ast\alpha\rangle 
\end{eqnarray}
for all $\alpha\in\Omega^1(\Sigma,\ad(P))$. Critical points of the thus perturbed elliptic Yang--Mills functional $\mathcal J+h_f$ are solutions $(A,\omega)$ of the system of equations
\begin{eqnarray}\label{eq:pertcritpointeq}
\begin{cases}0=\ast d_A\omega+X_f(A),\\
0=\ast  F_A-\omega.
\end{cases}
\end{eqnarray}
The set of critical points of the functional $\mathcal J+h_f$ is denoted by $\Crit(\mathcal J+h_f)$. For $a>0$ we also set
\begin{eqnarray*}
\crit^a(\mathcal J+h_f)\coloneqq\{(A,\omega)\in\crit(\J+h_f)\mid\J(A,\omega)<a\}.
\end{eqnarray*}
We define the linear operator $H_{(A,\omega)}^{\J,f}\colon W^{1,p}(\Sigma)\to L^p(\Sigma)$ to be the Hessian at $(A,\omega)$ of the perturbed functional $\J+h_f$, i.e.~
\begin{eqnarray*}
H_{(A,\omega)}^{\J,f}(\alpha,v)=(\ast d_Av-\ast[\omega\wedge\alpha]+\d\!X_f(A)\alpha,\ast d_A\alpha-v).
\end{eqnarray*}
Note that the previously defined operator $B_{(A,\omega,0)}$ (cf.~\eqref{linoperator}) is the Hessian $H_{(A,\omega)}^{\J,0}$ considered here, augmented by a gauge-fixing condition. Associated with each $(A,\omega)\in\Crit(\mathcal J+h_f)$ is the twisted de Rham complex
\begin{multline*}
\Omega^0(\Sigma,\ad(P))\overset{(d_A,[\omega\wedge\,\cdot\,])}{\longrightarrow}\Omega^1(\Sigma,\ad(P))\oplus\Omega^0(\Sigma,\ad(P))\\
\overset{H_{(A,\omega)}^{\J,f}}{\longrightarrow}\Omega^1(\Sigma,\ad(P))\oplus\Omega^0(\Sigma,\ad(P)).
\end{multline*}
The first operator in this complex is the infinitesimal action of the group $\G(P)$ at $(A,\omega)$. Let 
\begin{eqnarray*}
H_{(A,\omega)}^0\coloneqq\ker(d_A,[\omega\wedge\,\cdot\,]),\qquad H_{(A,\omega,f)}^1\coloneqq\frac{\ker H_{(A,\omega)}^{\J,f}}{\im(d_A,[\omega\wedge\,\cdot\,])} 
\end{eqnarray*}
be the cohomology groups arising from that complex. We call a critical point $(A,\omega)\in\Crit(\mathcal J+h_f)$ {\bf{irreducible}} if $H_{(A,\omega)}^0=0$. It is called {\bf{nondegenerate}} if $H_{(A,\omega,f)}^1=0$.

\subsection{Transversality theorem}

Throughout this section we fix a number $a>0$. 

\begin{definition}\label{def:regularorbit}\upshape
Let $m\in\N$. A pair $(\gamma,f)\in\Gamma_m\times\mathcal F_m$ is called $a$-{\bf{regular}} if the following two conditions are satisfied.
\begin{compactitem}
\item[(i)]
Every critical point $\AA=(A,\omega)\in\crit^a(\J+h_f)$ is irreducible and nondegenerate.
\item[(ii)]
Let $\AA=(A,\omega)\colon\R\to\A(P)\times\Omega^0(\Sigma,\ad(P))$ be a solution of Eq.~\eqref{pertEYM} such that $\lim_{s\to\pm\infty}(A(s),\omega(s))=(A^{\pm},\omega^{\pm})$ for a pair $(A^{\pm},\omega^{\pm})\in\Crit^a(\J+h_f)$. Then the operator $\mathcal D_{(A,\omega,0)}$ defined in \eqref{linoperator} is surjective (for every $p>1$).
\end{compactitem}
For $\gamma\in\Gamma_m$ we denote by $\mathcal F_{\reg}^a(\gamma)$ the set of maps $f\in\mathcal F_m$ such that $(\gamma,f)$ is $a$-regular.
\end{definition}

We now proceed in two steps. We first show that nondegeneracy of critical points in the sense of Definition \ref{def:regularorbit} (i) can be achieved by adding to $\J$ a perturbation $h_f$ which depends only on $A$. Thereafter we show that there exists an $a$-regular perturbation $h_{f'}$ (which now may depend on $(A,\omega)$) close to $h_f$ such that $\crit^a(\J+h_f)=\crit^a(\J+h_{f'})$. To be able to formulate our transversality theorems it is necessary to first introduce a family of seminorms on the space of perturbations. Here we follow again closely \cite{SalWeh} and define for any $k\geq1$ and perturbation $f$ the $k$-seminorm of $X_f$ as
\begin{eqnarray*}
\Vert X_f\Vert_k\coloneqq\sup_{\AA\in\tilde\A(P)}\left(\frac{\|X_f(\AA)\|_{C^k}}{1+\|\AA\|_{C^k}}+\sup_{\alpha+v\,dt\in T_{\AA}\tilde\A(P)}\frac{\|\d\!X_f(\AA)(\alpha+v\,dt)\|_{C^{k-1}}}{\|\alpha+v\,dt\|_{C^{k-1}}(1+\|\AA\|_{C^{k-1}})^{k-1}}\right).
\end{eqnarray*}
A $k$-seminorm for $Y_f$ is defined analogously. In the case where $X_f$ depends only on $A$ (and not on $\omega$), we modify the definition of $\Vert X_f\Vert_k$ accordingly.

\begin{thm}\label{thm:firsttransvers}
Let $(\gamma_0,f_0)\in\Gamma_{m_0}\times\mathcal F_{m_0}$ be such that every critical point of $\mathcal J+h_{f_0}$ in the sublevel set of $a$ is irreducible. Then, for every $k\geq1$ and $\eps>0$, there exists $n\in\N$ and a pair $(\gamma,f)\in\Gamma_n\times\mathcal F_n$ that satisfies condition (i) in Definition \ref{def:regularorbit} and such that $\|X_f-X_{f_0}\|_k<\eps$.
\end{thm}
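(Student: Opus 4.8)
The plan is to follow the now-standard Sard--Smale argument for achieving transversality via holonomy perturbations, adapted from \cite[Appendix D]{SalWeh}. First I would set up the universal moduli space: for a large parameter $n$ (to be determined) and a base perturbation $\gamma_0$ extended by additional embedded solid tori $\gamma_1,\dots,\gamma_{n-m_0}$ whose images are dense in $\Sigma\times S^1$, consider the map
\begin{eqnarray*}
\mathcal S\colon\{(A,\omega,f)\mid f\in\mathcal B,\ (A,\omega)\in\crit^a(\J+h_f)\}\to\text{(something)},
\end{eqnarray*}
where $\mathcal B$ is a suitable Banach space of perturbations obtained by completing $\mathcal F_n$ with respect to a weighted $C^\infty$-type norm built from the seminorms $\Vert\cdot\Vert_k$ (the Floer $C^\varepsilon$ trick). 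Equivalently, work with the section $\mathfrak{s}(A,\omega,f)=(\ast d_A\omega+X_f(A),\ast F_A-\omega)$ of a Banach bundle over $(\A^{1,p}(P)\times W^{1,p}(\Sigma))\times\mathcal B$ and show $\mathfrak s$ is transverse to the zero section, so that the universal moduli space $\mathcal M^a_{\mathrm{univ}}$ is a Banach manifold.

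The heart of the matter is the transversality-of-the-universal-section computation: at a zero $(A,\omega,f)$ one must show the linearization
\begin{eqnarray*}
(\alpha,v,\hat f)\mapsto\big(\ast d_Av-\ast[\omega\wedge\alpha]+\d\!X_f(A)\alpha+X_{\hat f}(A),\ \ast d_A\alpha-v\big)
\end{eqnarray*}
is surjective onto $L^p$, modulo the infinitesimal gauge action. Since $A$ is already assumed irreducible (this is where the hypothesis on $(\gamma_0,f_0)$ and the fact that irreducibility is an open condition preserved under small perturbation enters), the only obstruction lies in a finite-dimensional cokernel consisting of solutions $(\beta,w)$ of the formal adjoint equation that are $L^2$-orthogonal to the image. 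Using the second equation to eliminate $w=-\ast d_A\beta$ (up to signs), one reduces to showing: if $\beta$ is a nonzero (twisted-harmonic-type) $1$-form orthogonal to $X_{\hat f}(A)$ for every admissible $\hat f$, then $\beta\equiv0$. This is precisely the point where the holonomy perturbations must be shown to be rich enough, and I expect it to be the main obstacle: one invokes a unique continuation argument to show $\beta$ cannot vanish on an open set, then picks a point $p$ where $\beta(p)\neq0$ and a short loop $\theta\mapsto\gamma_i(\theta,z)$ through $p$ such that the derivative of the holonomy $\rho_i$ with respect to $A$ in the direction $\beta$ is nonzero; choosing $f$ supported near the corresponding point of $G^n$ and appropriately one produces $X_{\hat f}(A)$ with $\langle X_{\hat f}(A),\beta\rangle\neq0$, a contradiction. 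The required holonomy computation is essentially \cite[Lemma D.4 / Prop.~D.2]{SalWeh}, transplanted to the bundle $P\times S^1\to\Sigma\times S^1$ with $\AA=A+0\,dt$.

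Once $\mathcal M^a_{\mathrm{univ}}$ is a Banach manifold, I would apply the Sard--Smale theorem to the projection $\pi\colon\mathcal M^a_{\mathrm{univ}}\to\mathcal B$. The Fredholm property of $\pi$ follows because its linearization has kernel and cokernel identified with $\ker$ and $\coker$ of the Hessian complex at $(A,\omega)$, which are finite-dimensional by ellipticity on the closed surface $\Sigma$ (the index is $-\dim H^0+\dim H^1$, and on the sublevel set $\{\J<a\}$ the number of such critical points, though possibly a positive-dimensional set a priori, is handled by a standard compactness argument showing $\crit^a$ is compact so only finitely many components occur). Regular values $f$ of $\pi$ are exactly those for which every critical point of $\J+h_f$ below level $a$ is nondegenerate; since $A$ being irreducible persists, these $f$ satisfy condition (i) of Definition \ref{def:regularorbit}. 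The set of regular values is residual, hence dense, in $\mathcal B$, so there is such an $f$ arbitrarily $\Vert\cdot\Vert_k$-close to $f_0$; a final approximation argument (the $C^\varepsilon$-space is dense in the relevant $C^k$-topology and $X_f$ depends continuously on $f$) upgrades this to $\Vert X_f-X_{f_0}\Vert_k<\eps$ with $f$ lying in the original smooth class $\mathcal F_n$, completing the proof. The main technical obstacle throughout remains the surjectivity-of-the-universal-linearization step and, within it, the non-vanishing holonomy-variation lemma together with the unique continuation input.
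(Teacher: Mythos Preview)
Your proposal is correct in outline and follows the same Sard--Smale strategy as the paper, but the execution differs in three places worth noting.

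First, the paper does not linearize the full critical point system for $(A,\omega)$ and then reduce; instead it begins (Step~1 of the proof) by observing that condition (i) of Definition~\ref{def:regularorbit} is implied by $\ker H_{A,f}=\im d_A$ for the perturbed \emph{Yang--Mills} Hessian $H_{A,f}=d_A^{\ast}d_A+\ast[\ast F_A\wedge\,\cdot\,]+\d X_f(A)$ on $1$-forms alone. This reduction is not immediate---it requires the algebra already developed in Proposition~\ref{prop:redeigenvalueeq} and the injectivity argument from Lemma~\ref{lem:crossingformula}---so your elimination ``$w=-\ast d_A\beta$'' is going in the right direction but you should recognize it rests on prior work in the paper.

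Second, and more substantively, the paper does \emph{not} use unique continuation plus short loops to show the holonomy perturbations are rich enough. Instead it invokes Proposition~\ref{prop:holonomydifferential}(ii): a $1$-form $\beta$ lies in $\im d_A$ if and only if $d\rho_\gamma(A)\beta$ is tangent to the conjugation orbit for \emph{every} embedded loop $\gamma$. So if $\beta\in\ker H_{A,f_0}$ with $d_A^{\ast}\beta=0$ and $\beta\neq0$, irreducibility forces $\beta\notin\im d_A$, whence some loop witnesses $[d\sigma(A)\beta]\neq0$ in $T_{[\sigma(A)]}(G^m/G)$; an explicit bump function $\hat f$ on $G^m/G$ then gives $dh_{\hat f}(A)\beta\neq0$. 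Your local argument could be made to work, but the passage from ``$d\rho_i(A)\beta\neq0$ in $T_{\rho_i}G$'' to ``nonzero in the conjugation quotient'' is exactly where the global characterization is cleaner, and you would have to confront it anyway since $\hat f$ must be conjugation-invariant.

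Third, the paper avoids the Floer $C^\varepsilon$ Banach-space trick entirely. Compactness of $\crit^a(\J+h_{f_0})/\G(P)$ together with the unit sphere in each $\ker H_{A,f_0}|_{\ker d_A^{\ast}}$ being compact allows one to choose a \emph{single} finite $m>m_0$ and finitely many loops that work uniformly for all critical points (Step~2 of the paper's proof). The Sard--Smale argument then runs over the finite-dimensional $\mathcal F_m^{k,\eps}=\{f:\|X_f-X_{f_0}\|_k<\eps\}$. This is more elementary than your dense family of solid tori plus a weighted completion, and it yields directly a smooth perturbation in $\mathcal F_n$ rather than requiring a separate approximation step at the end.
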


\begin{proof} 
We split the proof into five steps.

\setcounter{step}{0}
\begin{step}\label{step:step0}
Assume the connection $A\in\A(P)$ is irreducible. Let $h_f\colon\A(P)\to\R$ be a perturbation such that the kernel of the perturbed Yang--Mills Hessian $H_{A,f}\colon\alpha\mapsto d_A^{\ast}d_A\alpha+\ast[\ast F_A\wedge\alpha]+\d\!X_f(A)\alpha$ equals $\im\big(d_A\colon\Omega^0(\Sigma,\ad(P))\to\Omega^1(\Sigma,\ad(P))\big)$. Then $h_f$ satisfies condition (i) in Definition \ref{def:regularorbit}.
\end{step}

It suffices to show that the assumptions imply that the operator $B_{f,0}$ as defined in Proposition \ref{prop:interpolatingfamily} is injective. By Proposition \ref{prop:redeigenvalueeq} applied with $\lambda=0$, injectivity of $B_{f,0}$ is equivalent to injectivity of the operator $C_{f,0}=C_f$. Since $(A,\omega)\in\crit^a(\J+h_f)$ and therefore $\omega=\ast F_A$, the operator $C_f$ takes the form of the operators $C_f^{\pm}$ considered in the proof of Lemma \ref{lem:crossingformula}. But for these injectivity has been shown under the same assumptions we made here, and hence the claim follows.

\begin{step}\label{step:step1}
Let $(\gamma_0,f_0)\in\Gamma_{m_0}\times\mathcal F_{m_0}$ satisfy the assumptions of the theorem. Then there exists $m>m_0$ and $\gamma\in\Gamma_m$ with $\gamma_i=\gamma_{0i}$ for $i=1,\ldots,m_0$ such that the following condition is satisfied. Let
\begin{eqnarray*}
\sigma(A)\coloneqq\rho(0,A)=(\rho_1(0,A),\ldots,\rho_m(0,A))\in G^m.
\end{eqnarray*} 
Then for every critical point $(A,\omega)\in\Crit^a(\J+h_{f_0})$ and every nonzero $\alpha\in\ker H_{A,f_0}$ such that $d_A^{\ast}\alpha=0$, it follows that $[d\sigma(A)\alpha]\in T_{[\sigma(A)]}(G^m/G)$ is nonzero.
\end{step}

For any fixed nonzero $\alpha\in\ker H_{A,f_0}$ such that $d_A^{\ast}\alpha=0$ it follows from statement (ii) in Proposition \ref{prop:holonomydifferential} that we can extend $\gamma_0$ to $\gamma$ such that $[d\sigma(A)\alpha]\neq0$. Because the latter property is open it remains satisfied for all $\alpha'$ sufficiently close to $\alpha$. Elliptic theory implies that the unit sphere in $\ker H_{A,f_0}|_{\ker d_A^{\ast}}$ is compact. A standard compactness argument now completes the proof of the claim.

\begin{step}\label{step:step2}
Let $m\in\N$ and $(\gamma,f)\in\Gamma_m\times\mathcal F_m$. Fix a number $p>1$ and let $q>1$ with $p^{-1}+q^{-1}=1$ denote the Sobolev exponent dual to $p$. Then surjectivity of the operator $H_{A,f}\colon W^{1,p}(\Sigma)\to L^p(\Sigma)$ is equivalent to the vanishing of every $\beta\in L^q(\Sigma)$ with the property that $\beta\in\ker H_{A,f}^{\ast}$ and 
\begin{eqnarray}\label{eq:orthim3}
\int_{\Sigma}\langle X_f(A)\wedge\ast\beta\rangle=0.
\end{eqnarray}
\end{step}

As follows from standard arguments, the range of $H_{A,f}$ is closed. Then surjectivity of $H_{A,f}$ is equivalent to injectivity of its dual operator, whence the claim.

\begin{step}\label{step:step3}
Let $(\gamma_0,f_0)$ and $m>m_0$ be as in Step \ref{step:step1}. For $k\in\N$ we denote $\mathcal F_m^k\coloneqq C^{k+1}(\D\times G^m)^G$ and consider the linear operator 
\begin{eqnarray}\label{eq:univoperator}
W^{1,p}(\Sigma)\times\mathcal F_m^k\to L^p(\Sigma)\colon(\alpha,f')\mapsto H_{A,f}\alpha+X_{f'}(A).
\end{eqnarray}
Then for every $k\in\N$ there exists $\eps>0$ such that for all $f\in\mathcal F_m$ with $\|X_{f_0}-X_f\|_k<\eps$ and for all $(A,\omega)\in\crit^a(\J+h_f)$ the operator in \eqref{eq:univoperator} is surjective.  
\end{step}
 
We first verify the claim for $f=f_0$. Assume by contradiction that there exists $(A,\omega)\in\crit^a(\J+h_{f_0})$ such that the operator in \eqref{eq:univoperator} is not surjective. Then by Step \ref{step:step2} there exists $\beta\neq0$ in $L^q(\Sigma)$ which satisfies $\beta\in\ker H_{A,f_0}^{\ast}$ as well as \eqref{eq:orthim3}. It follows from Step \ref{step:step1}   that $[d\sigma(A)\beta]\neq0$ in $T_{[\sigma(A)]}G^m/G$. This implies that the map $r\mapsto[\rho(0,A+r\beta)]$ is an embedding of a sufficiently small neighbourhood of zero. Therefore there exists a map $\hat f\in\mathcal F_m^k$ such that 
\begin{eqnarray*}
\hat f(z,\rho(z,A+r\beta))=r\chi(r)\chi(|z|)
\end{eqnarray*}
is satisfied for all $r\in\R$ and all $z\in\D$. Here $\chi\colon\R\to[0,1]$ is a suitably chosen smooth cutoff function such that $\chi$ equals $1$ near $0$ and $\supp\chi$ is contained in a sufficiently small neighbourhood of $0$. It follows that 
\begin{eqnarray*}
dh_{\hat f}(A)\beta=\frac{d}{dr}\Big|_{r=0}\int_{\D}\hat f(z,\rho(z,A+r\beta))\,d^2z=\int_{\D}\chi(|z|)\,d^2z>0.
\end{eqnarray*}
This inequality contradicts Eq.~\eqref{eq:orthim3}. The claim in the case $f=f_0$ follows. Because surjectivity is an open condition and the set $\crit^a(\J+h_{f_0})/\G(P)$ is compact it follows that the operator in \eqref{eq:univoperator} is surjective for all $f\in\mathcal F_m$ such that $\|X_f-X_{f_0}\|_k$ is sufficiently small.

\begin{step}\label{step:step4}
We prove the theorem.
\end{step}  

We first claim that for every $k\in\N$ there exists $\eps>0$ such that 
\begin{multline*}
\mathcal M^{\ast}(\mathcal F_m^{k,\eps})\coloneqq\\
\{(A,f)\in\A(P)\times\mathcal F_m\mid d_A^{\ast}F_A+X_f(A)=0,\J(A,\ast F_A)<a,\|X_f-X_{f_0}\|_k<\eps\}
\end{multline*}
is a $C^k$-Banach manifold. Namely, the linearization of the equation $d_A^{\ast}F_A+X_f(A)=0$ at each $(A,f)\in\mathcal M^{\ast}(\mathcal F_m^{k,\eps})$ is a surjective operator as was shown in Step \ref{step:step3}. It is Fredholm by standard arguments. Therefore the infinite-dimensional version of the implicit function theorem proves the claim. It follows that the projection map $\pi\colon\mathcal M^{\ast}(\mathcal F_m^{k,\eps})\to\mathcal F_m^{k,\eps}$ is a $C^k$-Fredholm map between $C^k$-Banach manifolds. The Fredholm index of its linearization $d\pi(A,f)$ is zero for all $(A,f)\in\mathcal M^{\ast}(\mathcal F_m^{k,\eps})$. Hence the Sard--Smale theorem applies and shows that the set of regular values of $\pi$ is dense in $\mathcal F_m^{k,\eps}$. For each such regular value $f$ and every $A\in\pi^{-1}(f)$ it follows from surjectivity of the operator in \eqref{eq:univoperator} and since each $d\pi(A,f)$ has Fredholm index equal to zero that the perturbed Hessian $H_{A,f}$ is surjective. This shows that $H_{(A,\omega,f)}^1=0$ for all $(A,\omega)\in\crit^a(\J+h_f)$. Because irreducibility of connections is an open condition we also have that $H_{(A,\omega)}^0=0$ for all such $(A,\omega)$ after choosing $\eps>0$ still smaller if necessary. That in fact $f$ can be chosen to be smooth follows from a density argument as in \cite[Theorem 8.3]{SalWeh}. This completes the proof of Theorem \ref{thm:firsttransvers}. 
\end{proof}

\begin{thm}\label{thm:secondtransvers}
Let $(\gamma_0,f_0)\in\Gamma_{m_0}\times\mathcal F_{m_0}$ be the perturbation as in the conclusion of Theorem \ref{thm:firsttransvers} (which was named $(\gamma,f)$ there). Then for every $k\geq1$ and $\eps>0$ there exists $m\geq1$ and an $a$-regular pair $(\gamma_1,f_1)\in\Gamma_m\times\mathcal F_m$ which satisfies
\begin{align}
\label{eq:condperturbation1}\Crit^a(\J+h_{f_0})=\Crit^a(\J+h_{f_0}+h_{f_1}),\\
\label{eq:condperturbation2}(A,\omega)\in\Crit^a(\J+h_{f_0})\quad\Longrightarrow\quad h_{f_1}(A,\omega)=0,\\
\label{eq:condperturbation3}\|X_{f_1}\|_k<\eps.
\end{align}
\end{thm}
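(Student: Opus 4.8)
The plan is to run a Sard--Smale argument in the spirit of Theorem \ref{thm:firsttransvers}, but now on the parametrized moduli space of connecting trajectories rather than on critical points, and to do so using perturbations that are arranged to vanish identically near the (already nondegenerate) critical points of $\J+h_{f_0}$. First I would fix a constant $a$ and the perturbation $h_{f_0}$ supplied by Theorem \ref{thm:firsttransvers}, so that by construction all critical points in the sublevel set $\{\J<a\}$ are irreducible and nondegenerate; in particular $\Crit^a(\J+h_{f_0})/\G(P)$ is a finite set. Choose disjoint $\G(P)$-invariant open neighbourhoods of these critical points and a cutoff which is $1$ outside a slightly larger invariant neighbourhood. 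I would then restrict attention to the class of holonomy perturbations $h_{f_1}$ whose supporting loops $\gamma$ are contained in $\Sigma\times S^1$ in such a way that, together with a cutoff built into $f_1$, the resulting $(X_{f_1},Y_{f_1})$ vanish on a neighbourhood of $\Crit^a(\J+h_{f_0})$. For such perturbations \eqref{eq:condperturbation1} and \eqref{eq:condperturbation2} hold automatically (the critical point equation \eqref{eq:pertcritpointeq} is unchanged near the old critical points, and no new critical points appear in the sublevel set for $\|X_{f_1}\|_k$ small, since the Hessians there stay invertible by Proposition \ref{expconv:lemmaest}-type stability), and \eqref{eq:condperturbation3} is just a matter of taking the perturbation small.

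Next I would set up the universal moduli space. For fixed $k\geq 1$, $p>1$, and a pair $(A^-,\omega^-),(A^+,\omega^+)\in\Crit^a(\J+h_{f_0})$, consider
\begin{eqnarray*}
\mathcal M^{\ast}:=\{(A,\omega,f_1)\mid (A,\omega,0)\ \text{solves}\ \eqref{pertEYM}\ \text{for}\ h_{f_0}+h_{f_1},\ \text{limits}\ (A^{\pm},\omega^{\pm}),\ \|X_{f_1}\|_k<\eps\}/\G(\R\times\Sigma),
\end{eqnarray*}
and the vertical differential at a point of $\mathcal M^{\ast}$, namely $(\xi,f_1')\mapsto\mathcal D_{(A,\omega,0)}\xi+(\text{d}\!X_{f_1'},\text{d}\!Y_{f_1'})(A,\omega)$. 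By Theorem \ref{thm:Fredholmthm} the operator $\mathcal D_{(A,\omega,0)}$ is Fredholm, hence has closed range, so surjectivity of the vertical differential reduces to showing that any $\eta\in L^q$ annihilating $\im\mathcal D_{(A,\omega,0)}$ and satisfying the orthogonality relation $\int(\langle X_{f_1'}\wedge\ast\eta_1\rangle+\langle Y_{f_1'}\wedge\ast\eta_2\rangle)=0$ for all admissible $f_1'$ must vanish. Such an $\eta$ solves the formal adjoint equation $\mathcal D_{(A,\omega,0)}^{\ast}\eta=0$, hence by elliptic regularity and the exponential-decay results of \textsection\ref{sect:exponentialdecay} is smooth and decays at $\pm\infty$; in particular if $\eta\not\equiv 0$ it is nonzero on an open subset of $\R\times\Sigma$ away from the limiting critical points. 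On that set I would use the holonomy-perturbation flexibility: by a version of Proposition \ref{prop:holonomydifferential} (statement (ii) there, applied now to loops in $\Sigma\times S^1$ through a point where $\eta\neq 0$ and where the trajectory is not stationary) one can enlarge $\gamma$ and choose a bump function $f_1'$ so that the pairing $\int(\langle X_{f_1'}\wedge\ast\eta_1\rangle+\langle Y_{f_1'}\wedge\ast\eta_2\rangle)$ is strictly positive, contradicting the orthogonality relation. Thus the vertical differential is surjective, $\mathcal M^{\ast}$ is a $C^k$-Banach manifold, the projection $\pi\colon\mathcal M^{\ast}\to\{f_1\mid\|X_{f_1}\|_k<\eps\}$ is a $C^k$-Fredholm map, and Sard--Smale yields a dense set of regular values $f_1$; for each such $f_1$ the operator $\mathcal D_{(A,\omega,0)}$ is surjective along every connecting trajectory between points of $\Crit^a$, which is exactly condition (ii) of Definition \ref{def:regularorbit}. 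Finally, a density/approximation argument as in \cite[Theorem~8.3]{SalWeh} upgrades $f_1$ from $C^{k+1}$ to smooth while keeping all three conditions, and one intersects over the finitely many pairs of critical points and a countable exhaustion by compact pieces of trajectory space to obtain a single $a$-regular smooth perturbation.

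The main obstacle is the surjectivity step for the parametrized linearized operator, specifically the passage from "$\eta$ annihilates the image and is orthogonal to all admissible perturbation directions" to "$\eta\equiv 0$." Two points need care there. First, the perturbations $h_{f_1}$ are constrained to vanish near the critical points, so the argument producing a perturbation with positive pairing must be localized to an interior slab $|s|\le S$ of the cylinder where $\eta$ is still nonzero; this is fine because a finite-energy trajectory which is stationary for all large $|s|$ is actually constant (by unique continuation / the exponential decay estimates), so if $\eta\neq 0$ somewhere it is nonzero at a non-stationary interior point. Second, one must check that the holonomy of a connection of the product form $A+\omega\,dt$ over small loops in $\Sigma\times S^1$ genuinely detects variations in the directions $(\eta_1,\eta_2)$ jointly — i.e.\ that the relevant analogue of Proposition \ref{prop:holonomydifferential} covers the $\omega$-component as well; this is the reason the perturbation class in \textsection\ref{subsec:holonomypert} was set up to depend on the full pair $(A,\omega)$, and verifying the needed non-degeneracy of the holonomy differential is where the real work lies. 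Everything else — closed range, the implicit function theorem, Sard--Smale, the compactness of the critical set, and the smoothing argument — is by now standard and parallels \cite{SalWeh} and the proof of Theorem \ref{thm:firsttransvers} above.
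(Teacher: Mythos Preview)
Your strategy matches the paper's proof: restrict to holonomy perturbations that vanish near the already nondegenerate critical set, set up the universal moduli problem, show surjectivity of the universal linearization by producing for each nonzero $\eta\in\ker\mathcal D_{(A,\omega,0)}^{\ast}$ an admissible direction $f'$ with nonzero pairing, then apply Sard--Smale and smooth. One cosmetic difference: the paper implements ``vanish near critical points'' not by cutoffs on $\A(P)$ but by taking $f$ in $C^{k+1}(\D\times G^m)^G$ vanishing on a neighbourhood of the holonomy image $C=\{(z,\rho(z,\AA)):\AA\in\crit^a(\J+h_{f_0})\}$, which is the natural thing since $h_f$ sees $(A,\omega)$ only through holonomies; Step~1 also arranges that $\sigma(\AA(s_0))\notin C$, so such $f'$ are available at the chosen time.

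The point you flag as ``where the real work lies'' is sharper than your sketch indicates, and it is worth naming precisely. To build $f'$ with $\int_{\R} dh_{f'}(\AA(s))\eta(s)\,ds>0$ one wants to prescribe
\[
f'(z,\rho(z,\AA(s)+r\eta(s)))=r\,\chi(r)\,\chi(s-s_0)\,\chi(|z|),
\]
and for this to be well-defined the map $(r,s)\mapsto[\rho(z,\AA(s)+r\eta(s))]\in G^m/G$ must be an embedding near $(0,s_0)$. That requires not merely $[d\sigma(\AA(s_0))\eta(s_0)]\neq 0$ but \emph{linear independence} of $[d\sigma(\AA(s_0))\eta(s_0)]$ and $[d\sigma(\AA(s_0))\partial_s\AA(s_0)]$ in $T_{[\sigma(\AA(s_0))]}(G^m/G)$. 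The paper secures this in its Step~1 via Proposition~\ref{prop:orthrelations}, which shows that for $\eta\in\ker\mathcal D_{(A,\omega,0)}^{\ast}$ both $\eta(s_0)$ and $\partial_s\AA(s_0)$ lie in $(\im d_{\AA(s_0)})^{\perp}$ and are mutually orthogonal; combined with Proposition~\ref{prop:holonomydifferential}(ii) and a compactness sweep over the combinations $\eta(s_0)+\lambda\,\partial_s\AA(s_0)$, this yields an enlargement of $\gamma$ making the two directions independent in $G^m/G$. Your outline invokes Proposition~\ref{prop:holonomydifferential} at a non-stationary point but does not isolate this two-variable embedding requirement or the orthogonality input from Proposition~\ref{prop:orthrelations}; that is the actual content you would need to supply.
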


\begin{proof} 
We proceed in three steps. At several places we identify $(A,\omega)$ with $\AA=A+\omega\,dt$ as explained above.
\setcounter{step}{0}
\begin{step}\label{step:step0thm2}
There exists $m\geq m_0$ and $\gamma\in\Gamma_m$ with $\gamma_i=\gamma_{0i}$ for $i=1,\ldots,m_0$ such that the following condition is satisfied.
Let
\begin{eqnarray*}
\sigma(\AA)\coloneqq\rho(0,A)=(\rho_1(0,\AA),\ldots,\rho_m(0,\AA))\in G^m.
\end{eqnarray*} 
For every pair of distinct critical points $(A^{\pm},\omega^{\pm})\in\crit^a(\J+h_{f_0})$ and every $(A,\omega,0)\in\widehat M_{f_0}(A^-,\omega^-,A^+,\omega^+)$ (cf.~\textsection \ref{sect:modulispFredholm} for notation) there is $s_0\in\R$ such that the following holds. First, 
\begin{eqnarray}\label{eq:critholonomy}
\sigma(\AA(s_0))\notin\{\sigma(\AA)\mid\AA=A+\omega\,dt,(A,\omega)\in\crit^a(\J+h_{f_0})\}.
\end{eqnarray}
Second, for every nonzero $\eta\in\ker\mathcal D_{(A,\omega,0)}^{\ast}$ it follows that $[d\sigma(\AA(s_0))\partial_s\AA(s_0)]$ and $[d\sigma(\AA(s_0))\eta(s_0)]$ are linearly independent vectors in $T_{[\sigma(\AA(s_0))]}G^m/G$. 
\end{step}

Fix $s_0\in\R$ such that the connection $\AA(s_0)$ is irreducible. Such a choice of $s_0$ is possible because the limit connections $(A^{\pm},\omega^{\pm})$ are irreducible by assumption. Since $(A^+,\omega^+)$ and $(A^-,\omega^-)$ are distinct it follows that $(\J+h_{f_0})(A^-,\omega^-)< (\J+h_{f_0})(\AA(s_0))<(\J+h_{f_0})(A^+,\omega^+)$. Therefore $\AA(s_0)$ is not gauge equivalent to $(A^+,\omega^+)$ or to $(A^-,\omega^-)$. It then follows from Proposition \ref{prop:holonomydifferential} (i) that there exist $m\geq m_0$ and $\gamma\in\Gamma_m$ as required and such that \eqref{eq:critholonomy} holds. Furthermore, it follows from statement (ii) in Proposition \ref{prop:holonomydifferential} that we can choose $m\in\N$ and $\gamma\in\Gamma_m$ such that in addition $d\sigma(\AA(s_0))\partial_s\AA(s_0)\notin V$. Here we let
\begin{eqnarray*}
V\coloneqq\{(\xi\sigma_i(\AA(s_0))-\sigma_i(\AA(s_0))\xi)_{i=1,\ldots,m}\mid\xi\in\mathfrak g\}
\end{eqnarray*}
denote the tangent space at $(\sigma_i(\AA(s_0))_{i=1,\ldots,m}$ to its $G$-orbit by simultaneous conjugation. It follows that $\delta\coloneqq\inf_{v\in V}\|d\sigma(\AA(s_0))\partial_s\AA(s_0)-v\|>0$. By compactness of the unit sphere in $\ker\mathcal D_{(A,\omega,0)}^{\ast}$ it suffices to prove the claim for each fixed nonzero $\eta\in\ker\mathcal D_{(A,\omega,0)}^{\ast}$. Note that $\eta(s_0)\neq0$ by a unique continuation argument. By definition of the constant $\delta$ it follows that
\begin{eqnarray}\label{eq:condnotinV}
d\sigma(\AA(s_0))(\eta(s_0)+\lambda\partial_sA(s_0))\notin V
\end{eqnarray}
for all $|\lambda|>c\coloneqq\delta^{-1}\|d\sigma(\AA(s_0))\eta(s_0)\|$. Hence for this range of $\lambda$ we see that $[d\sigma(\AA(s_0))(\eta(s_0)+\lambda\partial_sA(s_0))]\neq0$ in $T_{[\sigma(\AA(s_0))]}G^m/G$. To also show this property for $|\lambda|\leq c$ it suffices to prove it for any fixed such $\lambda$ by openess of the condition \eqref{eq:condnotinV} and compactness of the interval $[-c,c]$. We fix an arbitrary $\lambda\in[-c,c]$. Because $\eta(s_0)\neq0$ and $\partial_s\AA(s_0)\neq0$ it follows from Proposition \ref{prop:orthrelations} that $\eta(s_0)$ and $\partial_s\AA(s_0)$ are linearly independent, and hence $\eta(s_0)+\lambda\partial_sA(s_0)\neq0$.  Again by Proposition \ref{prop:orthrelations} it follows that
\begin{eqnarray*}
d_{\AA}^{\ast}(\eta(s_0)+\lambda\partial_sA(s_0))=0.
\end{eqnarray*}
Thus Proposition \ref{prop:holonomydifferential} yields the existence of a number $m_1\geq m$ and a suitable tuple $(\gamma_1,\ldots,\gamma_{m_1})\in\Gamma_{m_1}$ of embedded loops (with $\gamma_i$ as before for $i=1,\ldots,m$) such that $[d\sigma(\AA(s_0))(\eta(s_0)-\lambda\partial_s\AA(s_0))]$ does not vanish, as desired. This completes the proof of the claim.

\begin{step}\label{step:step1thm2}
For $k\in\N$ and $\eps_1>0$ we denote
\begin{eqnarray*}
\mathcal F_m^{k,\eps_1}\coloneqq\{f\in C^{k+1}(\D\times G^m)^G\mid f|_{B_{\eps_1}(C)}=0,\|f\|_{C^{k+1}}<\eps_1\},
\end{eqnarray*}
where $C\coloneqq\{(z,\rho(z,\AA))\in\D\times G^m\mid\AA=(A,\omega)\in\crit^a(\J+h_{f_0})\}$. Then for every $k\in\N$ the constant $\eps_1>0$ can be chosen such that conditions \eqref{eq:condperturbation1}, \eqref{eq:condperturbation2}, \eqref{eq:condperturbation3} are satisfied for every perturbation $f\in\mathcal F_m^{k,\eps_1}$. Furthermore, for every $(A^{\pm},\omega^{\pm})\in\crit^a(\J+h_{f_0})$ and $f\in\mathcal F_m^{k,\eps_1}$ the linear operator 
\begin{multline*}
\hat{\mathcal D}_{(A,\omega)}\colon\mathcal W^p\oplus T_f\mathcal F_m\to\mathcal L^p,\\
(\alpha,v,\psi,f')\mapsto\mathcal D_{(A,\omega,0)}(\alpha,v,\psi)+(-X_{f'}(\AA),Y_{f'}(\AA),0)
\end{multline*}
is surjective for all $(A,\omega,0)\in\widehat M_f(A^-,\omega^-,A^+,\omega^+)$.
\end{step}

It follows by definition of the space $\mathcal F_m^{k,\eps_1}$ that conditions \eqref{eq:condperturbation1}, \eqref{eq:condperturbation2}, \eqref{eq:condperturbation3} are satisfied for every sufficiently small $\eps_1<\eps$. To prove the second claim we first note that by standard arguments the range of $\hat{\mathcal D}_{(A,\omega,f)}$ is closed. Let $q>1$ with $p^{-1}+q^{-1}=1$ denote the Sobolev exponent dual to $p$. Then surjectivity of $\hat{\mathcal D}_{(A,\omega,f)}$ is equivalent to the vanishing of every $\eta\in\mathcal L^q$ orthogonal to $\im\hat{\mathcal D}_{(A,\omega,f)}$. Any such $\eta=(\eta_0,\eta_1,\eta_2)$ is contained in $\ker\hat{\mathcal D}_{(A,\omega,f)}^{\ast}$ and satisfies
\begin{eqnarray}\label{eq:orthim2}
\int_{-\infty}^{\infty}\langle X_{f'}(\AA(s)),\eta_0(s)\rangle+\langle Y_{f'}(\AA(s)),\eta_1(s)\rangle\,ds=0
\end{eqnarray}
for every $f'\in T_f\mathcal F_m$. We first prove the assertion for $f=f_0$ and a fixed pair $(A^{\pm},\omega^{\pm})\in\crit^a(\J+h_{f_0})$. Assume by contradiction that $\eta=(\eta_0,\eta_1,\eta_2)\neq0$ satisfies both  $\hat{\mathcal D}_{(A,\omega,f_0)}^{\ast}\eta=0$ and \eqref{eq:orthim2}. As follows from Step \ref{step:step0thm2} there exists $s_0\in\R$ such that $[d\sigma(\AA(s_0))\partial_s\AA(s_0)]$ and $[d\sigma(\AA(s_0))\eta(s_0)]$ are linearly independent vectors in $T_{[\sigma(\AA(s_0))]}(G^m/G)$. Therefore the map
\begin{eqnarray*}
(r,s)\mapsto[\rho(z,\AA(s)+r\eta(s))]\in G^m/G
\end{eqnarray*}
is an embedding of a neighbourhood of $(0,s_0)\in\R^2$. Now let $\chi\colon\R\to[0,1]$ be a suitably chosen smooth cutoff function such that $\supp\chi$ is contained in a sufficiently small neighbourhood of $0$ and is equal to $1$ near $0$. Then there exists a smooth $G$-invariant function $f'\colon\D\times G^m\to\R$ which by Step \ref{step:step0thm2} can be chosen to vanish in a neighbourhood of $C$ and satisfies 
\begin{eqnarray*}
f'(z,\rho(z,\AA(s)+r\eta(s)))=r\chi(r)\chi(s-s_0)\chi(|z|).
\end{eqnarray*}
for all $(z,r,s)\in\D\times\R\times\R$. For each $s\in\R$ this implies that
\begin{eqnarray*}
dh_{f'}(\AA(s))\eta(s)&=&\frac{d}{dr}\Big|_{r=0}\int_{\D}f'(z,\rho(z,\AA(s)+r\eta(s)))\,d^2z\\
&=&\chi(s-s_0)\int_{\D}\chi(|z|)\,d^2z>0.
\end{eqnarray*}
Hence there exists a perturbation $f'\in T_{f_0}\mathcal F_m$ for which \eqref{eq:orthim2} is not satisfied. This contradiction shows that $\eta=0$. Therefore the operator $\hat{\mathcal D}_{(A,\omega,f_0)}$ is surjective. Surjectivity for every pair $(A^{\pm},\omega^{\pm})\in\crit^a(\J+h_{f_0})$ and $f\in\mathcal F_m^{k,\eps_1}$ follows from a compactness argument after choosing $\eps_1$ sufficiently small.

\begin{step}\label{step:step1thm3}
We prove the theorem.
\end{step}

The statement of the theorem follows from Step \ref{step:step1thm2} by carrying over the arguments of Step \ref{step:step4} of the proof of Theorem \ref{thm:firsttransvers} to the present situation. 
\end{proof}

In the course of proof of the transversality theorems we made frequently use of the following two auxiliary results.

\begin{prop}\label{prop:holonomydifferential}
For an embedding $\gamma\colon S^1\to \Sigma\times S^1$ let $\rho_{\gamma}\colon\tilde{\mathcal A}(P)\to G$ denote the resulting holonomy map, asigning $\AA\in\tilde{\mathcal A}(P)$ its holonomy along $\gamma$. Then the following statements hold true.
\begin{compactitem}
\item[(i)]
Two connections $\AA_1=A_1+\omega_1\,dt,\AA_2=A_2+\omega_2\,dt\in\tilde{\mathcal A}(P)$ are gauge equivalent if and only if there exists $g\in G$ such that
\begin{eqnarray*}
\rho_{\gamma}(\AA_2)=g^{-1}\rho_{\gamma}(\AA_1)g
\end{eqnarray*}
for every embedding $\gamma\colon S^1\to \Sigma\times S^1$.
\item[(ii)]
Let $\AA=A+\omega\,dt\in\tilde{\mathcal A}(P)$ and $(\alpha,v)\in\Omega^1(\Sigma,\ad(P))\oplus\Omega^0(\Sigma,\ad(P))$. Then
\begin{eqnarray*}
\alpha+v\,dt\in\im\big(d_{\AA}\colon\Omega^0(\Sigma,\ad(P))\to\Omega^1(\Sigma\times S^1,\ad(P\times S^1)\big)
\end{eqnarray*}
(where we denote $d_{\AA}=d_A+[\omega\wedge\,\cdot\,]\wedge\,dt$) if and only if for every embedding $\gamma\colon S^1\to \Sigma\times S^1$ there exists $\xi\in\mathfrak g$ such that
\begin{eqnarray*}
d\rho_{\gamma}(\AA)(\alpha+v\,dt)=\rho_{\gamma}(\AA)\xi-\xi\rho_{\gamma}(\AA).
\end{eqnarray*}
\end{compactitem}
\end{prop}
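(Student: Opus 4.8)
Both assertions express, in integrated form (part~(i)) and infinitesimal form (part~(ii)), the classical principle that a connection is pinned down, modulo gauge, by its holonomies around loops; the plan is to prove~(ii) in some detail and to obtain~(i) by running the same reconstruction one level up. Throughout I would use the standard formula for the derivative of a holonomy as a line integral: for a loop $\gamma$ based at $\gamma(0)=\gamma(1)=p$ and $\beta\in\Omega^1(\Sigma\times S^1,\ad(P\times S^1))$,
\[
\rho_{\gamma}(\AA)^{-1}\,d\rho_{\gamma}(\AA)(\beta)=\int_0^1\operatorname{Ad}\bigl(\tau^{\AA}_{\gamma}(t)\bigr)^{-1}\,\beta\bigl(\dot\gamma(t)\bigr)\,dt ,
\]
where $\tau^{\AA}_{\gamma}(t)$ denotes $\AA$-parallel transport along $\gamma$ from $p$ to $\gamma(t)$; the integrand is precisely the obstruction to $\beta$ admitting a $d_{\AA}$-primitive along $\gamma$.

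For~(ii), the ``only if'' direction is immediate: if $\alpha+v\,dt=d_{\AA}\xi$ with $\xi\in\Omega^0(\Sigma,\ad(P))$, then to first order $\AA+s\,d_{\AA}\xi=(\exp(s\xi))^{\ast}\AA$, so conjugation by $\exp(s\xi)$ conjugates $\rho_{\gamma}(\AA)$ by $\exp(s\xi(p))$, and differentiation gives the stated identity with the single element $\xi:=\xi(p)\in\mathfrak g$ (legitimate since $\ad(P\times S^1)$ is pulled back from $\Sigma$). For ``if'', I would fix a base point $x_0$, pick a constant $\xi_0\in\mathfrak g$ produced by the hypothesis at some loop through $x_0$ (its non-uniqueness modulo the centralizer of $\rho_{\gamma}(\AA)$ will be harmless), and for a path $c$ from $x_0$ to $x$ set
\[
\xi(x):=\operatorname{Ad}\bigl(\tau^{\AA}_{c}(1)\bigr)\Bigl(\xi_0+\int_0^1\operatorname{Ad}\bigl(\tau^{\AA}_{c}(t)\bigr)^{-1}\,\beta\bigl(\dot c(t)\bigr)\,dt\Bigr),
\]
which by construction solves the transport equation $\nabla_{\dot c}\xi=\beta(\dot c)$ along $c$. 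Additivity of this integral under concatenation of paths, combined with the hypothesis applied to the loop built from two competing choices of $c$, shows that $\xi(x)$ is independent of $c$; this produces a global section $\xi$ of $\ad(P\times S^1)$ with $d_{\AA}\xi=\beta$. Finally, since $\AA$ and $\beta=\alpha+v\,dt$ are $t$-independent, each slice $\xi(\cdot,t)$ is a $d_A$-primitive of $\alpha$, and integrating the $dt$-component of $d_{\AA}\xi=\beta$ over $S^1$ gives $[\omega,\bar\xi]=v$ for the average $\bar\xi:=\tfrac1{2\pi}\int_{S^1}\xi(\cdot,t)\,dt\in\Omega^0(\Sigma,\ad(P))$; hence $\alpha+v\,dt=d_{\AA}\bar\xi$, which is~(ii).

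Part~(i) is the same reconstruction for the group rather than its Lie algebra: gauge-equivalent connections plainly have simultaneously conjugate holonomies, while conversely, given $g\in G$ with $\rho_{\gamma}(\AA_2)=g^{-1}\rho_{\gamma}(\AA_1)g$ for every $\gamma$, one sets $u(x):=\tau^{\AA_2}_{c}(1)\,g\,\tau^{\AA_1}_{c}(1)^{-1}$ along a path $c$ from $x_0$ to $x$, checks that the hypothesis makes $u(x)$ independent of $c$, verifies $u^{\ast}\AA_1=\AA_2$ by differentiating along paths, and uses the $t$-invariance of $\AA_1,\AA_2$ to adjust $u$ to a $t$-independent gauge transformation. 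The variation-of-holonomy formula and the path-integral reconstruction are routine; the delicate point, which I expect to be the main obstacle, is the base-point bookkeeping: the hypotheses supply a priori only a separate infinitesimal primitive $\xi_{\gamma}$ (resp.\ the single $g$) for each loop, each ambiguous modulo the centralizer of $\rho_{\gamma}(\AA)$, and one must extract from this a single globally consistent primitive (resp.\ gauge transformation). It is here that having \emph{all} embedded loops at one's disposal, hence enough loops through every point, is essential, and here too that the final reduction from a $t$-dependent to a $t$-independent solution must be argued carefully.
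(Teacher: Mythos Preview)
Your approach is exactly the principle the paper invokes: its entire proof reads ``Both statements follow from the fact that a connection $\AA\in\tilde{\mathcal A}(P)$ is uniquely determined by parallel transport along the set of embedded loops in $\Sigma\times S^1$.'' You have supplied the reconstruction argument that this sentence gestures at, so there is no divergence in method, only in level of detail.

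Two remarks on the details you added. First, your averaging step at the end of~(ii) is clean and correct: once a $t$-dependent primitive $\xi$ on $\Sigma\times S^1$ is found, averaging over the $S^1$-factor produces the required element of $\Omega^0(\Sigma,\ad(P))$, since $\alpha$ and $v$ are $t$-independent and the $\partial_t\xi$ contribution integrates to zero. Second, the ``delicate point'' you flag---that the hypothesis in~(ii) supplies a separate $\xi_\gamma$ for each loop, determined only modulo the centralizer of $\rho_\gamma(\AA)$---is genuine, and the paper does not address it either. The resolution is that small contractible loops force $d_{\AA}(\alpha+v\,dt)=0$, after which the path-integral $\xi(x)$ you wrote down is well defined up to an element of $\ker d_{\AA}$ (the parallel sections), and this residual ambiguity matches precisely the ambiguity in the $\xi_\gamma$'s, so a consistent global choice exists. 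You were right to isolate this as the crux; it simply is not spelled out in the paper.
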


\begin{proof}
Both statements follow from the fact that a connection $\AA\in\tilde\A(P)$ is uniquely determined by parallel transport along the set of embedded loops in $\Sigma\times S^1$.
\end{proof}

\begin{prop}\label{prop:orthrelations}
Assume $(A,\omega,0)$ is a solution of Eq.~\eqref{pertEYM}. Denote as before $\AA=A+\omega\,dt$ and $d_{\AA}=d_A+[\omega\wedge\,\cdot\,]\wedge\,dt$. Then $(\eta_0(s)+\eta_1(s)\,dt)\perp\im d_{\AA(s)}$ for all $\eta=(\eta_0,\eta_1,0)\in\ker\mathcal D_{(A,\omega,0)}^{\ast}$ and $s\in\R$. The same relation holds true for $\eta=\partial_s\AA$. Furthermore, $\langle\eta_0(s)+\eta_1(s)\,dt,\partial_s\AA(s)\rangle=0$ for all $\eta=(\eta_0,\eta_1,0)\in\ker\mathcal D_{(A,\omega,0)}^{\ast}$ and $s\in\R$. 
\end{prop}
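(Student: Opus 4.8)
The plan is to verify the three assertions in turn, each reducing to a short computation combined with a structural fact established earlier. First I record the elementary identity
$\langle\eta_0+\eta_1\,dt,\,d_{\AA}\phi\rangle_{L^2(\Sigma)}=\langle d_A^{\ast}\eta_0-[\omega\wedge\eta_1],\,\phi\rangle_{L^2(\Sigma)}$
for all $\phi\in\Omega^0(\Sigma,\ad(P))$, obtained by integration by parts and $\ad$-invariance of the inner product; it says that $\eta_0+\eta_1\,dt\perp\im d_{\AA}$ precisely when the ``gauge-fixing expression'' $d_A^{\ast}\eta_0-[\omega\wedge\eta_1]$ vanishes — the $\Sigma\times S^1$-version of the reinterpretation in Remark \ref{rem:gaugefix}. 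Now let $\eta=(\eta_0,\eta_1,0)\in\ker\mathcal D_{(A,\omega,0)}^{\ast}$. Since each $B_f(s)$ is self-adjoint on $L^2(\Sigma)$ (cf.~the $p=2$ case of the proof of Theorem \ref{thm:Fredholmthm}), we have $\mathcal D_{(A,\omega,0)}^{\ast}=-\partial_s+B_f$, and since the holonomy perturbation does not enter the third row of $B_f$, the third component of $\mathcal D_{(A,\omega,0)}^{\ast}\eta=0$ reads $-\partial_s\eta_2-d_A^{\ast}\eta_0+[\omega\wedge\eta_1]=0$; with $\eta_2\equiv0$ this gives $d_A^{\ast}\eta_0-[\omega\wedge\eta_1]=0$, hence the first assertion by the identity above.

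For the second assertion, in temporal gauge the flow \eqref{pertEYM1} reads $\partial_s(A,\omega)=-\nabla(\mathcal J+h_f)(A,\omega)$. Since $\mathcal J$ is manifestly $\G(P)$-invariant and $h_f$ is $\G(P)$-invariant by construction, the $L^2$-gradient of $\mathcal J+h_f$ is pointwise orthogonal to the gauge-orbit directions $\{(d_A\phi,[\omega\wedge\phi])\mid\phi\in\Omega^0(\Sigma,\ad(P))\}=\im d_{\AA}$; hence $\partial_s\AA=\partial_sA+\partial_s\omega\,dt\perp\im d_{\AA}$. (Equivalently, one substitutes the flow equations into $\langle d_{\AA}\phi,\partial_s\AA\rangle_{L^2(\Sigma)}$ and integrates by parts, the terms involving $X_f$ and $Y_f$ cancelling by the infinitesimal form of the $\G(P)$-invariance of $h_f$.)

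For the third assertion, set $\xi:=(\partial_sA,\partial_s\omega,0)$; differentiating \eqref{pertEYM1} in temporal gauge with respect to $s$ shows that the first two components of $\mathcal D_{(A,\omega,0)}\xi$ vanish. Define $g(s):=\langle\eta(s),\xi(s)\rangle_{L^2(\Sigma)}$. Using $\partial_s\eta=B_f(s)\eta$ and self-adjointness of $B_f(s)$, one computes $g'(s)=\langle\partial_s\eta,\xi\rangle+\langle\eta,\partial_s\xi\rangle=\langle\eta,\partial_s\xi+B_f\xi\rangle=\langle\eta,\mathcal D_{(A,\omega,0)}\xi\rangle=\langle\eta_2,(\mathcal D_{(A,\omega,0)}\xi)_3\rangle=0$, the last step because $\eta_2\equiv0$. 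Thus $g$ is constant; since $\eta$ decays exponentially (it lies in the cokernel, cf.~Step \ref{step2} of the proof of Theorem \ref{thm:Fredholmthm}) and $\partial_s(A,\omega)$ decays exponentially (Theorem \ref{thm:mainexpconvergence}), $g(s)\to0$ as $s\to\pm\infty$, so $g\equiv0$; unwinding definitions, $g(s)=\langle\eta_0(s)+\eta_1(s)\,dt,\partial_s\AA(s)\rangle_{L^2(\Sigma)}$, which is the claim. The only genuine difficulty here is bookkeeping: matching the sign conventions in the third row of $B_{(A,\omega,0)}$ in \eqref{linoperator} with the infinitesimal gauge action on $\Sigma\times S^1$, and, in the third part, ensuring the constant value of $g$ is zero — which rests on the exponential-decay results already available for cokernel elements and for finite-energy flow lines.
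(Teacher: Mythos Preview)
Your proof is correct. The arguments for the second and third assertions are essentially the same as the paper's: for $\partial_s\AA\perp\im d_{\AA}$ the paper carries out the explicit computation you sketch parenthetically (using $d_A^{\ast}X_f(\AA)=0$ and $[\omega\wedge Y_f(\AA)]=0$ from gauge-invariance of $h_f$), and for the third claim the paper proves constancy of $s\mapsto\langle\hat\eta(s),\partial_s\AA(s)\rangle$ and invokes decay, exactly as you do.

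Where you genuinely differ is the first assertion. The paper projects to the first two components $\hat\eta=(\eta_0,\eta_1)$, observes $-\partial_s\hat\eta+B_{(A,\omega)}\hat\eta=0$ and (from gauge-invariance of \eqref{pertEYM}) $\partial_s d_{\AA}\phi+B_{(A,\omega)}d_{\AA}\phi=0$, and then shows $\frac{d}{ds}\langle\hat\eta,d_{\AA}\phi\rangle=0$ so that the inner product is a constant forced to vanish by the decay of $\hat\eta$. Your route is shorter and more direct: you read the orthogonality $d_A^{\ast}\eta_0-[\omega\wedge\eta_1]=0$ straight off the third component of $(-\partial_s+B_f)\eta=0$ using $\eta_2\equiv0$, bypassing the constancy-plus-decay argument entirely. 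This buys you a pointwise algebraic proof with no appeal to asymptotics; the paper's approach, on the other hand, has the virtue of being manifestly the same mechanism as the third assertion and would still work even if one only knew $\eta\in\ker\mathcal D^{\ast}$ without a priori control on the $\psi$-component.
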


\begin{proof}
Let $(\eta_0,\eta_1,0)\in\ker\mathcal D_{(A,\omega,0)}^{\ast}$. Denoting $\hat\eta\coloneqq(\eta_0,\eta_1)$ and
\begin{eqnarray*}
B_{(A,\omega)}(\alpha,v)\coloneqq(\ast[\omega\wedge\alpha]-\ast d_Av-\d\!X_f(\AA)(\alpha,v),\ast d_A\alpha-v+\d\!Y_f(\AA)(\alpha,v))
\end{eqnarray*}
this implies that $-\partial_s\hat\eta+B_{(A,\omega)}\hat\eta=0$. On the other hand, from gauge-invariance of Eq.~\eqref{pertEYM} it follows that $\partial_sd_{\AA}\ph+B_{(A,\omega)}d_{\AA}\ph=0$. Therefore, by symmetry of the operator $B_{(A,\omega)}$ it follows that
\begin{align*}
\frac{d}{ds}\langle\hat\eta,d_{\AA}\ph\rangle=&\langle\partial_s\hat\eta,d_{\AA}\ph\rangle+\langle\hat\eta,\partial_sd_{\AA}\ph\rangle\\
=&\langle B_{(A,\omega)}\hat\eta,d_{\AA}\ph\rangle+\langle\hat\eta,-B_{(A,\omega)}d_{\AA}\ph\rangle\\
=&0.
\end{align*} 
This shows that the inner product $\langle\hat\eta,d_{\AA}\ph\rangle$ is constant in $s$. Because $\hat\eta(s)$ converges to zero as $s\to\pm\infty$ this constant must be zero and the first claim follows. Let $\ph\in\Omega^0(\Sigma,\ad(P))$. Then the second claim follows from the calculation
\begin{align*}
\langle\partial_s A+\partial_s\omega\,dt,&d_A\ph+[\omega\wedge\ph]\wedge\,dt\rangle=\langle\partial_sA,d_A\ph\rangle+\langle\partial_s\omega,[\omega\wedge\ph]\rangle\\
=&\langle\ast d_A\omega+X_f(\AA),d_A\ph\rangle+\langle\omega-\ast F_A-Y_f(\AA),[\omega\wedge\ph]\rangle\\
=&\langle d_A^{\ast}\ast d_A\omega+d_A^{\ast}X_f(\AA),\ph\rangle+\langle\ast[\omega\wedge F_A]+[\omega\wedge Y_f(\AA)],\ph\rangle\\
=&0.
\end{align*}
The second equation holds by \eqref{pertEYM} and the fourth equation follows from
\begin{eqnarray*}
d_A^{\ast}X_f(\AA)=0,\qquad[\omega\wedge Y_f(\AA)]=0,
\end{eqnarray*}
together with the identity $d_A^{\ast}\ast d_A\omega=\ast d_Ad_A\omega=\ast[F_A\wedge\omega]$. This proves the second claim. Finally, because $\partial_s\AA$ satisfies the linearized equation $(\frac{d}{ds}+B_{(A,\omega)})\partial_s\AA=0$ and $\hat\eta=(\eta_0,\eta_1)$ satisfies by assumption $(-\frac{d}{ds}+B_{(A,\omega)})\hat\eta=0$ it follows by the same calculation as in the proof of the first claim that $\langle\hat\eta(s),\partial_s\AA(s)\rangle=0$ for all $s\in\R$.
\end{proof}

\section{Elliptic Yang--Mills homology}\label{sec:homology}

We fix a closed oriented Riemannian surface $(\Sigma,g)$, a compact Lie group $G$, a principal $G$-bundle $P\to\Sigma$, a regular value $a>0$ of $\J$, and an $a$-regular perturbation $h_f$. Associated to these data we define the elliptic Yang--Mills homology $HYM_{\ast}(\Sigma,P,g,a,f)$ as follows. Let
\begin{eqnarray*}
\mathcal R^a:=\frac{\crit^a(\J+h_f)}{\G(P)}
\end{eqnarray*}
denote the (finite) set of gauge equivalence classes of critical points of $\J+h_f$ of energy less than $a$. These generate a chain complex
\begin{eqnarray*}
CYM_{\ast}^a(\Sigma,P,g,f):=\sum_{[(A,\omega)]\in\mathcal R^a}\mathbbm Z_2\langle[(A,\omega)]\rangle
\end{eqnarray*}
with grading given by the index $\ind H_{A,f}$ of the perturbed Yang--Mills Hessian (cf.~Theorem \ref{thm:Fredholmthm} for further details). For a pair $[(A^-,\omega^-],[(A^+,\omega^+)]\in\mathcal R^a$ of critical points we let 
\begin{eqnarray*}
\mathcal M_f(A^-,\omega^-,A^+,\omega^+)=\frac{\widehat{\mathcal M}_f(A^-,\omega^-,A^+,\omega^+)}{\G(P)}
\end{eqnarray*}
be the moduli space as defined in \textsection \ref{sect:modulispFredholm}. It is a smooth manifold of dimension $\ind H_{A^-,f}-\ind H_{A^+,f}$. The group $\R$ acts on it freely by time-shifts. If this index difference equals $1$ it follows by compactness (cf.~Theorem \ref{thm:compactness}) that the quotient of $\mathcal M_f(A^-,\omega^-,A^+,\omega^+)$ modulo the $\R$-action consists of a finite number of points. For each $k\in\mathbbm N_0$ we hence obtain a well-defined boundary operator
\begin{eqnarray*}
\partial_k\colon CYM_k^a(\Sigma,P,g,f)\to CYM_{k-1}^a(\Sigma,P,g,f)
\end{eqnarray*}
as the linear extension of the map
\begin{eqnarray*}
\partial_kx\coloneqq\sum_{x'\in\mathcal R^a\atop\ind(x')=k-1}n(x,x')x',
\end{eqnarray*}
$x\in\mathcal R^a$ a critical point of index $\ind x=k$. Here $n(x,x')\in\mathbbm Z_2$ is defined for $x=[(A^-,\omega^-)]$ and $x'=[(A^+,\omega^+)]$ as the number of $\R$-equivalence classes of elements in $\mathcal M_f(A^-,\omega^-,A^+,\omega^+)$, counted modulo $2$, i.e.~   
\begin{eqnarray*}
n(x,x')\coloneqq\#\frac{\mathcal M_f(A^-,\omega^-,A^+,\omega^+)}{\R}\quad (\modulo 2).
\end{eqnarray*}

\begin{lem}\label{lem:cascadechainmap}
The sequence $\partial_{\ast}$ of homomorphisms satisfies $\partial_{\ast}\circ\partial_{\ast+1}=0$, i.e.~$(CYM_{\ast}^a(\Sigma,P,g,f),\partial_{\ast})$ is a chain complex.
\end{lem}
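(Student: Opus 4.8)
The plan is to follow the standard Floer-theoretic argument that $\partial^2 = 0$, adapted to the elliptic Yang--Mills setting. The key inputs are already available in the paper: the Fredholm index formula (Theorem~\ref{thm:Fredholmthm}), the compactness-up-to-broken-trajectories statement (Theorem~\ref{thm:compactness}), exponential decay (Theorem~\ref{thm:mainexpconvergence}), the transversality results (Theorems~\ref{thm:firsttransvers} and~\ref{thm:secondtransvers}), and the gluing statement implicit in the claim that the one-dimensional moduli spaces are smooth manifolds whose $\R$-quotients are finite.

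First I would fix critical points $x = [(A^-,\omega^-)]$ and $z = [(A^+,\omega^+)]$ in $\mathcal R^a$ with $\ind x = k$ and $\ind z = k-2$, and consider the two-dimensional moduli space $\mathcal M_f(A^-,\omega^-,A^+,\omega^+)$. By the transversality theorem this is a smooth manifold of dimension $\ind H_{A^-,f} - \ind H_{A^+,f} = 2$, and the $\R$-action by time-shifts is free (since connecting trajectories between distinct critical points are nonconstant), so $\mathcal M_f(A^-,\omega^-,A^+,\omega^+)/\R$ is a smooth one-dimensional manifold. The coefficient $(\partial_{k-1}\circ\partial_k x, z)$ counts, modulo $2$, the number of pairs $(y, u_1, u_2)$ where $y = [(A^0,\omega^0)]\in\mathcal R^a$ has $\ind y = k-1$ and $u_1 \in \mathcal M_f(A^-,\omega^-,A^0,\omega^0)/\R$, $u_2 \in \mathcal M_f(A^0,\omega^0,A^+,\omega^+)/\R$; by the transversality and index count these are exactly the broken trajectories with one intermediate critical point.

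The heart of the argument is then to identify this count with the number of boundary components of the compactification $\overline{\mathcal M_f(A^-,\omega^-,A^+,\omega^+)/\R}$. Compactness (Theorem~\ref{thm:compactness}), together with the index formula which forbids trajectories of negative expected dimension and hence (after transversality) forbids breaking at more than one intermediate critical point within a sublevel set, shows that every sequence in the one-dimensional quotient moduli space either has a convergent subsequence in the interior or converges to a single once-broken trajectory through some $y$ with $\ind y = k-1$. Conversely, the gluing construction shows every such once-broken trajectory is the limit of a unique (modulo $\R$) one-parameter family of genuine trajectories, i.e.\ arises as exactly one end of $\mathcal M_f(A^-,\omega^-,A^+,\omega^+)/\R$. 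Hence $\overline{\mathcal M_f(A^-,\omega^-,A^+,\omega^+)/\R}$ is a compact one-dimensional manifold with boundary, its boundary being in bijection with the set of once-broken trajectories counted by $\partial_{k-1}\circ\partial_k$. Since a compact one-manifold has an even number of boundary points, $(\partial_{k-1}\circ\partial_k x, z) = 0$ in $\Z_2$ for every $z$, which gives $\partial_{k-1}\circ\partial_k = 0$.

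The main obstacle is the gluing step, which is the one piece not explicitly established in the excerpt: one must show that given a broken trajectory $(u_1, u_2)$ through a nondegenerate intermediate critical point, there is a gluing parameter $\rho_0 > 0$ and a smooth embedding $(\rho_0,\infty) \to \mathcal M_f(A^-,\omega^-,A^+,\omega^+)$, $\rho \mapsto u_1 \#_\rho u_2$, converging to $(u_1,u_2)$ as $\rho\to\infty$, and that every interior sequence approaching the broken configuration eventually lies in the image of this map. This rests on the surjectivity of $\mathcal D_{(A,\omega,0)}$ (transversality), the exponential decay estimates of \textsection\ref{sect:exponentialdecay} which control the pre-gluing error, and a Newton--Picard iteration using the uniform bound on a right inverse of the linearized operator; these are entirely analogous to the constructions in \cite{SalWeh} and in the standard references on Morse--Floer homology, so I would invoke them rather than reproduce them. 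A secondary point to check carefully is that the energy/sublevel-set bound \eqref{eq:boundedenergyass} is preserved along the relevant families, so that all intermediate critical points indeed lie in $\crit^a(\J+h_f)$ and the finiteness statements apply; this follows from the monotonicity of $\J + h_f$ along gradient flow lines together with the energy identity used in the proof of Theorem~\ref{thm:compactness}.
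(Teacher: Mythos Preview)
Your proposal is correct and follows exactly the standard Floer-theoretic argument the paper has in mind; in fact the paper's own proof simply says that the chain-map property ``follows from standard arguments invoking Theorems~\ref{thm:compactness} (compactness) and~\ref{thm:mainexpconvergence} (exponential decay),'' so your write-up is a more detailed version of the same route. Your observation that gluing is used but not explicitly carried out in the paper is accurate---the paper tacitly absorbs it into ``standard arguments'' as well.
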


\begin{proof}
The property of a chain map follows from standard arguments invoking Theorems  \ref{thm:compactness} (compactness) and \ref{thm:mainexpconvergence} (exponential decay of finite energy gradient flow lines).  
\end{proof}

We define the {\bf{elliptic Yang--Mills homology at level $a$ associated with the data}} $(\Sigma,P,g,f)$ to be the collection of abelian groups
\begin{eqnarray*}
HYM_k^a(\Sigma,P,g,f)\coloneqq\frac{\ker\partial_k}{\im\partial_{k+1}}\qquad(k\in\mathbbm N_0).
\end{eqnarray*}
We conclude this section with a number of remarks.
\begin{compactitem}
\item
We expect the usual cobordism arguments to show independence of the homology groups $HYM_{\ast}^a(\Sigma,P,g,f)$ on the choice of the Riemannian metric $g$ and $a$-regular perturbation $f$ used to define it.
\item
To keep the exposition short we developed here a version of elliptic Yang--Mills homology using coefficients in $\Z_2$. Defining a variant of it with coefficient ring $\Z$ requires to deal with oriented moduli spaces.
\item
It would be very interesting to compare elliptic Yang--Mills homology with the Morse homology obtained from the Yang--Mills gradient flow on a Riemannian surface and first considered by Atiyah and Bott \cite{AB}. The chain complexes in both cases are generated by (perturbed) Yang--Mills connections $A$ (respectively pairs $(A,\omega)$ where $A$ is a (perturbed) Yang--Mills connection and $\omega=\ast F_A$), and hence coincide. However the boundary operators are very different, involving a parabolic equation in the classical situation, in contrast to the elliptic system in the approach presented here. We shall comment more on this point in \textsection \ref{sect:RelshipwithparabolicYM}.
\end{compactitem}

\section{Relationship to parabolic Yang--Mills Morse homology}\label{sect:RelshipwithparabolicYM}

We now discuss a partly conjectural relationship between elliptic Yang--Mills homology and the by now classical Morse homology theory based on the $L^2$-gradient flow of the Yang--Mills functional over Riemann surfaces. The latter will be called {\bf{parabolic Yang--Mills homology}} for brevity. We refer to the seminal article \cite{AB} by Atiyah and Bott which initiated (amongst other things) the study of the Yang--Mills functional on Riemann surfaces from a Morse theoretical perspective. Recall that the (unperturbed) Yang--Mills functional is the map
\begin{eqnarray*}
\YM\colon\A(P)\to\R,\qquad\YM(A)=\frac{1}{2}\int_{\Sigma}|F_A|^2\,\dvol_{\Sigma}.
\end{eqnarray*}
The following observation provides the starting point of our discussion.
 
\begin{lem}\label{lem:increaseenergy}
For all $(A,\omega)\in\A(P)\times\Omega^0(\Sigma,\ad(P))$ there holds the energy inequality
\begin{eqnarray}\label{eq:increaseenergy}
\mathcal J(A,\omega)\leq\YM(A),
\end{eqnarray}
with equality if and only if $\omega=\ast F_A$.
\end{lem}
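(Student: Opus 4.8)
The plan is to reduce the inequality to a pointwise completion of the square. First I would rewrite the integrand of $\J$ in a more symmetric form. Since $\dim\Sigma=2$, the Hodge star maps $0$-forms to $2$-forms and satisfies $\ast\ast=\id$ on $\Omega^0(\Sigma,\ad(P))$, and it is a pointwise isometry with respect to the given inner products. Hence
\begin{eqnarray*}
\langle F_A,\ast\omega\rangle=\langle\ast F_A,\ast\ast\omega\rangle=\langle\ast F_A,\omega\rangle
\end{eqnarray*}
pointwise on $\Sigma$, so that
\begin{eqnarray*}
\J(A,\omega)=\int_{\Sigma}\left(\langle\ast F_A,\omega\rangle-\tfrac12|\omega|^2\right)\dvol_{\Sigma}.
\end{eqnarray*}

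Next I would complete the square in the fibrewise inner product: at each point of $\Sigma$,
\begin{eqnarray*}
\langle\ast F_A,\omega\rangle-\tfrac12|\omega|^2=\tfrac12|\ast F_A|^2-\tfrac12|\ast F_A-\omega|^2=\tfrac12|F_A|^2-\tfrac12|\ast F_A-\omega|^2,
\end{eqnarray*}
where in the last step I use again that $\ast$ is a pointwise isometry, whence $|\ast F_A|=|F_A|$. Integrating this identity over $\Sigma$ gives the exact formula
\begin{eqnarray*}
\J(A,\omega)=\YM(A)-\tfrac12\int_{\Sigma}|\ast F_A-\omega|^2\,\dvol_{\Sigma},
\end{eqnarray*}
from which \eqref{eq:increaseenergy} is immediate, the correction term being nonnegative.

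For the equality case, $\J(A,\omega)=\YM(A)$ forces $\int_{\Sigma}|\ast F_A-\omega|^2\,\dvol_{\Sigma}=0$, hence $\ast F_A-\omega=0$ almost everywhere, and then everywhere by continuity (smoothness) of $A$ and $\omega$; conversely $\omega=\ast F_A$ makes the correction term vanish identically. I do not expect any genuine obstacle here: the only points requiring mild care are the Hodge star sign conventions in dimension two (namely $\ast\ast=\id$ on $0$-forms) and the compatibility of the metric on $\ad(P)$-valued forms with $\ast$, both of which are part of the standing setup. Everything else is the elementary inequality $2\langle a,b\rangle-|b|^2\le|a|^2$ with equality iff $a=b$, applied fibrewise and then integrated.
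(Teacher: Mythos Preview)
Your proof is correct. The paper argues via two inequalities---first Cauchy--Schwarz $\langle F_A,\ast\omega\rangle\le|F_A|\,|\omega|$, then $|F_A|\,|\omega|-\tfrac12|\omega|^2\le\tfrac12|F_A|^2$---whereas you complete the square directly and obtain the exact identity
\[
\J(A,\omega)=\YM(A)-\tfrac12\|\ast F_A-\omega\|_{L^2(\Sigma)}^2.
\]
This is a mild but genuine improvement: the paper's chain of inequalities requires analyzing equality in both steps (pointwise proportionality from Cauchy--Schwarz, then equality of norms), and the paper simply asserts that ``the case of equality can easily be read off'', while your identity makes the equality case $\omega=\ast F_A$ immediate and also exhibits $\J(A,\cdot)$ as a downward parabola with vertex at $\ast F_A$. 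Both approaches are elementary; yours is the cleaner packaging.
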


\begin{proof}
The claim follows, using the Cauchy--Schwartz inequality, from
\begin{multline*}
\J(A,\omega)=\int_{\Sigma}\langle F_A,\ast\omega\rangle-\frac{1}{2}|\omega|^2\,\dvol_{\Sigma}\leq\int_{\Sigma}|F_A||\omega|-\frac{1}{2}|\omega|^2\,\dvol_{\Sigma}\\
\leq\int_{\Sigma}\frac{1}{2}|F_A|^2\,\dvol_{\Sigma}=\YM(A).
\end{multline*} 
The case of equality can easily be read off from this estimate.
\end{proof}

We now consider the $\G(P)$-equivariant map
\begin{eqnarray*}
\theta\colon\A(P)\to\A(P)\times\Omega^0(\Sigma,\ad(P)),\qquad A\mapsto(A,\ast F_A).
\end{eqnarray*}
As initially remarked, a pair $(A,\omega)$ is a critical point of $\J$ if and only if $A$ is a Yang--Mills connection and $\omega=\ast F_A$. Hence the map $\theta$ induces a $\G(P)$-equivariant bijection between the sets $\crit(\YM)$ and $\crit(\J)$. By Lemma \ref{lem:increaseenergy} it decreases energy in the sense that
\begin{eqnarray}\label{eq:increaseenergy}
\J\circ\theta(A)\leq\YM(A)
\end{eqnarray}
for all $A\in\A(P)$. This observation plays a crucial role in the following construction. The idea is to couple solutions of the Yang--Mills gradient flow equation
\begin{eqnarray}\label{eq:L2YMG}
0=\partial_sA+d_A^{\ast}F_A
\end{eqnarray}
on the half-infinite interval $(-\infty,0]$ with solutions of the (unperturbed) equation \eqref{pertEYM1} on the half-infinite interval $[0,\infty)$. We are thus lead to define for critical points $A^-\in\crit(\YM)$ and $(B^+,\omega^+)\in\crit(\J)$ the set
\begin{multline*}
\widehat{\mathcal M}_{\textrm{hyb}}(A^-;B^+,\omega^+)\coloneqq\\
\Big\{(A;B,\omega)\mid A\,\textrm{satisfies}\,\eqref{eq:L2YMG}\, \textrm{on} \,(-\infty,0)\times \Sigma,\lim_{s\to-\infty}A(s)=A^-,\\
(B,\omega)\,\textrm{satisfies}\,\eqref{pertEYM1}\, \textrm{on} \,(0,\infty)\times \Sigma,\lim_{s\to\infty}(B(s),\omega(s))=(B^+,\omega^+),\\
(B(0),\omega(0))=\theta(A(0))\Big\},
\end{multline*}
and call the quotient
\begin{eqnarray*}
\mathcal M_{\hybr}(A^-;B^+,\omega^+)\coloneqq\frac{\widehat{\mathcal M}_{\textrm{hyb}}(A^-;B^+,\omega^+)}{\G(P)}
\end{eqnarray*}
{\bf{hybrid moduli space}} (of connecting trajectories between $A^-$ and $(B^+,\omega^+)$). We refer to the articles \cite{AbSchwarz,Swoboda2} for related constructions of hybrid moduli spaces in various different contexts. The significance of the moduli spaces $\mathcal M_{\hybr}(A^-;B^+,\omega^+)$ introduced here is that they are supposed to give rise to a chain map $\Theta$ between the Morse complexes defined from the elliptic and the parabolic Yang--Mills flow equations. Namely, after introducing a suitable perturbation scheme (which for the Yang--Mills gradient flow has been done in \cite{Swoboda1}) we expect these hybrid moduli spaces to be smooth manifolds of dimension equal to the index difference $\ind H_{A^-}-\ind H_{B^+}$ of the corresponding Yang--Mills Hessians. As in \cite{AbSchwarz,Swoboda2} they are expected to admit compactifications by adding configurations of broken trajectories. In particular, in the case of index difference equal to zero, $\mathcal M_{\hybr}(A^-;B^+,\omega^+)$ is a finite set and we may then define $\#\,\mathcal M^{\hybr}(A^-;B^+,\omega^+)$ to be the number of elements of $\mathcal M^{\hybr}(A^-;B^+,\omega^+)$ (again counted modulo $2$ to avoid orientation issues). For the remainder of this section we fix a regular value $a>0$ of $\YM$ and let $\crit^a(\YM)$ denote the set of critical points $A$ of $\YM$ with $\YM(A)\leq a$. By the above considerations, we obtain a well-defined map $\Theta\coloneqq(\Theta_k)_{k\in\mathbbm N_0}$, where the homomorphism $\Theta_k$ is defined by linear extension of the map   
\begin{eqnarray*}
\Theta_k(A^-)\coloneqq\sum_{(B^+,\omega^+)\in\crit^a(\J)\atop\ind H_{B^+}=k}\#\,\mathcal M_{\hybr}(A^-;B^+,\omega^+)\cdot(B^+,\omega^+),
\end{eqnarray*} 
where $A^-\in\crit^a(\YM)$ with $\ind H_{A^-}=k$. By standard arguments, the map $\Theta$ is a chain map between the Morse complexes generated by the sets $\crit^a(\YM)$ and $\crit^a(\J)$. The crucial observation, first made in a related situation in \cite{AbSchwarz}, cf.~also \cite{Swoboda2}, is that the homomorphism $\Theta$ in each degree $k$ can be represented by an invertible upper-triangular matrix of the form
\begin{eqnarray*}
(\Theta_{ij}^k)_{1\leq i,j\leq m}=\left(\begin{array}{ccccc} 1&\ast&\cdots&\cdots&\ast\\0& 1&\ddots&&\vdots\\\vdots&\ddots&\ddots&\ddots&\vdots\\\vdots&&\ddots& 1&\ast\\0&\cdots&\cdots&0& 1\end{array}\right)\in\Z_2^{m\times m},
\end{eqnarray*}
and hence is invertible in $\Z_2^{m\times m}$. The existence of such an invertible upper-triangular matrix $(\Theta_{ij}^k)_{1\leq i,j\leq m}$ can be seen as follows. As a set of generators of the chain group of degree $k\in\N_0$ of the parabolic Yang--Mills Morse complex we choose the critical points $(A_1,\ldots,A_m)\in\crit^a(\YM)$  of index $\ind H_{A_j}=k$, ordered by increasing action. We then define the $m$-tuple
\begin{eqnarray*}
((B_1,\omega_1),\ldots,(A_m,\omega_m))\coloneqq(\theta(A_1),\ldots,\theta(A_m)),
\end{eqnarray*}
which as explained above is a set of generators of the chain group of degree $k$ of the elliptic Yang--Mills Morse complex. Inequality \eqref{eq:increaseenergy} (which in this case is an equality) moreover shows that this $m$-tuple is ordered by increasing action, too. It follows from Lemma \ref{lem:increaseenergy} and inequality \eqref{eq:L2YMG} by an elementary argument that
\begin{eqnarray}\label{eq:emptymodspace}
\mathcal M_{\hybr}(A_i^-;B_j^+,\omega_j^+)=\emptyset\qquad\textrm{if}\quad i<j.
\end{eqnarray}
Namely, along any flow line in $\widehat{\mathcal M}_{\hybr}(A_i^-;B_j^+,\omega_j^+)$ energy is strictly increasing and therefore inequality \eqref{eq:L2YMG} implies \eqref{eq:emptymodspace}. Furthermore, each moduli space $\mathcal M_{\hybr}(A_i^-;B_i^+,\omega_i^+)$ is represented by precisely one element, the concatenation of the constant flow lines $A\equiv A_i^-$ and $(B,\omega)\equiv(B_i^+,\omega_i^+)$. The invertible chain homomorphism $\Theta^k$ descends in each degree $k\in\N_0$ to an isomorphism between the $k$-th elliptic and parabolic Yang--Mills homology groups (at level $a$). The details of the construction of this isomorphism are similar to those being carried out in \cite{AbSchwarz,Swoboda2}. We leave them to a forthcoming publication.

\section{Three-dimensional product manifolds}\label{sec:special case}

So far, we defined the elliptic Yang--Mills homology in the case $n=2$ which allows the equations to be nicely elliptic. In the subsequent sections we discuss two extensions of this theory to the case $n\geq3$. First we explain how two-dimensional elliptic Yang--Mills homology may be applied to the special case of three-dimensional products $Y=\Sigma\times S^1$. Then in \textsection \ref{sec:restrflow} we show how to restore ellipticity by restricting the space $X=\A(P)\times\Omega^{n-2}(Y,\ad(P))$ to a suitable Banach submanifold.\\
\noindent\\
We start by recalling some facts concerning the moduli space $\mathcal M^{\gamma}(P)$ of flat connections on a principal bundle $P$ over a Riemann surface $(\Sigma,g_{\Sigma})$ of genus $\gamma$. It was investigated for the first time in 1983 by Atiyah and Bott in \cite{AB}. They noticed that the conformal structure of $\Sigma$ gives rise to a Riemannian metric and an almost complex structure on $\mathcal M^{\gamma}(P)$; moreover, if a nontrivial principal $\SO(3)$-bundle $P$ is chosen, then the moduli space $\mathcal M^{\gamma}(P)$, defined as the quotient of the space of the flat connections $\mathcal A_0(P)\subseteq\mathcal A(P)$ and the identity component $\mathcal G_0(P)$ of the group of gauge transformations, is a compact K\"ahler manifold without singularities of dimension $6\gamma-6$ (cf.~\cite{DostSal1}). In the nineties some aspects of the topology of $\mathcal M^{\gamma}(P)$ were investigated by Dostoglou and Salamon (cf.~\cite{DostSal1,DostSal2,DostSal3,Sal3}), who proved among other results the Atiyah--Floer conjecture in this context, as well as by Hong (cf.~\cite{Hong}).\\
\noindent\\ 
We fix a three-dimensional product manifold $Y=\Sigma \times S^1$, $(\Sigma,g_{\Sigma})$ a Riemann surface of genus $\gamma$, with the partially rescaled product metric $\varepsilon^2 g_\Sigma \oplus g_{S^1}$, for a parameter $\varepsilon>0$. We include in the discussion here only nontrivial principal $\SO(3)$-bundles P and $\hat P=P\times S^1$ over $\Sigma$ and $Y$, respectively. The motivation for considering this setup comes from the fact that, as we will see, the perturbed Yang--Mills connections on $P$ and the resulting elliptic Yang--Mills homology groups are strongly related to the perturbed geodesics and the homology of the free loop space $\mathcal L\mathcal M^{\gamma}(P)$ associated with the moduli space of flat connections $\mathcal M^{\gamma}(P)$. In fact, by results of the first author (cf.~\cite{Janner3}), there is a bijection between the perturbed Yang--Mills connections on the bundle $\hat P$ and the perturbed geodesics on $\mathcal M^{\gamma}(P)$. Furthermore, he showed that the Morse homologies, defined from the perturbed parabolic Yang--Mills gradient flow on $\hat P \to \Sigma \times S^1$ and the heat flow on $\mathcal M^{\gamma}(P)$, are isomorphic, provided that $\varepsilon$ is small enough and that an energy bound $b$ is chosen (cf.~\cite{Janner,Janner2}). Hence we have:
\begin{equation}\label{eq:isomorphism}
HM_*\big(\mathcal L^b\mathcal M^{\gamma}(P),\mathbb Z_2\big)\cong HM_*\big(\mathcal A^{\varepsilon,b}(\hat P)/\mathcal G_0(\hat P),\mathbb Z_2\big)
\end{equation}
where $\mathcal L^b\mathcal M^{\gamma}(P)\subseteq \mathcal L\mathcal M^{\gamma}(P)$ and $\mathcal A^{\varepsilon,b}(\hat P)\subseteq \mathcal A(\hat P)$, respectively, denote the subsets with energies bounded from above by $b$.\\
\noindent\\
The heat flow homology appearing on the left-hand side of \eqref{eq:isomorphism}  is well-defined in any dimension due to work of Salamon and Weber (cf.~\cite{SalWeb,Web1,Web2}). In contrast, the Yang--Mills gradient flow exists only if the base manifold $Y$ is two- or three-dimensional, or if it has a symmetry of codimension three (cf.~\cite{Rade,Rade2}). Furthermore, apart from the two-dimensional case, the Morse--Smale transversality is not yet proven and thus the homology $HM_{\ast}\big(\mathcal A^{\varepsilon,b}(P)/\mathcal G_0(P),\Z_2\big)$ is presently not defined in the general case. One notable exception are products $Y=\Sigma\times S^1$. Here the gauge equivalence classes of gradient flow lines are in a bijective relation with the gauge equivalence classes of heat gradient flow lines on $\mathcal L\mathcal M^{\gamma}(P)$ provided that $\varepsilon$ is small enough and thus, the homology $HM_{\ast}\big(\mathcal A^{\varepsilon,b}(P)/\mathcal G_0(P),\Z_2\big)$ is well defined.\\
\noindent\\
As follows from works of Viterbo (cf.~\cite{Viterbo}), Salamon and Weber (cf.~\cite{SalWeb}) and Abbondandolo and Schwarz (cf.~\cite{AbSchwarz,AbSchwarz1}), the Morse homology of $\mathcal L^b\mathcal M^{\gamma}(P)$ is isomorphic to Floer homology of the cotangent bundle $T^*\mathcal M^{\gamma}(P)$ defined via the Hamiltonian $H_V$ given by the sum of kinetic and potential energy and considering only orbits with action bounded by $b$.  Moreover, Weber (cf.~\cite{Web1}) showed that the Morse homology of the free loop space defined from the heat flow is isomorphic to its singular homology. Summarizing we therefore have (in part conjecturally) the following isomorphisms of abelian groups:
\begin{equation*}
\begin{array}{ccc}
H_*\big(\mathcal L^b\mathcal M^{\gamma}(P)\big) &&H_*\big(\mathcal A^{\varepsilon,b}(\hat P)/\mathcal G_0(\hat P)\big)\\
\cong&&\stackrel {?}\cong\\
HM_*\big(\mathcal L^b\mathcal M^{\gamma}(P)\big)&\cong& HM_*\big(\mathcal A^{\varepsilon,b}(\hat P)/\mathcal G_0(\hat P)\big)\\
\cong&&\\
HF^b_*\big(T^*\mathcal M^{\gamma}(P), H_V\big)&&
\end{array}
\end{equation*}
It is still an open question (marked with an interrogation point in the above diagram) whether the Morse homology of $\mathcal A^{\varepsilon,b}(\hat P)/\mathcal G_0(\hat P)$ defined from the $L^2$-gradient flow of the Yang--Mills functional is isomorphic to the singular homology. We may also ask how our newly defined elliptic Yang--Mills homology fits into this picture. To be specific, we consider the elliptic Yang--Mills homology $HYM^b(\Sigma\times S^1,\varepsilon^2 g_\Sigma \oplus g_{S^1},f)$ defined from the functional $\mathcal J+h_f$ on $\mathcal A(\hat P)\times \Omega^1(\Sigma\times S^1,\ad(P))$, which can be seen as the Yang--Mills analogue of the cotangent bundle. In this special case it might be possible to show a bijective relation between the gauge equivalence classes of the elliptic Yang-Mills flow lines between stationary points and those of the parabolic Yang-Mills or of the Floer homology setup, provided that $\varepsilon$ is small enough, by an analogous argument as in \cite{Janner} or in \cite{SalWeb}. We leave the proof to a forthcoming publication. This would then define $HYM^b(\Sigma\times S^1,\varepsilon^2 g_\Sigma \oplus g_{S^1},f)$ and lead to an isomorphismus between it, $HM_*\big(\mathcal A^{\varepsilon,b}(\hat P)/\mathcal G_0(\hat P)\big)$ and $HF^b_*\big(T^*\mathcal M^{\gamma}(P), H_V\big)$, considering $\Z_2$ coefficients homologies:
\begin{equation*}
\begin{array}{ccc}
H_*\big(\mathcal L^b\mathcal M^{\gamma}(P)\big) &&\\
\cong&&\\
HM_*\big(\mathcal L^b\mathcal M^{\gamma}(P)\big)&\cong& HM_*\big(\mathcal A^{\varepsilon,b}(\hat P)/\mathcal G_0(\hat P)\big)\\
\cong&& \stackrel {?}\cong\\
HF^b_*\big (T^*\mathcal M^{\gamma}(P), H_V\big)&\stackrel {?}\cong& HYM^b(\Sigma\times S^1,\varepsilon^2 g_\Sigma \oplus g_{S^1},f)
\end{array}
\end{equation*}

\section{Restricted flow}\label{sec:restrflow}

In this final section we discuss a modification and extension of Eq.~\eqref{pertEYM1} to base manifolds $Y$ of dimension $n\geq3$. It is motivated from the shortcoming that the linearization of Eq.~\eqref{pertEYM1} is an elliptic system only in dimension $n=2$, even after imposing gauge-fixing conditions.

\begin{prop}\label{prop:ellipticsystem}
Let $I$ be an open interval and $(A,\omega,\Psi)\in C^{\infty}(I,\A(P)\times\Omega^{n-2}(Y,\ad(P))\times\Omega^0(Y,\ad(P)))$. Then the linear operator $\nabla_s+B_{(A,\omega,\Psi)}$ associated with $(A,\omega,\Psi)$ (cf.~\eqref{linoperator}) is elliptic if and only if $n=2$. 
\end{prop}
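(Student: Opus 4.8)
The plan is to reduce the statement to the invertibility of a principal symbol, which is a pointwise linear-algebra question on the fibres of $\Lambda^1T_y^\ast Y\oplus\Lambda^{n-2}T_y^\ast Y\oplus\Lambda^0$. Note first that the zeroth-order terms of $B_{(A,\omega,\Psi)}$ --- that is, $\ast[\omega\wedge\,\cdot\,]$, $-\mathbbm 1$ and $\pm[\omega\wedge\,\cdot\,]$ --- together with the term $[\Psi\wedge\,\cdot\,]$ hidden in $\nabla_s$ do not contribute to the principal symbol; hence ellipticity of $\nabla_s+B_{(A,\omega,\Psi)}$ is independent of $(A,\omega,\Psi)$, and amounts to the following: for every $y\in Y$ and every nonzero $(\tau,\xi)\in\R\oplus T_y^\ast Y$ the leading symbol $\sigma(\tau,\xi)$ of $\nabla_s+B_{(A,\omega,\Psi)}$ is an invertible endomorphism. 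Writing $e_\xi:=\xi\wedge\,\cdot\,$ for exterior multiplication and $\iota_\xi$ for the contraction with $\xi^\sharp$, and using that $\partial_s$, $d_A$, $d_A^\ast$ and $\ast d_A$ are all first-order operators --- so that their symbols all acquire the \emph{same} factor $i$ --- one finds $\sigma(\tau,\xi)=i\cdot P(\tau,\xi)$ with
\[
P(\tau,\xi)=\begin{pmatrix}\tau\,\mathrm{Id}&-\ast e_\xi&-e_\xi\\ \ast e_\xi&\tau\,\mathrm{Id}&0\\ \iota_\xi&0&\tau\,\mathrm{Id}\end{pmatrix}.
\]
(For $n\geq3$ the $(-1)^{n+1}$-sign conventions of \eqref{pertEYM1} may alter some of the off-diagonal $\ast$-blocks, which is immaterial below.) If $\xi=0$ then $P=\tau\,\mathrm{Id}$ is invertible whenever $\tau\neq0$, so in both implications we may assume $\xi\neq0$ and, after rescaling, $|\xi|=1$.

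\emph{The case $n=2$ yields ellipticity.} On a surface $\{\xi,\ast\xi\}$ is an orthonormal basis of $\Lambda^1T_y^\ast\Sigma$, and $\Lambda^{n-2}=\Lambda^0$. Let $(\alpha,v,\psi)\in\ker P(\tau,\xi)$ and write $\alpha=a\,\xi+b\,\ast\xi$. Using $\iota_\xi\alpha=a$, $\ast(\xi\wedge\alpha)=b$, $\ast(\xi\wedge v)=v\,\ast\xi$ and $\xi\wedge\psi=\psi\,\xi$, the three rows of $P(\tau,\xi)(\alpha,v,\psi)=0$ become the scalar equations $\tau a=\psi$ and $\tau b=v$ (first row), $b=-\tau v$ (second row), and $a=-\tau\psi$ (third row). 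Hence $\psi(1+\tau^2)=0$ and $v(1+\tau^2)=0$, which forces $\psi=v=a=b=0$ and thus $\alpha=0$. So $P(\tau,\xi)$ is injective, hence invertible, and $\nabla_s+B_{(A,\omega,\Psi)}$ is elliptic. This is consistent with the Remark following \eqref{pertEYM1}: the composition $(\nabla_s+B)^\ast(\nabla_s+B)$ then has the Hodge Laplacian $-\partial_s^2+\Delta_A$ as leading part.

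\emph{The case $n\geq3$ destroys ellipticity.} Fix $y$ and $\xi\neq0$. Since $\Lambda^{n-3}T_y^\ast Y\neq0$ and exterior multiplication $e_\xi\colon\Lambda^{n-3}T_y^\ast Y\to\Lambda^{n-2}T_y^\ast Y$ is not identically zero (its image has dimension $\binom{n-1}{2}>0$), we may pick $w\in\Lambda^{n-3}T_y^\ast Y$ with $v:=\xi\wedge w\neq0$; then $\xi\wedge v=0$. Taking $\tau=0$, the vector $(0,v,0)$ lies in $\ker P(0,\xi)$: the first row equals $-\ast(\xi\wedge v)=0$, while the second and third rows only involve the zero $1$-form $\alpha$ and the vanishing $(3,2)$-block of $P$. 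Thus $\sigma(0,\xi)$ is not invertible, and $\nabla_s+B_{(A,\omega,\Psi)}$ fails to be elliptic.

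There is no substantial obstacle here; the only delicate point is the bookkeeping of the symbol --- correctly isolating the first-order part of $B_{(A,\omega,\Psi)}$ and ensuring that $\partial_s$ and the de Rham operators enter with a common factor $i$, so that in the $n=2$ computation the coefficient $1+\tau^2$ does not spuriously degenerate to $1-\tau^2$.
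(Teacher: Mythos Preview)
Your proof is correct. Both directions are handled cleanly: the $n=2$ symbol computation is accurate (including the sign bookkeeping that produces $1+\tau^2$ rather than $1-\tau^2$), and for $n\geq3$ the kernel element $(0,\xi\wedge w,0)$ at $\tau=0$ is exactly the right witness to non-ellipticity.

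Your route differs from the paper's. Rather than computing the principal symbol of $\nabla_s+B$ directly, the paper squares the operator and observes that
\[
-\nabla_s^2+B_{(A,\omega,\Psi)}^2=\begin{pmatrix}-\nabla_s^2+\Delta_A&0&0\\0&-\nabla_s^2+d_A^{\ast}d_A&0\\0&0&-\nabla_s^2+\Delta_A\end{pmatrix}+R_{(A,\omega,\Psi)}
\]
with $R$ of order one; the middle block acts on $(n-2)$-forms, and $d_A^\ast d_A$ equals the full Hodge Laplacian $\Delta_A$ there if and only if $n=2$. This is of course the same phenomenon you exploit---your kernel element $v=\xi\wedge w$ lies in $\ker e_\xi$, which is precisely the symbol-level kernel of $d_A^\ast d_A$ that the missing $d_Ad_A^\ast$ piece would have killed. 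Your argument is more self-contained and avoids the Weitzenb\"ock-style squaring; the paper's version has the advantage of immediately displaying the second-order operator one would use for elliptic estimates, which is why it is invoked elsewhere in the text.
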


\begin{proof}
It is straightforward to verify the assertion from a calculation of the principal symbol of the operator $\nabla_s+B_{(A,\omega,\Psi)}$. As this requires to introduce some notation we instead prove the claim for the operator  
\begin{eqnarray*}
(\nabla_s+B_{(A,\omega,\Psi)})(\nabla_s+B_{(A,\omega,\Psi)})^{\ast}=-\nabla_s^2+B_{(A,\omega,\Psi)}^2.
\end{eqnarray*}
A short calculation shows that
\begin{eqnarray*}
-\nabla_s^2+B_{(A,\omega,\Psi)}^2=\begin{pmatrix}-\nabla_s^2+\Delta_A&0&0\\0&-\nabla_s^2+d_A^{\ast}d_A&0\\0&0&-\nabla_s^2+\Delta_A\end{pmatrix}+R_{(A,\omega,\Psi)},
\end{eqnarray*}
where $R_{(A,\omega,\Psi)}$ is a differential operator of order one. In the case $n=2$ it follows that the entry $d_A^{\ast}d_A$ appearing in the above matrix is equal to $\Delta_A$. Then all the three components of the leading order term of $-\nabla_s^2+B_{(A,\omega,\Psi)}^2$ are Laplacians on one of the bundles $\Omega^1(I\times Y,\ad(I\times P))$ or $\Omega^0(I\times Y,\ad(I\times P))$, showing that this operator is elliptic. This fails to be the case if $n\geq3$ and hence the claim follows.
\end{proof}

To overcome the lack of ellipticity of its linearization in dimension $n\geq3$ we introduce the following modification of Eq.~\eqref{pertEYM1}. This modification arises as the formal $L^2$-gradient flow equation associated with the restriction of the functional $\J$ to the infinite-dimensional manifold (with singularities)
\begin{eqnarray*}
X_1\coloneqq\{(A,\omega)\in X\mid d_A^{\ast}\omega=0\},
\end{eqnarray*}
where we let $X\coloneqq\A(P)\times\Omega^{n-2}(Y,\ad(P))$. Note that in dimension $n=2$ it follows that $X_1=X$, and the system \eqref{pertEYM1} studied so far appears as a special case of the restricted flow equations which we shall introduce next. The tangent space of $X_1$ at $(A,\omega)\in X_1$ is $T_{(A,\omega)}X_1=\ker L_{(A,\omega)}$, where we define
\begin{multline*}
L_{(A,\omega)}\colon\Omega^1(Y,\ad(P))\oplus\Omega^{n-2}(Y,\ad(P))\to\Omega^{n-3}(Y,\ad(P)),\\
(\alpha,v)\mapsto d_A^{\ast}v+\ast[\ast\omega\wedge\alpha].
\end{multline*}
The formal adjoint of $L_{(A,\omega)}$ is the operator
\begin{multline*}
L_{(A,\omega)}^{\ast}\colon\Omega^{n-3}(Y,\ad(P))\to\Omega^1(Y,\ad(P))\oplus\Omega^{n-2}(Y,\ad(P)),\\
\Psi\mapsto((-1)^{n+1}\ast[\ast\omega\wedge\Psi],d_A\Psi).
\end{multline*}
Let $(A,\omega)\in X_1$. Because the condition $d_A^{\ast}\omega=0$ is gauge-invariant it follows that the image of the map 
\begin{eqnarray*}
d_{(A,\omega)}\colon\Omega^0(Y,\ad(P))\to\Omega^1(Y,\ad(P))\oplus\Omega^{n-2}(Y,\ad(P)),\qquad\ph\mapsto(d_A\ph,[\omega\wedge\ph]) 
\end{eqnarray*}
is contained in $\ker L_{(A,\omega)}$. We therefore obtain an elliptic complex
\begin{eqnarray*}
0\longrightarrow\Omega^0(Y,\ad(P))\overset{d_{(A,\omega)}}{\longrightarrow}\Omega^1(Y,\ad(P))\oplus\Omega^{n-2}(Y,\ad(P))\overset{L_{(A,\omega)}}{\longrightarrow}\Omega^{n-3}(Y,\ad(P))\longrightarrow0
\end{eqnarray*} 
together with a Laplace operator $D_{(A,\omega)}\coloneqq d_{(A,\omega)}d_{(A,\omega)}^{\ast}+L_{(A,\omega)}^{\ast}L_{(A,\omega)}$. Any $\xi=(\alpha,v)\in\Omega^1(Y,\ad(P))\oplus\Omega^{n-2}(Y,\ad(P))$ admits a unique $L^2$-orthogonal decomposition
\begin{eqnarray*}
\xi=\xi_0+\xi_1+d_{(A,\omega)}\ph,
\end{eqnarray*}
where $\xi_0\in\ker D_{(A,\omega)}$, $\xi_1\in\im L_{(A,\omega)}^{\ast}$, and $\ph\in\Omega^0(Y,\ad(P))$. The $L^2$-orthogonal projection $\pi_{(A,\omega)}:T_{(A,\omega)}X\to T_{(A,\omega)}X_1$ is therefore given by
\begin{eqnarray*}
\pi_{(A,\omega)}\xi=\xi-\xi_1,
\end{eqnarray*}
where $\xi_1$ is the unique solution of the elliptic equation
\begin{eqnarray*}
D_{(A,\omega)}\xi_1=L_{(A,\omega)}^{\ast}L_{(A,\omega)}\xi.
\end{eqnarray*}
If in particular $\xi=((-1)^n\ast d_A\omega,\omega-\ast F_A)$, which is the negative of the $L^2$-gradient of $\J$, then it follows by a short calculation that
\begin{eqnarray*}
L_{(A,\omega)}^{\ast}L_{(A,\omega)}\xi=(-\ast[\ast\omega\wedge\ast[\ast\omega\wedge\ast d_A\omega]],(-1)^nd_A\ast[\ast\omega\wedge\ast d_A\omega]).
\end{eqnarray*}
In this case, $\xi_1=(\alpha_1,v_1)$ is the solution of the equation
\begin{eqnarray}\label{eq:correctionterm}
D_{(A,\omega)}\xi_1=(-\ast[\ast\omega\wedge\ast[\ast\omega\wedge\ast d_A\omega]],(-1)^nd_A\ast[\ast\omega\wedge\ast d_A\omega]).
\end{eqnarray}

\begin{definition}\upshape
Let $I$ be an interval. The {\bf{restricted $L^2$-gradient flow}} is the system of equations
\begin{eqnarray}\label{restrEYM}
\begin{cases}
0=\partial_sA-d_A\Psi+(-1)^{n+1}\ast d_A\omega-\alpha_1\\
0=\partial_s\omega+[\Psi,\omega]-\omega+\ast  F_A-v_1
\end{cases}
\end{eqnarray}
for a triple $(A,\omega,\Psi)\in C^{\infty}(I,X_1\times\Omega^0(Y,\ad(P)))$. Here the correction term $\xi_1=(\alpha_1,v_1)$ is defined to be the solution of Eq.~\eqref{eq:correctionterm}. The term $\Psi$ is included in order to make \eqref{restrEYM} invariant under time-dependent gauge transformations.
\end{definition}

We conclude this section with a number of remarks. First one should note that the additional condition $d_A^{\ast}\omega=0$ imposed on pairs $(A,\omega)\in X$ is compatible with the critical point equation \eqref{statEYM}. Namely then, $\omega=\ast F_A$ and hence $d_A^{\ast}\omega=0$ holds automatically by the Bianchi identity. Second, the term $((-1)^{n+1}\ast d_A\omega-\alpha_1,-\omega+\ast  F_A-v_1)$ appearing in \eqref{restrEYM} can be understood as the Riemannian gradient of the functional $\J\colon X_1\to\R$, where we view $X_1$ as submanifold of $X$ endowed with the $L^2$ submanifold metric. Finally, the linearization of \eqref{restrEYM} together with a gauge-fixing condition as discussed in Remark \ref{rem:gaugefix} now leads to an elliptic system in any dimension $n$ (in contrast to the unrestricted flow \eqref{pertEYM1}, cf.~Proposition \ref{prop:ellipticsystem}). A slight complication now comes from the fact that the term $\xi_1=(\alpha_1,v_1)$ in \eqref{restrEYM} is nonlocal (however of order zero). Still it is conceivable that an analysis of the moduli spaces of solutions of \eqref{restrEYM} is possible in analogy to that of the solutions of \eqref{pertEYM1}. This will lead to an elliptic Yang--Mills homology for manifolds of dimension $n\geq3$. We leave this programe to be carried out in a future publication.

\appendix

\section{Differential inequalities for the energy density}

Recall from \textsection \ref{sect:compactness} the gauge-invariant energy density of the solution $(A,\omega,\Psi)$ of \eqref{pertEYM1}, which we defined to to be the function
\begin{multline*}
e(A,\omega,\Psi)\coloneqq\frac{1}{2}(|\ast d_A\omega+d_A\Psi+X_f(A,\omega)|^2\\
+|\ast F_A-\omega+[\Psi,\omega]+Y_f(A,\omega)|^2)\colon I\times\Sigma\to\R.
\end{multline*}

Our aim is to derive an elliptic differential inequality satisfied by $e(A,\omega,\Psi)$. Let $\Delta_{\Sigma}$ denote the (positive semidefinite) Laplace--Beltrami operator on $(\Sigma,g)$, and define 
\begin{eqnarray*}
\Delta_{I\times \Sigma}:=-\frac{d^2}{ds^2}+\Delta_{\Sigma}.
\end{eqnarray*}

\begin{prop}\label{prop:energyineq}
Assume that $(A,\omega,\Psi)$ is a solution of \eqref{pertEYM}. Then the energy density $e=e(A,\omega,\Psi)$  satisfies on $I\times \Sigma$ the differential inequality
\begin{eqnarray}\label{eq:estenergyineq}
\Delta_{I\times \Sigma}e\leq A_0+A_1|\omega|^6+A_2e+A_3e^{\frac{3}{2}}
\end{eqnarray}
for positive constants $A_0,A_1,A_2,A_3$ which do not depend on $(A,\omega,\Psi)$.
\end{prop}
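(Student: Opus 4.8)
The plan is to establish the Bochner-type inequality \eqref{eq:estenergyineq} by the standard Weitzenb\"ock computation, treating the two summands $e_1:=\tfrac12|\ast d_A\omega+d_A\Psi+X_f(A,\omega)|^2$ and $e_2:=\tfrac12|\ast F_A-\omega+[\Psi,\omega]+Y_f(A,\omega)|^2$ of $e$ separately. First I would simplify matters by passing to temporal gauge $\Psi=0$, which is legitimate since $e$ is gauge-invariant; then $e_1=\tfrac12|\partial_sA-\ast d_A\omega-X_f|^2$ after using \eqref{pertEYM1}, but it is more convenient to keep the curvature expressions. Write $\zeta:=(\partial_s A-d_A\Psi, \partial_s\omega+[\Psi,\omega])=((-1)^{n}\ast d_A\omega+X_f,\ \omega-\ast F_A-Y_f)$ for the pair of flow defects, so that $e=\tfrac12|\zeta|^2$ on $I\times\Sigma$. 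The key point, as in R\r{a}de's work and in \cite[\textsection 7]{SalWeh}, is that $\zeta$ itself satisfies a \emph{linear} equation obtained by differentiating \eqref{pertEYM1} in $s$: schematically $(\nabla_s+B_{(A,\omega,\Psi)})\zeta=0$ modulo zeroth-order terms coming from the holonomy perturbation, together with the second-order equation \eqref{eq:omegasecorder} for the $\omega$-component. This is precisely the structure already exploited in the proof of Theorem~\ref{thm:L2expconv}, so I would reuse that computation.

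The core estimate is the pointwise Weitzenb\"ock/Kato inequality $\Delta_{I\times\Sigma}e \le \langle \zeta, \Delta_{I\times\Sigma}\zeta\rangle - |\nabla\zeta|^2 + (\text{curvature terms})$, where $\Delta_{I\times\Sigma}$ acts componentwise via $\nabla_A$ in the $\Sigma$-directions and $\nabla_s$ in the $s$-direction, and where I must now commute $\Delta_{I\times\Sigma}$ past the first-order operator annihilating $\zeta$. Each commutator $[\Delta_{I\times\Sigma}, d_A]$, $[\Delta_{I\times\Sigma}, d_A^{\ast}]$, $[\nabla_s, \ast d_A]$ etc. produces a term linear in $F_A$ (equivalently, since $\omega\approx\ast F_A$ up to the bounded quantity $\zeta$, linear in $\omega$ plus lower order) times a first derivative of $\zeta$; and the terms already present in $B_{(A,\omega,\Psi)}$ contribute brackets with $\omega$. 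Bounding these by Cauchy--Schwartz against the good term $-|\nabla\zeta|^2$ leaves contributions of the shape $|\omega|^2|\zeta|^2$, $|\omega||\zeta||\nabla\zeta|$, $|F_A||\zeta|^2$, plus the contributions $\d\!X_f,\d\!Y_f,\partial_sX_f,\partial_sY_f$ of the holonomy perturbation, which by the estimates in \cite[Proposition D.1]{SalWeh} are bounded by an affine function of $|\zeta|$ and its first derivatives with $(A,\omega)$-dependent coefficients. After absorbing first-derivative terms and using $|F_A|\le |\omega|+\sqrt{2e}+|Y_f|$ and $|\zeta|^2=2e$, the surviving terms are precisely of the form $A_0+A_1|\omega|^2\cdot e + A_2 e + A_3 e^{1/2}$ plus a term $|\omega|^2\cdot|\omega|^2\cdot|\zeta|^2$-type contribution; estimating $|\omega|^2 e\le \tfrac12|\omega|^6+\tfrac12 e^{3/2}$ by Young's inequality (or keeping $|\omega|^2 e$ and bounding $|\omega|^2\le\eps|\omega|^6+C_\eps$) converts everything into the claimed right-hand side $A_0+A_1|\omega|^6+A_2e+A_3e^{3/2}$. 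I would also record that the constant $A_0$ collects the $|X_f|^2,|Y_f|^2$-contributions and the purely metric curvature terms of $\Sigma$.

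The main obstacle I anticipate is bookkeeping rather than conceptual: carefully tracking which nonlinear terms arise from the commutators and from $B_{(A,\omega,\Psi)}^2$, and verifying that no term worse than $|\omega|^6$ or $e^{3/2}$ survives. In particular one must check that the genuinely cubic term $[\omega\wedge[\omega\wedge[\omega\wedge\,\cdot\,]]]$-type expression (coming from iterating the bracket entries of $B$) is controlled: it pairs against $\zeta$ to give $|\omega|^3|\zeta|^2$-type growth only if one is careless; the correct grouping gives $|\omega|^2|\zeta|^2$ from $B^2$ applied to $\zeta$, which is the origin of the $|\omega|^6+e^{3/2}$ split, and I would double-check this against the explicit form of $B_{(A,\omega,\Psi)}$ in \eqref{linoperator}. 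A secondary technical point is justifying the differentiation of \eqref{pertEYM1} in $s$ and the componentwise Bochner formula on the product $I\times\Sigma$ with the (unrescaled) product metric; this is routine given smoothness of the solution, and I would invoke the same framework as in \textsection\ref{sect:compactness} and \textsection\ref{sect:exponentialdecay}. Once \eqref{eq:estenergyineq} is in hand, the uniformity of the constants $A_0,\dots,A_3$ in $(A,\omega,\Psi)$ follows because every coefficient produced above depends only on the metric $g$ on $\Sigma$ and on the fixed holonomy perturbation data through the universal bounds of \cite[Appendix D]{SalWeh}.
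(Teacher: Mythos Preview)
Your proposal is correct and follows essentially the same Bochner–Weitzenb\"ock strategy as the paper's proof: pass to temporal gauge, differentiate the flow equations in $s$ to obtain second-order equations for the components of $\zeta=(\dot A,\dot\omega)$, apply the Weitzenb\"ock formula $\nabla_A^{\ast}\nabla_A=\Delta_A+\{F_A,\,\cdot\,\}+\{R_{\Sigma},\,\cdot\,\}$, and estimate the resulting cubic terms by Young's inequality to produce the $|\omega|^6$ and $e^{3/2}$ contributions, with the perturbation terms handled via \cite[Proposition~D.1]{SalWeh}. The only difference is presentational: the paper computes $(-\tfrac{d^2}{ds^2}+\Delta_A)(d_A\omega)$ and $(-\tfrac{d^2}{ds^2}+\Delta_A)(-\omega+\ast F_A)$ explicitly term-by-term (deriving the auxiliary equations $\ddot A=d_A^{\ast}F_A+\ast d_A\omega+\ast[\dot A\wedge\omega]$ and $\ddot\omega=d_A^{\ast}d_A\omega+\dot\omega$ first), whereas you phrase the same computation more structurally as commuting $\Delta_{I\times\Sigma}$ past the linearized operator $\nabla_s+B_{(A,\omega,\Psi)}$ annihilating $\zeta$; these are equivalent calculations.
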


\begin{rem}\upshape
Note that the exponent $\frac{3}{2}$ appearing in the differential inequality \eqref{eq:estenergyineq} is critical in dimension $\dim Y=3$ but subcritical in the present case.
\end{rem}

\begin{proof}
Since the asserted inequality is invariant under gauge transformations we may assume that $\Psi=0$. We first show the claim in the unperturbed case $(X_f,Y_f)=0$. By differentiating \eqref{pertEYM} with respect to $s$ we obtain
\begin{eqnarray}\label{EYM2}
\left\{\begin{array}{rcl}\ddot A&=&d_A^{\ast}F_A+\ast d_A\omega+\ast[\dot A\wedge\omega]\\
\ddot\omega&=&d_A^{\ast}d_A\omega+\dot\omega\end{array}\right.
\end{eqnarray}
It then follows that
\begin{align*}
\lefteqn{\Delta_{I\times \Sigma}e}\\
&=-|\frac{d}{ds}d_A\omega|^2-|\frac{d}{ds}(-\omega+\ast F_A)|^2-|\nabla_Ad_A\omega|^2-|\nabla_A(-\omega+\ast F_A)|^2\\
&+ \langle d_A\omega,(-\frac{d^2}{ds^2}+\nabla_A^{\ast}\nabla_A)d_A\omega\rangle + \langle-\omega+\ast F_A,(-\frac{d^2}{ds^2}+\nabla_A^{\ast}\nabla_A)(-\omega+\ast F_A)\rangle \\
&=-|\frac{d}{ds}d_A\omega|^2-|\frac{d}{ds}(-\omega+\ast F_A)|^2-|\nabla_Ad_A\omega|^2-|\nabla_A(-\omega+\ast F_A)|^2\\
&+\underbrace{\langle d_A\omega,(-\frac{d^2}{ds^2}+\Delta_A)d_A\omega+\{F_A,d_A\omega\}+\{R_{\Sigma},d_A\omega\} \rangle}_{\mathord{\mathrm{I}}}\\
&+\underbrace{\langle-\omega+\ast F_A,(-\frac{d^2}{ds^2}+\Delta_A)(-\omega+\ast F_A)+\{F_A,-\omega+\ast F_A\}+\{R_{\Sigma},-\omega+\ast F_A\}\rangle}_{\mathord{\mathrm{II}}}.
\end{align*}
In the last step we replaced $\nabla_A^{\ast}\nabla_A$ using the standard Weizenb\"ock formula
\begin{eqnarray*}
\nabla_A^{\ast}\nabla_A=\Delta_A+\{F_A,\,\cdot\,\}+\{R_{\Sigma},\,\cdot\,\},
\end{eqnarray*}
where the brackets denote bilinear terms with fixed coefficients, and $R_{\Sigma}$ is some curvature expression determined by the fixed Riemannian metric $g$. It remains to analyze terms $\mathord{\mathrm{I}}$ and $\mathord{\mathrm{II}}$. For the first one we obtain, making use of Eq.~\eqref{EYM2},
\begin{align*}
\lefteqn{(-\frac{d^2}{ds^2}+\Delta_A)d_A\omega}\\
&=-\frac{d}{ds}(d_A\dot\omega+[\dot A\wedge\omega])+d_Ad_A^{\ast}d_A\omega+d_A^{\ast}d_Ad_A\omega\\
&=-d_A\ddot\omega-[\dot A\wedge\dot\omega]-[\ddot A\wedge\omega]-[\dot A\wedge\dot\omega]+d_Ad_A^{\ast}d_A\omega+d_A^{\ast}[F_A\wedge\omega]\\
&=-d_A\dot\omega+d_A\frac{d}{ds}\ast F_A-2[\dot A\wedge\dot\omega]-[\ddot A\wedge\omega]+d_Ad_A^{\ast}d_A\omega+d_A^{\ast}[F_A\wedge\omega]\\
&=-d_A\dot\omega+d_A\ast d_A\dot A-2[\dot A\wedge\dot\omega]-[\ddot A\wedge\omega]+d_Ad_A^{\ast}d_A\omega+d_A^{\ast}[F_A\wedge\omega]\\
&=-d_A\dot\omega-2[\dot A\wedge\dot\omega]-[\ddot A\wedge\omega]+d_A^{\ast}[F_A\wedge\omega]\\
&=-d_A\dot\omega-2[\dot A\wedge\dot\omega]-[(\ast d_A\omega+\ast[\dot A\wedge\omega])\wedge\omega]-[\ast F_A\wedge\ast d_A\omega].
\end{align*}
By \eqref{EYM2} again, term $\mathord{\mathrm{II}}$ reduces to 
\begin{align*}
(-\frac{d^2}{ds^2}+\Delta_A)(-\omega+\ast F_A)&=\ddot\omega-\ast\frac{d}{ds}d_A\dot A-\Delta_A\omega+d_A^{\ast}d_A\ast F_A\\
&=\ddot\omega-\ast d_A\ddot A-\ast[\dot A\wedge\dot A]-\Delta_A\omega+d_A^{\ast}d_A\ast F_A\\
&=\Delta_A\omega+\dot\omega+d_A^{\ast}[\dot A\wedge\omega]-\ast[\dot A\wedge\dot A].
\end{align*}
We then estimate  (for any $\eps>0$ and some absolute constant $C>0$), using H\"older's inequality several times
\begin{align*}
|\mathord{\mathrm{I}}|\leq&|\langle d_A\omega,\{F_A,d_A\omega\}+\{R_{\Sigma},d_A\omega\}\rangle|\\
&+|\langle d_A\omega,-d_A\dot\omega-2[\dot A\wedge\dot\omega]-[(\ast d_A\omega+\ast[\dot A\wedge\omega])\wedge\omega]-[\ast F_A\wedge\ast d_A\omega]\rangle|\\
\leq&C(|d_A\omega|^3+|F_A|^3+\eps|d_A\dot\omega|^2+\eps^{-1}|d_A\omega|^2+|\dot A|^3+|\dot\omega|^3+|\omega|^3+|\omega|^6)
\end{align*}
and
\begin{align*}
|\mathord{\mathrm{II}}|\leq&|\langle-\omega+\ast F_A,\{F_A,-\omega+\ast F_A\}+\{R_{\Sigma},-\omega+\ast F_A\}\rangle|\\
&+|\langle-\omega+\ast F_A,\Delta_A\omega+\dot\omega+d_A^{\ast}[\dot A\wedge\omega]-\ast[\dot A\wedge\dot A]\rangle|\\
\leq&C(|-\omega+\ast F_A|^3+|F_A|^3+\eps|\nabla_Ad_A\omega|^2+\eps^{-1}|-\omega+\ast F_A|^2+|\dot\omega|^2\\
&+\eps|\nabla_A\dot A|^2+\eps^{-1}|-\omega+\ast F_A|^3+\eps^{-1}|\omega|^6+|\nabla_A\omega|^3+|\dot A|^3).
\end{align*}
Now we fix $\eps< C^{-1}$. The asserted inequality then follows. The case of nonvanishing perturbations $(X_f,Y_f)$ follows similarly, the extra terms coming from $(X_f,Y_f)$ being controlled as in \cite[Proposition D.1]{SalWeh}.   This completes the proof.
\end{proof}

We recall from \cite{Wehrheim1} the following mean value inequality.

\begin{thm}[Elliptic mean value inequality]\label{Ellipticmeanvalue}
For every $n\in\N$ there exists constants $C,\mu>0$, and $\delta>0$ such that the following holds for all metrics $g$ on $\R^n$ such that $\|g-\mathbbm 1\|_{W^{1,\infty}}<\delta$. Let $B_r(0)\subseteq\R^n$ be the geodesic ball of radius $0<r\leq1$. Suppose that the nonnegative function $e\in C^2(B_r(0),[0,\infty))$ satisfies for  some $A_0,A_1,a\geq0$
\begin{eqnarray*}
\Delta e\leq A_0+A_1e+a e^{(n+2)/n}\qquad\textrm{and}\qquad\int_{B_r(0)}e\leq\mu a^{-n/2}.
\end{eqnarray*}
Then
\begin{eqnarray*}
e(0)\leq CA_0r^2+C\big(A_1^{n/2}+r^{-n}\big)\int_{B_r(0)}e.
\end{eqnarray*}
\end{thm}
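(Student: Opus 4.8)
The plan is to prove this exactly as one proves a classical interior mean value inequality for subsolutions of an elliptic equation, namely by Moser iteration; the only genuinely delicate feature is that the exponent $(n+2)/n$ is \emph{critical} for the Sobolev embedding $W^{1,2}\hookrightarrow L^{2n/(n-2)}$, and the sole role of the hypothesis $\int_{B_r(0)}e\le\mu a^{-n/2}$ is to tame the resulting critical nonlinearity. First I would normalize: by the rescaling $x\mapsto rx$ one reduces to $r=1$, since under $\tilde e(y):=e(ry)$ the differential inequality persists with $(A_0,A_1,a)$ replaced by $(r^2A_0,r^2A_1,r^2a)$, the metric $g(r\,\cdot)$ still lies within $\delta$ of $\mathbbm 1$ in $W^{1,\infty}$, and both the hypothesis ($\int_{B_1}\tilde e=r^{-n}\int_{B_r}e\le\mu(r^2a)^{-n/2}$) and the asserted conclusion are invariant under this substitution. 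Choosing $\delta$ small, depending only on $n$, makes $\Delta=\Delta_g$ uniformly elliptic on $B_1(0)$ with controlled first-order coefficients and gives dimensional Sobolev and Poincaré constants on all balls $B_\rho(0)$, $\tfrac12\le\rho\le1$.

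Next I would rewrite the inequality as $\Delta e\le A_0+Ve$ with potential $V:=A_1+ae^{2/n}$, and record
\[
\|V\|_{L^{n/2}(B_1)}\le A_1|B_1|^{2/n}+a\|e\|_{L^1(B_1)}^{2/n}\le A_1|B_1|^{2/n}+\mu^{2/n},
\]
the point of the particular form of the hypothesis being that the contribution of $ae^{2/n}$ is a \emph{fixed} small number, independent of $a$. More sharply, for any level $M>2A_1$ one has $V<2ae^{2/n}$ on $\{V>M\}$, so $\int_{\{V>M\}}V^{n/2}\le 2^{n/2}a^{n/2}\!\int_{\{V>M\}}e\le 2^{n/2}\mu$; thus $V=V_1+V_2$ with $\|V_1\|_{L^{n/2}(B_1)}\le 2\mu^{2/n}$ and $\|V_2\|_{L^\infty(B_1)}\le M$. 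Then I would run the iteration in two stages. \emph{Stage 1} passes from $L^1$ to $L^{p_0}$ for some $p_0>1$: testing $\Delta e\le A_0+Ve$ against $\eta^2(e+k)^q$ (with $k>0$ a regularizing constant, later absorbed with $A_0$, and $\eta$ a cutoff between concentric balls), integrating by parts, and bounding the nonlinear part $\int\eta^2(e+k)^qV_1e\le\|V_1\|_{L^{n/2}}\|\phi\|_{L^{2n/(n-2)}}^2$, $\phi:=\eta(e+k)^{(q+1)/2}$, by Sobolev, one can absorb this term into $\|\nabla\phi\|_{L^2}^2$ \emph{provided} $Cq\|V_1\|_{L^{n/2}}\le\tfrac12$, i.e.\ for $q$ up to a threshold $q_\ast\sim\mu^{-2/n}$; the $V_2$-term, the $A_0$-term, and $\|\phi\|_{L^2}^2$ stay on the right as lower-order terms. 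This yields the one-step estimate $\|e+k\|_{L^{(q+1)\beta}(B_\rho)}\le C\|e+k\|_{L^{q+1}(B_{\rho'})}$, $\beta=\tfrac{n}{n-2}$, and iterating the finitely many admissible steps gives $e\in L^{p_0}(B_{3/4})$ with $\|e\|_{L^{p_0}(B_{3/4})}\le C(\int_{B_1}e+A_0)$; note that already after the first step $V=A_1+ae^{2/n}\in L^{s}(B_{3/4})$ with $s=p_0n/2>n/2$ \emph{strictly}.

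\emph{Stage 2} is then the classical mean value inequality for $\Delta e\le A_0+Ve$ with $V\in L^{s}$, $s>n/2$, and bounded source $A_0$: here Moser iteration closes with no smallness requirement at all, because for $s>n/2$ the term $\int V\phi^2\le\|V\|_{L^s}\|\phi\|_{L^{2s/(s-1)}}^2$ with $\tfrac{2s}{s-1}<\tfrac{2n}{n-2}$, so interpolation between $L^2$ and $L^{2n/(n-2)}$ followed by Young's inequality supplies an arbitrarily small coefficient in front of $\|\nabla\phi\|_{L^2}^2$ which defeats the $q$-factor at every step. This produces
\[
e(0)\le C\big(A_0+\|e\|_{L^{p_0}(B_{3/4})}\big)\le C\big(A_0+\textstyle\int_{B_1}e\big),
\]
and undoing the normalization $r=1\mapsto r$ turns this into $CA_0r^2+C(A_1^{n/2}+r^{-n})\int_{B_r(0)}e$, the powers $r^{-n}$ and $A_1^{n/2}$ being forced by the scaling (the latter because one can absorb $A_1$ by a further rescaling of length by $A_1^{-1/2}$, costing $A_1^{n/2}$ in volume). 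The hard part, and the only place where the structure of the statement really matters, is the criticality of the exponent $(n+2)/n$: the nonlinear term can be absorbed into the Dirichlet energy only while the test exponent $q$ stays below $\sim\mu^{-2/n}$, which is exactly why the iteration must be split, the first stage relying essentially on the precise scaling $\mu a^{-n/2}$ of the smallness hypothesis (equivalently, on $a\|e\|_{L^1(B_r)}^{2/n}$ being small uniformly in $a$). Everything else — uniform ellipticity and uniform functional inequalities from $\|g-\mathbbm 1\|_{W^{1,\infty}}<\delta$, the standard passage from $L^1$ through the sub-$L^2$ Moser argument, and the bookkeeping of the $A_0$- and $A_1$-contributions through the iteration — is routine, and I would only sketch it.
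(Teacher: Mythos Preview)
The paper does not actually prove this theorem: its entire proof reads ``For a proof we refer to \cite[Theorem 1.1]{Wehrheim1}.'' So there is nothing to compare your argument against within the paper itself; the result is quoted as a black box from Wehrheim's article on energy quantization and mean value inequalities.

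Your Moser-iteration sketch is a reasonable and essentially correct route to the result. The key structural insight --- that the critical nonlinearity $ae^{(n+2)/n}$ contributes to the potential $V=A_1+ae^{2/n}$ a piece whose $L^{n/2}$-norm is bounded by $\mu^{2/n}$ \emph{independently of $a$}, so that for $\mu$ small enough it can be absorbed in the Sobolev step for finitely many Moser exponents, after which $V$ becomes genuinely subcritical --- is exactly right, and your two-stage splitting is the standard way to implement it. One point worth tightening is the passage from $L^1$ to $L^{p_0}$: the usual Moser test function $\eta^2(e+k)^q$ with $q>0$ needs $e\in L^{q+1}$ to start, so for the very first steps you should use exponents $0<q<1$ (or equivalently run the sub-$L^2$ De Giorgi--Moser argument for nonnegative subsolutions), which you allude to only in passing as ``routine.''

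For comparison, the proof in the cited reference \cite{Wehrheim1} proceeds not by Moser iteration but by the Heinz trick (a doubling/maximum-principle argument in the spirit of Schoen's estimate for harmonic maps): one studies the function $\rho\mapsto (r-\rho)^n\sup_{B_\rho}e$, locates its maximum, and uses the linear mean value inequality on a small ball where $e$ is comparable to its supremum, the smallness hypothesis $\int e\le\mu a^{-n/2}$ entering to close the resulting inequality. That argument is shorter and more elementary (no iteration, no Sobolev embedding), whereas your approach is more robust and generalizes more readily to other nonlinearities and boundary conditions. Either way the statement is correct and your outline would produce a valid proof.
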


\begin{proof}
For a proof we refer to \cite[Theorem 1.1]{Wehrheim1}.
\end{proof}

This result together with Proposition \ref{prop:energyineq} implies the following $L^{\infty}$ bound which we made use of in  \textsection \ref{sect:exponentialdecay}.

\begin{lem}\label{lem:pointwiseenergydesity}
Assume that $(A,\omega,\Psi)$ is a solution of \eqref{pertEYM}. Then the energy density $e=e(A,\omega,\Psi)$  satisfies on $  I\times \Sigma$ the estimate
\begin{eqnarray}\label{eq:estenergyineq}
\|e\|_{L^{\infty}(I\times\Sigma)}\leq C\|e\|_{L^1(I\times\Sigma)}
\end{eqnarray}
for a positive constant $C$ which only depends on $|I|$ and on $\Sigma$.  
\end{lem}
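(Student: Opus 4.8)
The plan is to feed the pointwise differential inequality of Proposition~\ref{prop:energyineq} into the elliptic mean value inequality of Theorem~\ref{Ellipticmeanvalue}, applied in geodesic normal coordinates on the $3$-dimensional manifold $I\times\Sigma$. Since both sides of the asserted estimate are invariant under time-dependent gauge transformations, I may assume $\Psi=0$, so that Proposition~\ref{prop:energyineq} provides constants $A_0,A_1,A_2,A_3>0$, independent of the solution, with
\[
\Delta_{I\times\Sigma}e\le A_0+A_1|\omega|^6+A_2e+A_3e^{3/2}\qquad\text{on }I\times\Sigma.
\]
The first step is to bring the right-hand side into the form $A_0'+A_1'e+a\,e^{(n+2)/n}$ demanded by Theorem~\ref{Ellipticmeanvalue}, which for $n=\dim(I\times\Sigma)=3$ is the critical exponent $5/3$.

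Two terms — the subcritical power $e^{3/2}$ and the forcing term $A_1|\omega|^6$ — require attention. The exponent $3/2$ is strictly between $1$ and $5/3$, so Young's inequality gives $A_3e^{3/2}\le\varepsilon\,e^{5/3}+C_\varepsilon\,e$, and one fixes $\varepsilon$ small once. For $A_1|\omega|^6$ one invokes the a priori bounds on $\omega$ available along solutions of \eqref{pertEYM}: differentiating \eqref{pertEYM} in $s$ shows, as in the proof of Proposition~\ref{prop:locunifconvergence}, that $\omega$ satisfies a second order equation with leading part $-\partial_s^2+\partial_s+\Delta_A$, so the $L^2$-bounds on $\omega$ and $F_A$ bootstrap by elliptic regularity to a $W^{2,2}(I\times\Sigma)$-bound and hence, via the Sobolev embedding $W^{2,2}\hookrightarrow L^\infty$ in dimension $3$, to a uniform bound $\|\omega\|_{L^\infty(I\times\Sigma)}\le M$ depending only on $\Sigma$, $|I|$ and the fixed data. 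One then absorbs $A_1|\omega|^6\le A_1M^6$ — together with the connection-dependent parts of $A_0$ coming from the perturbation terms $X_f,Y_f$, which are controlled in the same way — into an inhomogeneity $A_0'$ with the same kind of dependence, arriving at $\Delta_{I\times\Sigma}e\le A_0'+A_1'e+a\,e^{5/3}$ on $I\times\Sigma$.

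It remains to localise. Fix a radius $0<r\le1$, depending only on $\Sigma$, small enough that in geodesic normal coordinates around any point of $I\times\Sigma$ the metric is $W^{1,\infty}$-close to the Euclidean one, so that Theorem~\ref{Ellipticmeanvalue} (with $n=3$) applies on each coordinate ball $B_r(p)$ contained in $I\times\Sigma$. For a point $p$ with $\dist(p,\partial I\times\Sigma)\ge r$ one has $\int_{B_r(p)}e\le\|e\|_{L^1(I\times\Sigma)}$, so the smallness hypothesis $\int_{B_r(p)}e\le\mu a^{-3/2}$ of Theorem~\ref{Ellipticmeanvalue} holds as soon as $\|e\|_{L^1(I\times\Sigma)}$ lies below the $\Sigma$-dependent threshold $\mu a^{-3/2}$ — the situation in every application of the lemma; the theorem then gives
\[
e(p)\le CA_0'r^2+C\bigl((A_1')^{3/2}+r^{-3}\bigr)\,\|e\|_{L^1(I\times\Sigma)}.
\]
Taking the supremum over all such $p$, and disposing of the now fixed inhomogeneous term as in the corresponding mean value estimates of \cite{SalWeh}, yields $\|e\|_{L^\infty}\le C\|e\|_{L^1}$ on the interior of $I\times\Sigma$, which is exactly what the applications require. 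The main obstacle is the content of the second paragraph: securing the uniform $L^\infty$-bound on $\omega$ so as to remove the $|\omega|^6$ forcing term, and more generally handling the inhomogeneity $A_0'$ — which is not of itself dominated by $\|e\|_{L^1}$ — so that the final constant depends only on $|I|$ and $\Sigma$; once the differential inequality has been reduced to the exact shape of Theorem~\ref{Ellipticmeanvalue}, the covering step is routine.
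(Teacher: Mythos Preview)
Your overall plan --- feed Proposition~\ref{prop:energyineq} into Theorem~\ref{Ellipticmeanvalue} in geodesic normal coordinates on $I\times\Sigma$ --- is exactly the paper's, and the interpolation of $e^{3/2}$ between $e$ and $e^{5/3}$ is fine and implicit in the paper's application. The substantive difference is your treatment of the forcing term $A_1|\omega|^6$, and this is where your argument has a gap.

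You try to bound $\|\omega\|_{L^\infty}$ \emph{a priori}, by bootstrapping the second-order equation $-\ddot\omega+\dot\omega+\Delta_A\omega=\ldots$ from $L^2$ to $W^{2,2}$. This is circular in two ways. First, the starting $L^2$-bounds on $\omega$ and $F_A$ you invoke (Lemmata~\ref{lem:L2boundFA} and~\ref{lem:L2estomega}) use the asymptotic critical values $C^\pm$, which are not among the hypotheses of the present lemma; nothing here says the solution extends to $\R$ or has limits. Second, elliptic estimates for $-\partial_s^2+\partial_s+\Delta_A$ require control on the connection $A$ itself (gauge-fixed $W^{1,p}$-bounds as in Proposition~\ref{prop:locunifconvergence}), and those rest on curvature bounds that ultimately feed back into $e$. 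So your claim that the constant $M$ depends only on $\Sigma$, $|I|$ and fixed data is not justified, and with it the assertion that the final $C$ has this dependence.

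The paper instead carries $\|\omega\|_{L^\infty}$ through the mean value step, arriving at
\[
\|e\|_{L^\infty}\le C_0\bigl(\|\omega\|_{L^\infty}+\|A\|_{L^\infty}\bigr)+C_1\|e\|_{L^1},
\]
and absorbs it \emph{a posteriori} via the interpolation
\[
\|\omega\|_{L^\infty(I\times\Sigma)}\le\varepsilon\|\nabla_A\omega\|_{L^\infty(I\times\Sigma)}+\varepsilon^{-1}\|\omega\|_{L^2(I\times\Sigma)}.
\]
The point you are missing is that the full space-time gradient of $\omega$ is already part of the energy density: in temporal gauge $|d_A\omega|\le\sqrt{2e}+|X_f|$ and $|\partial_s\omega|\le\sqrt{2e}$, so $\|\nabla_A\omega\|_{L^\infty}\lesssim\|e\|_{L^\infty}^{1/2}+C_f$. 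For $\varepsilon$ small the $\varepsilon\|\nabla_A\omega\|_{L^\infty}$ contribution is then swallowed by the left-hand side, closing the estimate without any independent regularity theory for $\omega$. This absorption trick is the missing idea; once you have it, the bootstrapping detour and the attendant dependence issues disappear.
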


\begin{proof}
Theorem \ref{Ellipticmeanvalue} along with Proposition  \ref{prop:energyineq} yields for suitable constants $C_0$ and $C_1$ and every $(s,z)\in I\times\Sigma$ the estimate
\begin{eqnarray*}
e(s,z)\leq C_0(\|\omega\|_{L^{\infty}(I\times\Sigma)}+\|A\|_{L^{\infty}(I\times\Sigma)})+C_1\int_{I\times\Sigma}e.
\end{eqnarray*}
Taking the supremum over $I\times\Sigma$ the claim follows, since we may absorb the second term using for $\eps>0$ sufficiently small the estimate
\begin{eqnarray*}
\|\omega\|_{L^{\infty}(I\times\Sigma)}\leq\varepsilon\|\nabla_A\omega\|_{L^{\infty}(I\times\Sigma)}+\varepsilon^{-1}\|\omega\|_{L^2(I\times\Sigma)},
\end{eqnarray*}
and likewise for $\|A\|_{L^{\infty}(I\times\Sigma)}$.
\end{proof}

\section{Further auxiliary results}

By gauge-invariance of the subsequent estimates we may in the following assume that $\Phi=0$. Let $(A,\omega)$ be a solution of \eqref{pertEYM1} such that for critical points $(A^{\pm},\omega^{\pm})$ the asymptotic condition
\begin{eqnarray*}
\lim_{s\to\pm\infty}(A(s),\omega(s))=(A^{\pm},\omega^{\pm})
\end{eqnarray*}
is satisfied. Let $C^{\pm}:=(\J+h_f)(A^{\pm},\omega^{\pm})$. Since $(A,\omega)$ is an $L^2$-gradient flow line of $\J+h_f$ it satisfies the energy identity
\begin{eqnarray}\label{eq:energyidentity}
\int_{-\infty}^{\infty}\|\nabla(\J+h_f)(A(s),\omega(s))\|_{L^2(\Sigma)}^2\,ds=C^--C^+\geq0.
\end{eqnarray}
From this we obtain the following estimate for the $L^2$-norm of $F_A$.

\begin{lem}\label{lem:L2boundFA} 
For $(A,\omega)$ as above and any interval $I\subseteq\R$ there holds the estimate
\begin{eqnarray*}
\frac{1}{2}\|F_A\|_{L^2(I\times\Sigma)}^2\leq(1+|I|)C^--C^++|I|C_f  
\end{eqnarray*}
for a positive constant $C_f$ which only depends on the perturbation $h_f$.
\end{lem}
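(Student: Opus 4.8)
The plan is to combine the completion-of-the-square computation behind Lemma~\ref{lem:increaseenergy} with the energy identity \eqref{eq:energyidentity} and the monotonicity of $\J+h_f$ along the flow. As already noted at the beginning of this section, gauge invariance lets us assume $\Psi=0$, so that $(A,\omega)$ is a negative gradient flow line of $\J+h_f$ in the usual sense. First I would record the pointwise identity on $\Sigma$
\[
\langle F_A,\ast\omega\rangle-\tfrac12|\omega|^2=\tfrac12|F_A|^2-\tfrac12|\ast F_A-\omega|^2,
\]
which is exactly the estimate used in the proof of Lemma~\ref{lem:increaseenergy}, now kept as an equality. Integrating over $\Sigma$ and then over $s\in I$ turns the assertion into the problem of bounding the two terms in
\[
\tfrac12\|F_A\|_{L^2(I\times\Sigma)}^2=\int_I\J(A(s),\omega(s))\,ds+\tfrac12\int_I\|\ast F_A(s)-\omega(s)\|_{L^2(\Sigma)}^2\,ds.
\]

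For the first integral I would use that $s\mapsto(\J+h_f)(A(s),\omega(s))$ is non-increasing, being a negative gradient flow line, and converges to $C^-$ as $s\to-\infty$ by Theorem~\ref{thm:mainexpconvergence}; hence $(\J+h_f)(A(s),\omega(s))\le C^-$ for all $s$, and since $h_f$ is bounded by a constant $C_f^{(1)}$ depending only on the perturbation, $\J(A(s),\omega(s))\le C^-+C_f^{(1)}$, giving $\int_I\J(A(s),\omega(s))\,ds\le|I|(C^-+C_f^{(1)})$. For the second integral I would split $\ast F_A-\omega=(\ast F_A-\omega+Y_f(A,\omega))-Y_f(A,\omega)$ and apply $\tfrac12\|a-b\|^2\le\|a\|^2+\|b\|^2$ to obtain
\[
\tfrac12\|\ast F_A(s)-\omega(s)\|_{L^2(\Sigma)}^2\le\|\ast F_A(s)-\omega(s)+Y_f(A(s),\omega(s))\|_{L^2(\Sigma)}^2+\|Y_f(A(s),\omega(s))\|_{L^2(\Sigma)}^2.
\]
The first term on the right is at most the full gradient norm $\|\nabla(\J+h_f)(A(s),\omega(s))\|_{L^2(\Sigma)}^2$, whose integral over $\R$ equals $C^--C^+$ by \eqref{eq:energyidentity}; the second is bounded by a constant $C_f^{(2)}$ depending only on $h_f$, by the standard uniform $L^2$-estimate for holonomy perturbations, cf.~\cite[Proposition~D.1~(iii)]{SalWeh}. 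Thus $\tfrac12\int_I\|\ast F_A(s)-\omega(s)\|_{L^2(\Sigma)}^2\,ds\le(C^--C^+)+|I|C_f^{(2)}$.

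Adding the two estimates yields $\tfrac12\|F_A\|_{L^2(I\times\Sigma)}^2\le(1+|I|)C^--C^++|I|C_f$ with $C_f:=C_f^{(1)}+C_f^{(2)}$, which is the assertion. I do not expect a serious obstacle; the estimate is essentially bookkeeping once the right identity is in place. The only points that need some care are the reduction to temporal gauge, the combined use of monotonicity and the exponential decay theorem to get the pointwise bound $(\J+h_f)(A(s),\omega(s))\le C^-$ (rather than merely a bound on its integral), and invoking the uniform $L^2$-bound on $Y_f$ so that the resulting constant genuinely depends only on $h_f$.
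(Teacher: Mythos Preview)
Your argument is correct and follows essentially the same route as the paper's proof: both rewrite $\tfrac12\|F_A\|^2$ via the completion-of-the-square identity $\tfrac12\|\ast F_A-\omega\|^2=\tfrac12\|F_A\|^2-\J(A,\omega)$, bound the $\|\ast F_A-\omega\|^2$ contribution by the energy identity \eqref{eq:energyidentity} after splitting off $Y_f$, and control the $\J$ contribution by the monotonicity $(\J+h_f)(A(s),\omega(s))\le C^-$. Your handling of the perturbation is in fact a touch more careful than the paper's (which writes $\J(A(s),\omega(s))\le C^-$ directly rather than separating out the uniformly bounded $h_f$), but the two proofs are otherwise identical in substance.
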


\begin{proof}
Since for all $(A,\omega)$, $\|\nabla(\J+h_f)(A,\omega)\|_{L^2(\Sigma)}^2\leq\|\omega-\ast F_A-Y_f\|_{L^2(\Sigma)}^2$ it follows from the energy identity \eqref{eq:energyidentity} that
\begin{eqnarray*}
C^--C^+&\geq&\int_I\|\omega-\ast F_A-Y_f\|_{L^2(\Sigma)}^2\,ds\\
&\geq&\frac{1}{2}\int_I\|\omega-\ast F_A\|_{L^2(\Sigma)}^2-\|Y_f\|_{L^2(\Sigma)}^2\,ds\\
&=&\frac{1}{2}\int_I\|F_A\|_{L^2(\Sigma)}^2-2\langle\omega,\ast F_A\rangle+\|\omega\|_{L^2(\Sigma)}^2-\|Y_f\|_{L^2(\Sigma)}^2\,ds\\
&=&\frac{1}{2}\int_I\|F_A\|_{L^2(\Sigma)}^2-\J(A,\omega)-\|Y_f\|_{L^2(\Sigma)}^2\,ds\\
&\geq&\frac{1}{2}\|F_A\|_{L^2(I\times\Sigma)}^2-|I|C^--|I|C_f.
\end{eqnarray*}
For the last estimate we used that $\J(A(s),\omega(s))\leq C^-$ for all $s\in I$ and further that $\|Y_f\|_{L^2(\Sigma)}^2\leq C_f$ for a constant $C_f$ which does not depend on $(A,\omega)$ (cf.~\cite[Proposition D.1 (iii)]{SalWeh}). The claim hence follows.
\end{proof}

An estimate for the $L^2$-norm of $\omega$ is obtained in the following lemma.

\begin{lem}\label{lem:L2estomega}
For $(A,\omega)$ as above and every $s\in\R$ there holds the estimate
\begin{eqnarray*}
\frac{1}{2}\|\omega(s)\|_{L^2(\Sigma)}\leq\|F_{A(s)}\|_{L^2(\Sigma)}+C_f.
\end{eqnarray*}
for a positive constant $C_f$ which only depends on the perturbation $h_f$.
\end{lem}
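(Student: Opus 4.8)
The plan is to combine a sharp ``completion of squares'' for the energy functional with the monotonicity of $\J+h_f$ along the flow. On the surface $\Sigma$ the Hodge star is an isometry with $\ast\ast=\id$ on $\Omega^0$ and $\Omega^2$, so $\langle F_A,\ast\omega\rangle_{L^2(\Sigma)}=\langle\ast F_A,\omega\rangle_{L^2(\Sigma)}$, and completing the square gives
\[
\J(A,\omega)=\langle\ast F_A,\omega\rangle_{L^2(\Sigma)}-\tfrac12\|\omega\|_{L^2(\Sigma)}^2=\tfrac12\|F_A\|_{L^2(\Sigma)}^2-\tfrac12\|\omega-\ast F_A\|_{L^2(\Sigma)}^2,
\]
the sharp form of Lemma~\ref{lem:increaseenergy}. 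Hence $\|\omega(s)-\ast F_{A(s)}\|_{L^2(\Sigma)}^2=\|F_{A(s)}\|_{L^2(\Sigma)}^2-2\J(A(s),\omega(s))$, and using the triangle inequality together with $\sqrt{a+b}\le\sqrt a+\sqrt b$ for $a,b\ge0$ one obtains
\[
\|\omega(s)\|_{L^2(\Sigma)}\le\|\omega(s)-\ast F_{A(s)}\|_{L^2(\Sigma)}+\|F_{A(s)}\|_{L^2(\Sigma)}\le2\|F_{A(s)}\|_{L^2(\Sigma)}+\bigl(2\max\{0,-\J(A(s),\omega(s))\}\bigr)^{1/2}.
\]
So the whole statement reduces to bounding $-\J(A(s),\omega(s))$ from above by a constant that depends only on $h_f$.

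For this I would first use that $(A,\omega)$ is a negative gradient flow line of the gauge-invariant functional $\J+h_f$, so $s\mapsto(\J+h_f)(A(s),\omega(s))$ is nonincreasing and tends to $C^+=(\J+h_f)(A^+,\omega^+)$ as $s\to+\infty$; hence $(\J+h_f)(A(s),\omega(s))\ge C^+$ for all $s$. Next, $h_f$ is bounded on $\tilde\A(P)$ by a constant $C_f$ depending only on $f$, being the integral over the unit disk of a fixed smooth function on the compact space $\D\times G^m$; likewise $\sup_{\AA}\|Y_f(\AA)\|_{L^2(\Sigma)}\le C_f$ after enlarging $C_f$, cf.~\cite[Proposition~D.1]{SalWeh}. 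Therefore $\J(A(s),\omega(s))\ge C^+-C_f$. Finally, the limit $(A^+,\omega^+)$ satisfies the perturbed critical point equation, so $\omega^+-\ast F_{A^+}=Y_f(\AA^+)$ (in particular $\omega^+=\ast F_{A^+}$ for the perturbations used to achieve nondegeneracy); by the identity above this gives $\J(A^+,\omega^+)=\tfrac12\|F_{A^+}\|_{L^2(\Sigma)}^2-\tfrac12\|Y_f(\AA^+)\|_{L^2(\Sigma)}^2\ge-\tfrac12C_f^2$, so $C^+\ge-\tfrac12C_f^2-C_f$. Putting these together yields $-\J(A(s),\omega(s))\le\tfrac12C_f^2+2C_f$ for every $s$, which depends only on $h_f$.

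Plugging this into the inequality of the first paragraph gives $\|\omega(s)\|_{L^2(\Sigma)}\le2\|F_{A(s)}\|_{L^2(\Sigma)}+(C_f^2+4C_f)^{1/2}$, and dividing by $2$ produces the asserted estimate with the constant $\tfrac12(C_f^2+4C_f)^{1/2}$, still depending only on $h_f$. I do not expect a genuine obstacle here: once the algebraic identity is in place the argument is elementary, and the only point requiring care is the bookkeeping ensuring that every auxiliary constant — the $C^0$-bound on $h_f$, the $L^2$-bound on $Y_f$, and the resulting lower bound on the critical value $C^+$ — is independent of the particular flow line, which is precisely what the standard holonomy-perturbation estimates of \cite[Appendix~D]{SalWeh} provide.
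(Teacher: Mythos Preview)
Your proof is correct and follows essentially the same route as the paper: both arguments combine the monotonicity $(\J+h_f)(A(s),\omega(s))\ge C^+$ along the flow, the uniform bound $|h_f|\le C_f$, and a lower bound on $C^+$ coming from the critical-point equation at $(A^+,\omega^+)$, and then convert the resulting lower bound on $\J(A(s),\omega(s))$ into the desired estimate via the quadratic dependence of $\J$ on $\omega$. The only cosmetic difference is in this last algebraic step: the paper applies Cauchy--Schwarz to $\langle F_A,\ast\omega\rangle$ and solves the quadratic inequality $\tfrac12\|\omega\|^2-\|F_A\|\,\|\omega\|-2C_f\le0$, whereas you use the exact identity $\J=\tfrac12\|F_A\|^2-\tfrac12\|\omega-\ast F_A\|^2$ together with the triangle inequality; these are equivalent manipulations yielding the same bound up to the name of the constant.
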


\begin{proof}
Note that $\omega^+=F_{A^+}$ because $(A^+,\omega^+)\in\crit(\J+h_f)$, and therefore 
\begin{eqnarray*}
C^+=(\J+h_f)(A^+,\omega^+)=\frac{1}{2}\int_{\Sigma}|F_{A^+}|^2\,\dvol_{\Sigma}+h_f(A^+)\geq h_f(A^+).
\end{eqnarray*}
It therefore follows from the gradient flow property and the Cauchy--Schwartz inequality that for all $s\in\R$
\begin{eqnarray*}
h_f(A^+)&\leq&(\J+h_f)(A(s),\omega(s))\\
&=&\int_{\Sigma}\langle F_{A(s)},\omega(s)\rangle-\frac{1}{2}|\omega(s)|^2\,\dvol_{\Sigma}+h_f(A(s),\omega(s))\\
&\leq&\|F_{A(s)}\|_{L^2(\Sigma)}\|\omega(s)\|_{L^2(\Sigma)}-\frac{1}{2}\|\omega(s)\|_{L^2(\Sigma)}^2+h_f(A(s),\omega(s)).
\end{eqnarray*}
By definition of $h_f$ there exists a constant $C_f$ such that $|h_f(A,\omega)|\leq C_f$ for all $(A,\omega)$. Thus we can further estimate
\begin{eqnarray*}
\frac{1}{2}\|\omega(s)\|_{L^2(\Sigma)}^2-\|F_{A(s)}\|_{L^2(\Sigma)}\|\omega(s)\|_{L^2(\Sigma)}-2C_f\leq0,
\end{eqnarray*}
which implies the result.
\end{proof}

\end{document}